\newtheorem{theorem}{Theorem}[section]
\newtheorem{lemma}[theorem]{Lemma}
\newtheorem{proposition}[theorem]{Proposition}
\newtheorem{corollary}[theorem]{Corollary}
\theoremstyle{definition}
\newtheorem{definition}[theorem]{Definition}
\newcommand{\defn}[1]{{\em #1}}
\theoremstyle{remark}
\newtheorem{remark}[theorem]{Remark}
\title{Linked systems of symmetric group divisible designs} 
\date{
\today
}
\author{
 Hadi Kharaghani\thanks{Department of Mathematics and Computer Science, University of Lethbridge,
Lethbridge, Alberta, T1K 3M4, Canada. \texttt{kharaghani@uleth.ca}} 
\and  
 Sho Suda\thanks{Department of Mathematics Education,  Aichi University of Education, 1 Hirosawa, Igaya-cho,  Kariya, Aichi, 448-8542, Japan. \texttt{suda@auecc.aichi-edu.ac.jp}}
}
\begin{document}
\maketitle
\begin{abstract}
We introduce the concept of linked systems of symmetric group divisible designs. 
The connection with association schemes is established, and as a consequence we obtain an upper bound on the number of symmetric group divisible designs which are linked. 
Several examples of linked systems of symmetric group divisible designs are provided. 
\end{abstract}

\section{Introduction}
Let $V$ be a finite set of size $v$ whose elements are called points and $\mathcal{B}$ a set of $k$ subsets of $V$ whose elements are called blocks.
A symmetric design is a pair $(V,\mathcal{B})$ such that any two distinct points of $V$ are contained in exactly $\lambda$ blocks and  any two distinct blocks in $\mathcal{B}$ contain  exactly $\lambda$ points in common.  
A symmetric group divisible design  (a generalization of symmetric design) is a pair $(V,\mathcal{B})$ in which the point set $V$ is partitioned into $m$ classes of size $n$ satisfying some regularity conditions similar to those of  symmetric designs, as we will define in Section~\ref{sec:gdd} below. 
We represent these with an incidence structure $(\Omega_1,\Omega_2,I_{1,2})$ such that $I_{1,2}$ is a subset of $\Omega_1\times \Omega_2$. 

In this paper we deal with a collection of incidence structures $(\Omega_i,\Omega_j,I_{i,j})$ ($i,j\in\{1,\ldots,f\}$, $i\neq j$, $f\geq 2$), 
each satisfying the same symmetric group divisible design condition.
  
Linked systems of symmetric designs were introduced by Cameron \cite{C} from the combinatorial point of view of doubly transitive permutation groups having inequivalent representations, and studied in connection with association schemes by Mathon \cite{M} and Van Dam \cite{D}. 
The association schemes obtained from linked systems of symmetric designs are imprimitive $3$-class $Q$-polynomial association schemes which are $Q$-antipodal, and vice versa. 
It was shown in \cite{MMW} that linked systems of symmetric designs with certain parameters have the extended $Q$-bipartite double yielding $4$-class $Q$-polynomial association schemes which are both $Q$-antipodal and $Q$-bipartite \cite[Section~3]{MMW}.  
Furthermore, it was shown in \cite{LMO} that the existence of imprimitive $4$-class $Q$-polynomial association schemes  which are both $Q$-antipodal and $Q$-bipartite is equivalent to the existence of real mutually unbiased bases. 
Higman \cite{H} studied imprimitive $4$-class association schemes, where he used the term {\it uniformly linked strongly regular designs}. 
Very recently in \cite{DMM}, Van Dam, Martin and Muzychuk have studied the uniformity of association schemes and coherent configurations, including linked systems of symmetric designs, real mutually unbiased bases and linked strongly designs. 
Thus there are many interesting relations between design theory and imprimitive association schemes with a  small number of classes in literature. 

In this paper we introduce the concept of \emph{linked systems of symmetric group divisible designs}. 
We study the theory of linked systems of symmetric group divisible designs, provide feasible conditions on parameters, establish a connection with association schemes, find upper bounds on the number $f$ of symmetric group divisible designs which can be linked, and provide several examples of linked systems of symmetric group divisible designs. 
The association schemes appearing here are uniform, imprimitive $4$-class association schemes whose fibers are imprimitive strongly regular graphs, thus yet another connection between design theory and association schemes is established. 

The following is the organization of this paper. 
In Section~\ref{sec:prelim}, we introduce the notions used later of group divisible designs, association schemes, mutually UFS Latin squares, and generalized Hadamard matrices. 
In Section~\ref{sec:lgdd}, we discuss the concept of linked systems of symmetric group divisible designs, and establish 
some basic underlying theory. 
In Section~\ref{sec:lgddas}, we show that an imprimitive $4$-class association schemes whose fibers are imprimitive strongly regular graphs  is obtained from a linked system of symmetric group divisible designs,  and vice versa. 
As an important application of the Krein condition on symmetric association schemes, we derive upper bounds on the number $f$ of symmetric group divisible designs which are linked. 
Finally in Section~\ref{sec:elgdd}, we provide various examples of linked systems of symmetric group divisible designs by using generalized Hadamard matrices and mutually UFS Latin squares.

\section{Preliminaries}\label{sec:prelim}
Throughout the paper, $I_n,J_n,O_n$ denote the identity matrix of order $n$, the all-ones matrix of order $n$, and the zero matrix of order $n$, respectively.  
\subsection{Group divisible designs}\label{sec:gdd}
Let $m,n\geq 2$ be integers. 
A (\emph{square}) \emph{group divisible design with parameters $(v,k,m,n,\lambda_1,\lambda_2)$} is a pair $(V,\mathcal{B})$, where $V$ is a finite set of  $v$ elements called points, and $\mathcal{B}$ a collection of $k$-element subsets of $V$ called blocks with $|\mathcal{B}|=v$, in which the point set $V$ is partitioned into $m$ classes of size $n$, such that two distinct points from one class occur
together in $\lambda_1$ blocks, and two points from different classes occur together in exactly $\lambda_2$ blocks. 
A group divisible design is said to be \emph{symmetric} (or to have the \emph{dual property}) if its dual, that is 
the structure gotten by interchanging the roles of points and blocks, is again a group divisible design with the same parameters. 
A group divisible design is said to be \defn{proper} if $\lambda_1\neq \lambda_2$ 
and \defn{improper} if $\lambda_1=\lambda_2$. 
In the improper case,  
we set $\lambda=\lambda_1=\lambda_2$. 
Improper symmetric group divisible designs are known as \defn{symmetric $2$-$(v,k,\lambda)$ designs}. 
Throughout the paper, we assume that $0<k<v$ in order to avoid the trivial case, and that a symmetric group divisible design always is proper unless otherwise stated. 

A group divisible design $(V,\mathcal{B})$ with parameters $(v,k,m,n,\lambda_1,\lambda_2)$ is also described as an incidence structure as follows. 
Let $\Omega,\Omega'$ be finite sets with the same number of elements and $I$  a subset of $\Omega\times \Omega'$.  
An incidence structure $(\Omega,\Omega',I)$ is a \defn{group divisible design} if the following are satisfied:
\begin{enumerate}
\item $|\{y\in\Omega'\mid (x,y)\in I\}|=k$ for any $x\in \Omega$, 
\item there exists a partition $\{\Omega_1,\ldots,\Omega_m\}$ of $\Omega$ such that $|\Omega_i|=n$ for each $i$ and for distinct $x,x'\in\Omega$, 
\begin{align*}
|\{y\in\Omega'\mid (x,y),(x',y)\in I\}|=
\begin{cases}
\lambda_1 & \text{ if } x,x'\in \Omega_j \text{ for some $j$},\\
\lambda_2 & \text{ if } x\in \Omega_j, x'\in \Omega_{j'} \text{ for some $j\neq j'$}.
\end{cases}
\end{align*}
\end{enumerate}

The \defn{incidence matrix} of an incidence structure $(\Omega,\Omega',I)$ is a $v\times v$ $(0,1)$-matrix $A$ with rows and columns indexed by $\Omega',\Omega$ respectively such that for $x\in\Omega,y\in\Omega'$, 
\begin{align*}
A_{y,x}=\begin{cases}
1 & \text{ if } (x,y)\in I,\\
0 & \text{ if } (x,y)\not\in I.
\end{cases}
\end{align*}
Let $A$ be the incidence matrix of a symmetric group divisible design with parameters $(v,k,m,n,\lambda_1,\lambda_2)$. 
Then, after reordering the elements of $\Omega$ and $\Omega'$ appropriately,  
\begin{align}\label{eq:gdd}
A A^\top=A^\top A=k I_v+\lambda_1(I_m\otimes J_n-I_v)+\lambda_2(J_v-I_m\otimes J_n),  
\end{align}
where $A^\top$ is the transpose of $A$. 
Then it follows that $A J_v=J_v A =k J_v$, and 
\begin{align}\label{eq:p1}
k^2=k+\lambda_1(n-1)+\lambda_2(v-n).
\end{align}  
Moreover a $v\times v$ $(0,1)$-matrix $A$ is the incidence matrix of a symmetric group divisible design with parameters $(v,k,m,n,\lambda_1,\lambda_2)$ if and only if \eqref{eq:gdd} holds. 

The following result is due to Bose, which imposes constraints on block matrices of symmetric group divisible designs. 
The original proof is quite long, so we give an alternative proof.  
\begin{lemma}\label{lem:qutient}\upshape{\cite[Theorem~2.1]{B}}
Let $A$ be the incidence matrix of a symmetric group divisible design with parameters $(v,k,m,n,\lambda_1,\lambda_2)$. 
Then 
\begin{align*}
(\lambda_1-\lambda_2)A(I_m\otimes J_n)A^\top=(\lambda_1-\lambda_2)((n(\lambda_1-\lambda_2)+k-\lambda_1)I_m\otimes J_n+n\lambda_2 J_v).
\end{align*} 
\end{lemma}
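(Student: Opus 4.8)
Write $M=I_m\otimes J_n$ and $N=AA^\top$. The plan is to reduce the statement to a computation inside the three-dimensional commutative algebra spanned by $I_v$, $M$, and $J_v$, where multiplication is governed by the rules $M^2=nM$, $MJ_v=J_vM=nJ_v$, and $J_v^2=vJ_v$. First I would rewrite the defining relation \eqref{eq:gdd} in the convenient form $N=(k-\lambda_1)I_v+(\lambda_1-\lambda_2)M+\lambda_2J_v$, which isolates $(\lambda_1-\lambda_2)M=N-(k-\lambda_1)I_v-\lambda_2J_v$. Applying the map $X\mapsto AXA^\top$ to this identity turns the left-hand side into exactly the quantity $(\lambda_1-\lambda_2)A(I_m\otimes J_n)A^\top$ appearing in the lemma, so it remains only to evaluate $AXA^\top$ on each of $N$, $I_v$, and $J_v$.

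The crucial observation is that $ANA^\top=N^2$. Indeed, since $N=A^\top A=AA^\top$, one has $ANA^\top=A(A^\top A)A^\top=(AA^\top)(AA^\top)=N^2$; this is where the dual property (equivalently, the normality $AA^\top=A^\top A$) enters in an essential way. The remaining two evaluations are immediate: $AI_vA^\top=AA^\top=N$, and, since $A\mathbf{1}=k\mathbf{1}$ follows from $AJ_v=kJ_v$, we get $AJ_vA^\top=A\mathbf{1}\mathbf{1}^\top A^\top=k^2J_v$. Combining these yields
\begin{align*}
(\lambda_1-\lambda_2)A(I_m\otimes J_n)A^\top=N^2-(k-\lambda_1)N-\lambda_2k^2J_v.
\end{align*}

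It then remains to expand $N^2$ using the multiplication rules above and to collect terms according to the basis $I_v,M,J_v$. I expect the coefficient of $I_v$ to cancel and the coefficient of $M$ to reduce directly to $(\lambda_1-\lambda_2)\bigl(n(\lambda_1-\lambda_2)+k-\lambda_1\bigr)$, as claimed. The only substantive step is the coefficient of $J_v$: after simplification it carries a factor $v\lambda_2+(k-\lambda_1)+n(\lambda_1-\lambda_2)-k^2$, which vanishes precisely because of the parameter relation \eqref{eq:p1}, namely $k^2=k+\lambda_1(n-1)+\lambda_2(v-n)$; what survives is $(\lambda_1-\lambda_2)n\lambda_2J_v$, matching the right-hand side of the lemma.

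The main obstacle is recognizing the identity $ANA^\top=N^2$: once this is available the argument collapses to bookkeeping in a $3$-dimensional commutative matrix algebra, with the single parameter equation \eqref{eq:p1} doing exactly the work needed to pin down the coefficient of $J_v$. In particular, no appeal to the lengthy direct counting of Bose's original argument is required, which is the point of giving this alternative proof.
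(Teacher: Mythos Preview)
Your argument is correct and is essentially the paper's own proof, just reorganized: the paper computes $(AA^\top)(AA^\top)=A(A^\top A)A^\top$ in two ways, which amounts exactly to your identity $ANA^\top=N^2$ followed by expanding $A\bigl((k-\lambda_1)I_v+(\lambda_1-\lambda_2)M+\lambda_2 J_v\bigr)A^\top$ and solving for $(\lambda_1-\lambda_2)AMA^\top$. Both routes land on $(\lambda_1-\lambda_2)A(I_m\otimes J_n)A^\top=N^2-(k-\lambda_1)N-\lambda_2 k^2 J_v$ and finish with the same bookkeeping and the same appeal to \eqref{eq:p1}.
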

\begin{proof}
Calculate $(AA^\top)(AA^\top)=A(A^\top A)A^\top$ in two ways as follows. 
On the one hand, 
\begin{align*}
(AA^\top)(AA^\top)&=((k-\lambda_1) I_v+(\lambda_1-\lambda_2)I_m\otimes J_n+\lambda_2 J_v)^2\\
&=(k-\lambda_1)^2 I_v+(\lambda_1-\lambda_2)(2(k-\lambda_1)+n(\lambda_1-\lambda_2))I_m\otimes J_n\\
&\qquad +(2(k-\lambda_1)\lambda_2+2n(\lambda_1-\lambda_2)\lambda_2+v \lambda_2^2)J_v, 
\end{align*}
and on the other hand 
\begin{align*}
A(A^\top A)A^\top&=A((k-\lambda_1) I_v+(\lambda_1-\lambda_2)I_m\otimes J_n+\lambda_2 J_v)A^\top\\
&=(k-\lambda_1) A A^\top+(\lambda_1-\lambda_2)A(I_m\otimes J_n)A^\top+\lambda_2 A J_v A^\top\\
&=(\lambda_1-\lambda_2)A(I_m\otimes J_n)A^\top+(k-\lambda_1)^2 I_v\\
&\qquad+(k-\lambda_1)(\lambda_1-\lambda_2)I_m\otimes J_n+(k^2+k-\lambda_1)\lambda_2 J_v.
\end{align*}
The above equations with $v=m n$ and (2) yield the desired result.  
\end{proof}

Letting $A'=J_v-A$, we have
\begin{align*}
A' A'^\top=A'^\top A'=(v-k) I_v+(v-2k+\lambda_1)(I_m\otimes J_n-I_v)+(v-2k+\lambda_2)(J_v-I_m\otimes J_n). 
\end{align*}
This implies that $A'$, the \defn {complement} of $A$, is also a symmetric group divisible design with $(v,v-k,m,n,v-2k+\lambda_1,v-2k+\lambda_2)$. 

\subsection{Association schemes}
A \emph{$d$-class symmetric association scheme}, see \cite{BI},
with a finite vertex set $X$,  
is a set of non-zero symmetric $(0,1)$-matrices $A_0, A_1,\ldots, A_d$ with
rows and columns indexed by $X$, such that
\begin{enumerate}
\item $A_0=I_{|X|}$,
\item $\sum_{i=0}^d A_i = J_{|X|}$,
\item For all $i$, $j$, $A_iA_j=\sum_{k=0}^d p_{i,j}^k A_k$ for some non-negative integers $p_{i,j}^k$.
\end{enumerate}
The \defn{intersection matrix} $B_i$ is defined to be $B_i=(p_{i,j}^k)_{j,k=0}^{d}$. 
Since each $A_i$ is symmetric, it follows from the condition (iii) that the $A_i$ necessarily commute.
The vector space spanned by $A_i$'s over the real number field forms a commutative algebra, denoted by $\mathcal{A}$ and is called the \emph{Bose-Mesner algebra} or \emph{adjacency algebra}.
Then there exists a basis of $\mathcal{A}$ consisting of primitive idempotents, say $E_0=(1/|X|)J_{|X|},E_1,\ldots,E_d$. 
Since  $\{A_0,A_1,\ldots,A_d\}$ and $\{E_0,E_1,\ldots,E_d\}$ are two bases of $\mathcal{A}$, there exist the change-of-basis matrices $P=(P_{i,j})_{i,j=0}^d$, $Q=(Q_{i,j})_{i,j=0}^d$ so that
\begin{align*}
A_i=\sum_{j=0}^d P_{j,i}E_j,\quad E_j =\frac{1}{|X|}\sum_{i=0}^{d} Q_{i,j}A_i.
\end{align*}   
The matrices $P,Q$ are said to be the \defn{first and second eigenmatrices} respectively.  
Since $(0,1)$-matrices $A_i$, $0\leq i\leq d$, have disjoint support, the algebra $\mathcal{A}$ they form is closed under the entrywise multiplication denoted by $\circ$. 
The \emph{Krein parameters} $q_{i,j}^k$ are defined by $E_i\circ E_j=\frac{1}{|X|}\sum_{k=0}^dq_{i,j}^k E_k$. 
The \emph{Krein matrix} $B_i^*$ is defined as $B_i^*=(q_{i,j}^k)_{j,k=0}^d$. 
The rank of $E_i$ is denoted by $m_i$ and is called the \emph{$i^{\rm th}$ multiplicity}.
The multiplicity $m_i$ is equal to $Q_{0,i}$. 
The association scheme is \emph{$Q$-polynomial} if there exists an ordering $E_0,E_1,\ldots,E_d$ of primitive idempotents such that the Krein matrix $B_1^*$ is a tridiagonal matrix with non-zero subdiagonal and  superdiagonal entries.  
Set $a_i^*=q_{1,i}^i$ 
$(0\leq i\leq d)$, $b_i^*=q_{1,i+1}^i (0\leq i\leq d-1)$, 
and $c_i^*=q_{1,i-1}^i, (1\leq i\leq d)$ for notational convenience. 
The $Q$-polynomial scheme is \emph{$Q$-antipodal} if $b_i^*=c_{d-i}^*$ for all  possible $i$, except possibly $i=\lfloor d/2 \rfloor$, and it is \emph{$Q$-bipartite} if $a_i^*=0$ for all $i$.   

The following provides useful inequalities on multiplicities of association schemes. 
\begin{proposition}\label{prop:krein}\upshape{\cite[Theorems~3.8 and Theorem~4.8]{BI}}
For any $i,j,k\in\{0,1,\ldots,d\}$, the following hold.
\begin{enumerate}
\item $q_{i,j}^k\geq0$. 
\item 
${\displaystyle \sum_{0\leq l\leq d,q_{i,j}^l>0}m_l}\leq 
\begin{cases}
\frac{m_i(m_i+1)}{2} & \text{ if } i=j, \\
m_i m_j & \text{ if } i\neq j.
\end{cases}
$
\end{enumerate}
\end{proposition}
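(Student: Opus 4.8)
The plan is to deduce both parts from a single structural observation: each primitive idempotent $E_l$ is a real symmetric idempotent, hence the orthogonal projection onto its column space $V_l=\mathrm{Im}(E_l)$ with $\dim V_l=m_l$, and these projections are mutually orthogonal with $E_lE_{l'}=\delta_{l,l'}E_l$ and $\sum_l E_l=I_{|X|}$. For part (i), I would invoke the Schur product theorem: since each $E_l$ is positive semidefinite, so is the entrywise product $E_i\circ E_j$. On the other hand, by definition $E_i\circ E_j=\frac{1}{|X|}\sum_l q_{i,j}^l E_l$, and because the $E_l$ are mutually orthogonal projections summing to the identity, the eigenvalues of this matrix are exactly the scalars $\frac{1}{|X|}q_{i,j}^l$, each occurring on the $m_l$-dimensional subspace $V_l$. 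Positive semidefiniteness forces every eigenvalue to be nonnegative, which is precisely $q_{i,j}^k\geq 0$.

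For part (ii), I would first translate the left-hand side into a rank. Using (i), the coefficients $\frac{1}{|X|}q_{i,j}^l$ in $E_i\circ E_j=\frac{1}{|X|}\sum_l q_{i,j}^l E_l$ are all nonnegative, so this is a nonnegative combination of mutually orthogonal projections; its rank is therefore the sum of the ranks $m_l$ of exactly those terms whose coefficient is nonzero, i.e.\ strictly positive. Hence $\sum_{l:\,q_{i,j}^l>0}m_l=\mathrm{rank}(E_i\circ E_j)$, and the problem reduces to bounding the rank of the Hadamard product.

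To bound that rank I would expand each idempotent in an orthonormal basis of its eigenspace, writing $E_i=\sum_{a=1}^{m_i}u_au_a^\top$ and $E_j=\sum_{b=1}^{m_j}v_bv_b^\top$ with $\{u_a\}$ and $\{v_b\}$ orthonormal bases of $V_i$ and $V_j$. Reading off entries gives $(E_i\circ E_j)_{x,y}=\sum_{a,b}(u_a\circ v_b)(x)\,(u_a\circ v_b)(y)$, so that $E_i\circ E_j=\sum_{a,b}w_{a,b}w_{a,b}^\top$ where $w_{a,b}=u_a\circ v_b$. Consequently the column space of $E_i\circ E_j$ is spanned by the vectors $w_{a,b}$, of which there are at most $m_im_j$; this yields the bound when $i\neq j$. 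When $i=j$ the same computation applies with $v_b=u_b$, and now $w_{a,b}=u_a\circ u_b=w_{b,a}$ is symmetric in its two indices, so the spanning family has at most $\binom{m_i+1}{2}=\frac{m_i(m_i+1)}{2}$ distinct members, giving the sharper inequality.

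The genuinely load-bearing points, and the places where care is needed, are twofold. First, the identification $\sum_{l:\,q_{i,j}^l>0}m_l=\mathrm{rank}(E_i\circ E_j)$ rests on the $E_l$ being mutually orthogonal projections, so that the rank of a nonnegative combination splits cleanly as the sum of the individual ranks; this is where part (i) is used, to guarantee that ``nonzero coefficient'' coincides with ``positive coefficient.'' Second, for the $i=j$ refinement one must verify that the symmetry $w_{a,b}=w_{b,a}$ really collapses the spanning set from $m_i^2$ to $\binom{m_i+1}{2}$ vectors; the explicit rank-one decomposition $E_i\circ E_j=\sum_{a,b}w_{a,b}w_{a,b}^\top$ is what makes both the rank bound and this symmetry reduction transparent, avoiding any subtlety about passing from a Kronecker-product picture to its diagonal entries.
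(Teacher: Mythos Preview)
The paper does not supply its own proof of this proposition; it is quoted from \cite[Theorems~3.8 and~4.8]{BI} without argument. Your proof is correct and is essentially the standard one given in that reference: part~(i) via the Schur product theorem applied to the positive semidefinite idempotents $E_i,E_j$, and part~(ii) by identifying the left-hand side with $\mathrm{rank}(E_i\circ E_j)$ and bounding the latter through the rank-one expansion $E_i\circ E_j=\sum_{a,b}(u_a\circ v_b)(u_a\circ v_b)^\top$, with the symmetry $u_a\circ u_b=u_b\circ u_a$ yielding the sharper bound $\binom{m_i+1}{2}$ when $i=j$.
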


Each $A_i$
can be considered as the adjacency matrix of some undirected simple graph. 
The scheme is \emph{imprimitive} if,
on viewing the $A_i$ as adjacency matrices
of graphs $G_i$ on vertex set $X$, at least one of the $G_i$, $i \ne 0$, is disconnected. 
In this case, there exists a set $\mathcal{I}$ of indices such that $0$ and such $i$ are elements of $\mathcal{I}$ and $\sum_{j\in\mathcal{I}}A_j=I_p\otimes J_q$ for some $p,q$ with $p>1$. 
Thus the set $X$ is partitioned into $p$ subsets called \emph{fibers}, each of which has size $q$. 
The set $\mathcal{I}$ defines an equivalence relation on $\{0,1,\ldots,d\}$ by $j\sim k$ if and only if $p_{i,j}^k\neq 0$ for some $i\in \mathcal{I}$.  
Let $\mathcal{I}_0=\mathcal{I},\mathcal{I}_1,\ldots,\mathcal{I}_t$ be the equivalence classes on $\{0,1,\ldots,d\}$ by $\sim$. 
Then by \cite[Theorem~9.4]{BI} there exist $(0,1)$-matrices $\overline{A}_j$ ($0\leq j\leq t$) such that 
\begin{align*}
\sum_{i\in \mathcal{I}_j}A_i=\overline{A}_j\otimes J_q,
\end{align*}
and the matrices $\overline{A}_j$ ($0\leq j\leq t$) define an association scheme on the set of fibers. 
This is called the \emph{quotient association scheme} with respect to $\mathcal{I}$.

For fibers $U$ and $V$, let $\mathcal{I}(U,V)$ denote the set of indices $i$ of adjacency matrices $A_i$ such that 
an entry of $A_i$ of a row indexed by $U$ and a column indexed by $V$ is one. 
For $i\in \mathcal{I}(U,V)$, we define a $(0,1)$-matrix $A_i^{UV}$ by 
\begin{align*}
(A_i^{UV})_{xy}=\begin{cases}
1 &\text{ if } (A_i)_{xy}=1, x\in U, y\in V,\\
0 &\text{ otherwise}.
\end{cases}
\end{align*}
An imprimitive association scheme is called \defn{uniform} if its quotient association scheme is of class 1 and there exist non-negative integers $a_{i,j}^k$ such that for  all fibers $U,V,W$ and $i\in\mathcal{I}(U,V),j\in\mathcal{I}(V,W)$, we have 
\begin{align*}
A_i^{UV}A_i^{VW}=\sum_{k}a_{i,j}^k A_k^{UW}.
\end{align*}

\subsection{UFS Latin squares }
Two Latin squares $L_1$ and $L_2$ of size $n$ on the symbol set $\{1,\ldots,n\}$ are called to be 
{\it UFS Latin squares},  if every superimposition of each row of $L_1$ on each row of $L_2$ results in
only one element of the form $(a,a)$. In effect,  each permutation of symbols between the rows of the two Latin squares has a Unique Fixed Symbol.
A set of Latin squares in which every distinct pair of Latin squares are UFS Latin square
is called
\emph{mutually UFS Latin squares}.
Note that UFS Latin squares are called suitable Latin squares in \cite{HKO} and elsewhere.

The following lemma shows that the existence of UFS Latin squares is equivalent to that of orthogonal Latin squares.
\begin{lemma}{\upshape \cite[Lemma~9]{HKO}}
There exist mutually UFS Latin squares of size $n$ if and only if there exist mutually orthogonal Latin squares of size $n$. 
\end{lemma}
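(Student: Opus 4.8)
The plan is to exhibit an explicit, size-preserving bijection on the set of Latin squares of order $n$ that interchanges the UFS relation with orthogonality; the lemma then follows at once, and in fact with the number of squares preserved.

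Given a Latin square $L$ of order $n$ with entry $L(r,c)$ in row $r$ and column $c$, I would define $\widehat{L}$ by declaring $\widehat{L}(c,s)$ to be the unique row $r$ for which $L(r,c)=s$. Thus the rows of $\widehat{L}$ are indexed by the columns of $L$, its columns by the symbols of $L$, and its entries record the rows of $L$; concretely $\widehat{L}$ is the conjugate of $L$ obtained by cyclically permuting the three roles of rows, columns, and symbols. The first step is to record the two routine facts that $\widehat{L}$ is again a Latin square of order $n$ (since any two of the three coordinates of a triple $(r,c,s)$ with $s=L(r,c)$ determine the third, a property preserved under permuting the coordinates), and that $L\mapsto\widehat{L}$ is a bijection on the set of Latin squares of order $n$, its inverse being the inverse conjugation.

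The heart of the argument is the pairwise equivalence: for Latin squares $L_1,L_2$ of order $n$, the pair $(L_1,L_2)$ is UFS if and only if $(\widehat{L_1},\widehat{L_2})$ is orthogonal. To prove this I would first rewrite orthogonality of $\widehat{L_1}$ and $\widehat{L_2}$ as the statement that the map sending a cell $(c,s)$ to the value pair $(\widehat{L_1}(c,s),\widehat{L_2}(c,s))$ is a bijection from cells onto $\{1,\dots,n\}^2$. Fixing a target pair $(r_1,r_2)$, a cell $(c,s)$ maps to it exactly when $L_1(r_1,c)=s$ and $L_2(r_2,c)=s$, that is, exactly when $c$ is a column at which row $r_1$ of $L_1$ and row $r_2$ of $L_2$ agree (the symbol $s$ then being forced). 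Hence the number of preimages of $(r_1,r_2)$ equals the number of columns in which these two rows agree, and so the cell-to-value map is a bijection precisely when every row of $L_1$ and every row of $L_2$ agree in exactly one column, which is exactly the UFS condition.

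Finally, since $L\mapsto\widehat{L}$ is a bijection preserving distinctness, a family $\{L_1,\dots,L_f\}$ is mutually UFS if and only if $\{\widehat{L_1},\dots,\widehat{L_f}\}$ is a family of mutually orthogonal Latin squares, which yields the existence equivalence (indeed with the same number $f$ of squares). The only genuine care needed is the bookkeeping of the three roles row/column/symbol under the conjugation and the verification that $\widehat{L}$ is a Latin square; I expect the one nontrivial idea to be the reformulation of orthogonality as ``the cell-to-value map is a bijection,'' after which the counting that matches it to UFS is immediate.
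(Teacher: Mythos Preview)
Your proposal is correct. The paper does not actually supply a proof of this lemma; it merely cites \cite[Lemma~9]{HKO} and moves on. So there is nothing in the paper to compare your argument against.

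On its merits, your argument via the conjugate (parastrophe) $\widehat{L}(c,s)=r \Leftrightarrow L(r,c)=s$ is clean and complete: the key computation that a cell $(c,s)$ satisfies $(\widehat{L_1}(c,s),\widehat{L_2}(c,s))=(r_1,r_2)$ if and only if $L_1(r_1,c)=s=L_2(r_2,c)$ directly identifies the preimage count of $(r_1,r_2)$ under the superposition map with the number of common-symbol columns between row $r_1$ of $L_1$ and row $r_2$ of $L_2$, so orthogonality of the conjugates is literally the UFS condition. The bijectivity of $L\mapsto\widehat{L}$ then transfers the equivalence to families, preserving the number $f$ of squares as you note.
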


The following two lemmas provide a new Latin square from a given pair of UFS Latin squares and establish a connection among resulting Latin squares. 
We omit the easy proof for the first lemma. 
\begin{lemma}\label{lem:sl}
Let  $L_1,L_2$ be UFS Latin squares on the symbol set $\{1,\ldots,n\}$ with the $(i,j)$-entry equal to $l(i,j),l'(i,j)$ respectively.  
An $n\times n$ array with the $(i,j)$-entry equal to $b$ determined by $b=l(i,a)=l'(j,a)$ for the unique $a\in\{1,\ldots,n\}$, is a Latin square.
\end{lemma}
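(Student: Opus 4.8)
The plan is to reinterpret the rows of $L_1$ and $L_2$ as permutations of the symbol set and then to deduce the Latin-square property of the new array directly from the bijectivity of the rows and columns of $L_1$ and $L_2$.

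First I would fix notation by writing $\sigma_i$ for the permutation $c\mapsto l(i,c)$ given by the $i$-th row of $L_1$, and $\tau_j$ for the permutation $c\mapsto l'(j,c)$ given by the $j$-th row of $L_2$; both are bijections of $\{1,\ldots,n\}$ because $L_1$ and $L_2$ are Latin squares. In this language the UFS hypothesis says exactly that for every pair $(i,j)$ there is a unique column $a$ with $\sigma_i(a)=\tau_j(a)$ (equivalently, $\tau_j^{-1}\sigma_i$ has a unique fixed point). This is precisely what makes the entry $M(i,j):=b=\sigma_i(a)=\tau_j(a)$ well defined, so the first step is simply to record that the array $M$ is unambiguously determined.

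Next I would verify that no symbol repeats in a row of $M$. Fixing $i$ and supposing $M(i,j)=M(i,j')=b$, I take the associated unique columns $a,a'$; then $\sigma_i(a)=b=\sigma_i(a')$ forces $a=a'$ since $\sigma_i$ is a bijection, after which $\tau_j(a)=b=\tau_{j'}(a)$ together with the bijectivity of column $a$ of $L_2$ (the map $j\mapsto l'(j,a)$) forces $j=j'$. Thus $j\mapsto M(i,j)$ is an injective self-map of an $n$-element set, hence a bijection. The argument for columns is entirely symmetric: fixing $j$ and supposing $M(i,j)=M(i',j)$, one uses the bijectivity of $\tau_j$ to identify the two distinguished columns and then the bijectivity of the corresponding column of $L_1$ to conclude $i=i'$. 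Together these say that $M$ is a Latin square.

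I do not expect a genuine obstacle here; the only care needed is the bookkeeping of which bijectivity is invoked where—row-bijectivity of $L_1$ (resp.\ $L_2$) identifies the distinguished columns, whereas column-bijectivity of $L_2$ (resp.\ $L_1$) separates the indices. It is also worth noting, as a sanity check, that the UFS property enters only in guaranteeing that $M$ is well defined, while the Latin-square conclusion rests solely on the fact that the rows and columns of $L_1$ and $L_2$ are permutations.
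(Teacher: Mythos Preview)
Your proof is correct: reinterpreting rows as permutations, using the UFS hypothesis only to make $M$ well defined, and then invoking row-bijectivity of one square followed by column-bijectivity of the other to establish injectivity in each row and column is exactly the right bookkeeping. The paper itself omits the proof entirely, declaring it easy, so there is nothing to compare against; your argument is a clean way to fill in what the authors left to the reader.
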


\begin{lemma}\label{lem:sl1}
Let $L_1,L_2,L_3$ be any mutually UFS Latin squares on the symbol set $\{1,\ldots,n\}$, and $L_{i,j}$ {\upshape(}$i,j\in\{1,2,3\},i\neq j${\upshape)} the Latin square obtained from $L_i$ and $L_j$ in this ordering by Lemma~\ref{lem:sl}. 
Then 
$L_{1,3}$ and $L_{2,3}$ are UFS, and the Latin square obtained from $L_{1,3}$ and $L_{2,3}$ in this ordering equals to $L_{1,2}$. 
\end{lemma}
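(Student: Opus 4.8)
The plan is to reformulate the construction of Lemma~\ref{lem:sl} in a form symmetric enough to iterate. Writing $L_k(r,c)$ for the $(r,c)$-entry of $L_k$, the defining property becomes: for UFS squares $L_i,L_j$, the entry $L_{i,j}(r,s)$ is the unique symbol $b$ for which there is a column $a$ with $L_i(r,a)=L_j(s,a)=b$. Here the UFS hypothesis is exactly what guarantees that such a column $a$ exists and is unique, hence that $b$ is well defined. I would record this characterization first, since it lets me treat $L_{1,3}$, $L_{2,3}$ and $L_{1,2}$ on an equal footing.

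Next I would fix rows $r$ and $s$ and study the equation $L_{1,3}(r,c)=L_{2,3}(s,c)$ as the column $c$ varies. By the characterization, $L_{1,3}(r,c)=b$ means $L_1(r,a)=L_3(c,a)=b$ for some unique column $a$, while $L_{2,3}(s,c)=b'$ means $L_2(s,a')=L_3(c,a')=b'$ for some unique column $a'$. The first key step is to observe that if $b=b'$ then $L_3(c,a)=L_3(c,a')$, so $a=a'$ because the symbols in a single row of the Latin square $L_3$ are distinct. Thus a coincidence $L_{1,3}(r,c)=L_{2,3}(s,c)$ forces a single column $a$ at which $L_1(r,a)=L_2(s,a)=L_3(c,a)$.

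At this point the UFS hypothesis on the pair $L_1,L_2$ takes over: there is exactly one column $a_0$ with $L_1(r,a_0)=L_2(s,a_0)$, and this common value is by definition $b_0:=L_{1,2}(r,s)$. The remaining freedom is in $c$, but the equation $L_3(c,a_0)=b_0$ pins $c$ down uniquely, since each symbol occurs exactly once in column $a_0$ of the Latin square $L_3$; call this row $c_0$. Conversely I would check that $c_0$ really does produce a coincidence: from $L_1(r,a_0)=L_3(c_0,a_0)=b_0$ and $L_2(s,a_0)=L_3(c_0,a_0)=b_0$, the characterization gives $L_{1,3}(r,c_0)=b_0=L_{2,3}(s,c_0)$. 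Hence there is precisely one column $c$ with $L_{1,3}(r,c)=L_{2,3}(s,c)$, which is the UFS property for $L_{1,3},L_{2,3}$, and the common value there is $b_0=L_{1,2}(r,s)$; the latter is exactly the statement that the Latin square built from $L_{1,3},L_{2,3}$ equals $L_{1,2}$.

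The only real obstacle is bookkeeping: each square carries three roles (row, column, symbol), and three squares are in play at once, so the argument must keep straight which index of which square is being matched. The observation that makes it tractable is that matching the symbols of $L_{1,3}$ and $L_{2,3}$ at a shared column $c$ collapses, via row-distinctness in $L_3$, to matching the symbols of $L_1$ and $L_2$ at a shared column --- precisely the UFS datum for $L_1,L_2$ --- after which the Latin property of $L_3$ supplies the unique $c$. I expect no delicate estimates, only careful tracking of these incidences.
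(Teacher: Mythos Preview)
Your proposal is correct and follows essentially the same route as the paper: unfold the defining property of $L_{1,3}$ and $L_{2,3}$, use row-distinctness in $L_3$ to force $a=a'$, and then invoke the UFS property of $L_1,L_2$. If anything, your write-up is slightly more complete than the paper's, since you explicitly check the converse (that the candidate $c_0$ really gives a coincidence) and explain why the unique pair $(a_0,b_0)$ determines a unique $c$ via the Latin property of $L_3$.
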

\begin{proof}
Let $l_1(i,j),l_2(i,j),l_3(i,j)$ be the $(i,j)$-entry of $L_1,L_2,L_3$ respectively, and $l_{1,3}(i,j),l_{2,3}(i,j)$ be the $(i,j)$-entry of $L_{1,3},L_{2,3}$ respectively.  
For $i,j\in\{1,\ldots,n\}$, we compare the $i$-th row of $L_{1,3}$ and the $j$-th row of $L_{2,3}$.
Then, by Lemma~\ref{lem:sl}, $l_{1,3}(i,k)=l_{2,3}(j,k)$ if and only if $l_{1}(i,a)=l_{3}(k,a)=b$, say, for $a\in\{1,\ldots,n\}$ and $l_{2}(j,a')=l_{3}(k,a')=b$ for $a'\in\{1,\ldots,n\}$. 
If the latter condition holds, then 
\begin{align}\label{eq:ls1}
a=a' \text{ and } l_1(i,a)=l_2(j,a)=b.
\end{align}  
Since $L_1$ and $L_2$ are UFS, \eqref{eq:ls1} indeed holds for unique $a,b$.  

Moreover, the resulting Latin square from $L_{1,3}$ and $L_{2,3}$ in this ordering is equal to $L_{1,2}$ by the above argument.  
\end{proof}

We now introduce the following concept which will be used heavily in Section~\ref{sec:elgdd}.   
\begin{definition}
Let $f\geq 3$ be an integer.
Let $L_{i,j}$ ($i,j\in\{1,\ldots,f\},i\neq j$) be Latin squares on the same symbol set. 
Then $L_{i,j}$ ($i,j\in\{1,\ldots,f\},i\neq j$) are said to be \emph{linked UFS Latin squares} if for any distinct $i,j,k\in\{1,\ldots,f\}$, 
$L_{i,k}$ and $L_{j,k}$ are UFS and the Latin square obtained from $L_{i,k}$ and $L_{j,k}$ in this ordering via Lemma~\ref{lem:sl} coincides with $L_{i,j}$.
\end{definition}

Mutually UFS Latin squares 
can be obtained from finite fields. 
In the following construction, the resulting mutually UFS Latin squares satisfy an additional condition described in Proposition~\ref{prop:mslsff} (ii). 
Let $\mathbb{F}_{p^n}$ be the finite field of $p^n$ elements $\alpha_1=0,\alpha_2,\ldots,\alpha_{p^n}$. 
Let $S$ be the subtraction table, i.e., $S=(\alpha_i-\alpha_j)_{i,j=1}^{p^n}$. 
For $k\in\{2,\ldots,p^n\}$, set $S_k=(\alpha_k(\alpha_i-\alpha_j))_{i,j=1}^{p^n}$. 
For distinct $k,k'\in\{2,\ldots,p^n\}$, let $S_{k,k'}$ denote the Latin square obtained from $S_k$ and $S_{k'}$ in this ordering.  
\begin{proposition}\label{prop:mslsff}
\begin{enumerate}
\item The matrices $S_k$ {\upshape(}$k\in\{2,\ldots,p^n\}${\upshape)} are mutually UFS Latin squares. 
\item If $p=2$, the Latin square obtained from $S_k$ and $S_{k,k'}$ is $S_{k'}$ for distinct $k,k'\in\{2,\ldots,2^n\}$.
\end{enumerate} 
\end{proposition}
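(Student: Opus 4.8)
The plan is to exploit the fact that every $S_k$ is the subtraction table $S$ rescaled by the nonzero scalar $\alpha_k$, so that the whole family lives inside the one‑parameter family of scaled subtraction tables $\{S_c : \alpha_c\in\mathbb{F}_{p^n}^{\times}\}$, where $S_c$ has $(i,j)$‑entry $\alpha_c(\alpha_i-\alpha_j)$. The key computation, which I would isolate first, is a closed formula for the square produced by Lemma~\ref{lem:sl}: given two such tables $S_a,S_b$ with $\alpha_a\neq\alpha_b$, the $(i,j)$‑entry of the resulting square is the unique common value $\alpha_a(\alpha_i-\alpha_t)=\alpha_b(\alpha_j-\alpha_t)$, and solving the linear equation $(\alpha_b-\alpha_a)\alpha_t=\alpha_b\alpha_j-\alpha_a\alpha_i$ for $\alpha_t$ and substituting back yields $\frac{\alpha_a\alpha_b}{\alpha_b-\alpha_a}(\alpha_i-\alpha_j)$. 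Thus combining $S_a$ and $S_b$ again gives a scaled subtraction table $S_c$ with $\alpha_c=\frac{\alpha_a\alpha_b}{\alpha_b-\alpha_a}$, and the whole problem reduces to bookkeeping of the scalars.

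For part (i), I would first note that scaling the subtraction table $S$ (which is a Latin square because $j\mapsto\alpha_i-\alpha_j$ is a bijection for each fixed $i$, and likewise down columns) by the nonzero scalar $\alpha_k$ again gives a Latin square, so each $S_k$ is a Latin square. For the UFS property I would fix a row $i$ of $S_k$ and a row $j$ of $S_{k'}$ and count the columns $c$ with $\alpha_k(\alpha_i-\alpha_c)=\alpha_{k'}(\alpha_j-\alpha_c)$; this rearranges to $(\alpha_{k'}-\alpha_k)\alpha_c=\alpha_{k'}\alpha_j-\alpha_k\alpha_i$, which, since $\alpha_k\neq\alpha_{k'}$, has exactly one solution $\alpha_c$ and hence exactly one fixed symbol. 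This is precisely the UFS condition, so the $S_k$ are mutually UFS.

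For part (ii), I would apply the closed formula twice. First, $S_{k,k'}$ is the square obtained from $S_k$ and $S_{k'}$, so by the formula it equals $S_\ell$ with $\alpha_\ell=\frac{\alpha_k\alpha_{k'}}{\alpha_{k'}-\alpha_k}$; in characteristic $2$ the denominator is $\alpha_k+\alpha_{k'}$, so $\alpha_\ell$ is symmetric in $k,k'$, which is the feature that singles out $p=2$. A short check gives $\alpha_\ell\neq\alpha_k$ (else $\alpha_k=0$), so $S_k$ and $S_{k,k'}$ are genuinely UFS and Lemma~\ref{lem:sl} applies. Applying the formula once more to $S_k$ and $S_{k,k'}=S_\ell$ produces $S_m$ with $\alpha_m=\frac{\alpha_k\alpha_\ell}{\alpha_\ell-\alpha_k}$; substituting $\alpha_\ell-\alpha_k=\frac{\alpha_k^2}{\alpha_{k'}-\alpha_k}$ and simplifying collapses this to $\alpha_m=\alpha_{k'}$, i.e. the resulting square is $S_{k'}$.

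The only real obstacle is keeping the scalar arithmetic clean: at each application one must verify that the relevant difference of scalars is nonzero, both so that the inverses exist and so that the UFS hypothesis of Lemma~\ref{lem:sl} is met, and the final collapse $\alpha_m=\alpha_{k'}$ hinges on the cancellation $\alpha_\ell-\alpha_k=\alpha_k^2/(\alpha_{k'}-\alpha_k)$. I expect no deeper difficulty, since everything follows directly from the single closed‑form rule for combining scaled subtraction tables.
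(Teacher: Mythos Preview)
Your proposal is correct and follows essentially the same route as the paper: both arguments derive the closed formula that combining $S_a$ and $S_b$ via Lemma~\ref{lem:sl} yields the scaled subtraction table with scalar $\dfrac{\alpha_a\alpha_b}{\alpha_b-\alpha_a}$, and then iterate this rule to handle~(ii). One small remark: your final computation $\alpha_\ell-\alpha_k=\dfrac{\alpha_k^2}{\alpha_{k'}-\alpha_k}$ and hence $\alpha_m=\alpha_{k'}$ nowhere uses characteristic~$2$, so your aside that the symmetry of $\alpha_\ell$ in $k,k'$ ``is the feature that singles out $p=2$'' is not actually invoked; your argument in fact proves~(ii) for every prime $p$, whereas the paper's version reaches $\dfrac{\alpha_k\alpha_{k'}}{\alpha_k-2\alpha_{k'}}$ and only then appeals to $p=2$.
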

\begin{proof}
(i): 
It is clear that each $S_k$ is a Latin square. 
For any distinct $k,k'\in\{2,\ldots,p^n\}$ and any $i,i'\in\{1,\ldots,p^n\}$, 
the $i$-th row of $S_k$ and the $i'$-th row of $S_{k'}$ 
agree in the $l$-th entry $\frac{\alpha_k \alpha_{k'}(\alpha_i-\alpha_{i'})}{\alpha_k-\alpha_{k'}}$ for $\alpha_l:=\frac{\alpha_k\alpha_i-\alpha_{k'}\alpha_{i'}}{\alpha_k-\alpha_{k'}}$.

(ii): 
By the proof of (i), $S_{k,k'}$ is equal to $S_m$, where $m$ is determined by $\alpha_m=\frac{\alpha_k \alpha_{k'}}{\alpha_k-\alpha_{k'}}$. 
Then the Latin square obtained from $S_k$ and $S_m$ is $S_x$, where $x$ is determined by:
\begin{align*}
\alpha_x=\frac{\alpha_k \alpha_{m}}{\alpha_k-\alpha_{m}}
=\frac{\alpha_k \frac{\alpha_k \alpha_{k'}}{\alpha_k-\alpha_{k'}}}{\alpha_k-\frac{\alpha_k \alpha_{k'}}{\alpha_k-\alpha_{k'}}}
=\frac{\alpha_k \alpha_{k'}}{\alpha_k-2\alpha_{k'}}
=\alpha_{k'},  
\end{align*} 
where we made use of the characteristic being two in the last equation. 
Thus $S_x=S_{k'}$ as desired. 
\end{proof}

\subsection{Generalized Hadamard matrices}
Let $G$ be an additively written finite abelian group of order $g$.  
A square matrix $H=(h_{ij})_{i,j=1}^{g\lambda}$ of order $g\lambda$ with entries from $G$ is called a {\it generalized Hadamard matrix} with the parameters $(g,\lambda)$ (or $GH(g,\lambda)$) over $G$ 
if for all distinct $i,k\in\{1,\ldots,g\lambda\}$, the multiset $\{h_{ij}-h_{kj}: 1\leq j\leq g\lambda\}$ 
contains each element of $G$ exactly $\lambda$ times.

Let $G$ be a finite abelian group.  
Throughout this paper, the abelian group $G$ is isomorphic to $\oplus_{i=1}^m \mathbb{Z}_{n_i}$ of order $g=n_1+\cdots+n_m$. 
Let $r_p$
be a $p\times p$ circulant matrix with the first row $(0,1,0,\ldots,0)$. 
Define a group homomorphism $\phi:\oplus_{i=1}^m \mathbb{Z}_{n_i}\rightarrow GL_{g}(\mathbb{R})$ as $\phi((x_i)_{i=1}^m)=\otimes_{i=1}^m r_{n_i}^{x_i}$. 
We call $\phi$ the \emph{permutation representation of $G$}. 

We will use the following construction of symmetric group divisible designs. 
Let $H=(h_{ij})_{i,j=1}^{g\lambda}$ be a generalized Hadamard matrix $GH(g,\lambda)$ over $G$. 
Define a matrix $C_k$ of order $g^2\lambda$ ($k=1,\ldots,g\lambda$) as
\begin{align*}
C_k=(\phi(-h_{ki}+h_{kj}))_{i,j=1}^{g\lambda}.
\end{align*}
We also use $C_{H,k}$ instead of $C_k$ to emphasize which of
the generalized Hadamard matrices $H$ we use.
The following are basic properties for $C_k$ generalizing those for Hadamard matrices (case of $G=\mathbb{Z}_2$) used in \cite{K}.  
\begin{lemma}\label{lem:gh1}
\begin{enumerate}
\item $\sum_{k=1}^{g\lambda}C_k=g\lambda I_{g\lambda}\otimes I_{g}+\lambda(J_{g\lambda}-I_{g\lambda})\otimes J_g$.
\item For any $k\in\{1,\ldots,g\lambda\}$, $C_kC_k^\top=g\lambda C_k$.
\item For any distinct $k,k'\in\{1,\ldots,g\lambda\}$, $C_kC_{k'}^\top=\lambda J_{g^2\lambda}$.
\end{enumerate}
\end{lemma}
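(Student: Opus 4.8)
The plan is to reduce all three identities to a single Gram-type factorization of $C_k$ and then exploit two elementary facts about the permutation representation $\phi$: since each $\phi(a)$ is a permutation matrix we have $\phi(a)^\top=\phi(-a)=\phi(a)^{-1}$ and $J_g\phi(a)=\phi(a)J_g=J_g$, and since $\phi$ is a homomorphism with $\sum_{a\in G}\phi(a)=\bigotimes_{i=1}^m J_{n_i}=J_g$. Writing $\Phi_k=(\phi(h_{k1}),\ldots,\phi(h_{k,g\lambda}))$ for the $g\times g^2\lambda$ matrix obtained by concatenating the blocks in the $k$-th row, the homomorphism property gives $(C_k)_{i,j}=\phi(-h_{ki}+h_{kj})=\phi(h_{ki})^\top\phi(h_{kj})$, so that $C_k=\Phi_k^\top\Phi_k$; in particular each $C_k$ is symmetric. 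The two products I will need are $\Phi_k\Phi_{k}^\top=\sum_{j}\phi(h_{kj})\phi(h_{kj})^\top=\sum_j I_g=g\lambda I_g$ and, for $k\neq k'$, $\Phi_k\Phi_{k'}^\top=\sum_j\phi(-h_{k'j}+h_{kj})$.

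Given this, (ii) and (iii) are immediate. For (ii), using $C_k^\top=C_k$, I would compute $C_kC_k^\top=\Phi_k^\top(\Phi_k\Phi_k^\top)\Phi_k=g\lambda\,\Phi_k^\top\Phi_k=g\lambda C_k$. For (iii) with $k\neq k'$, the multiset $\{h_{kj}-h_{k'j}:1\le j\le g\lambda\}$ covers each element of $G$ exactly $\lambda$ times by the defining (row) property of the generalized Hadamard matrix $H$, whence $\Phi_k\Phi_{k'}^\top=\sum_j\phi(h_{kj}-h_{k'j})=\lambda\sum_{a\in G}\phi(a)=\lambda J_g$; then $C_kC_{k'}^\top=\Phi_k^\top(\lambda J_g)\Phi_{k'}=\lambda\,\Phi_k^\top J_g\Phi_{k'}$, and because every block of $\Phi_k^\top J_g\Phi_{k'}$ equals $\phi(h_{ki})^\top J_g\phi(h_{k'j})=J_g$, this product is $J_{g\lambda}\otimes J_g=J_{g^2\lambda}$, giving $C_kC_{k'}^\top=\lambda J_{g^2\lambda}$.

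For (i) the $(i,j)$-block of $\sum_k C_k$ is $\sum_{k=1}^{g\lambda}\phi(-h_{ki}+h_{kj})$. When $i=j$ this is $\sum_k\phi(0)=g\lambda I_g$, accounting for the term $g\lambda I_{g\lambda}\otimes I_g$. The content is the case $i\neq j$, where I must show $\sum_k\phi(h_{kj}-h_{ki})=\lambda J_g$; equivalently, that for fixed distinct columns $i,j$ the \emph{column} differences $\{h_{kj}-h_{ki}:1\le k\le g\lambda\}$ again cover $G$ uniformly $\lambda$ times. This is the one genuinely nontrivial input, and I expect it to be the main obstacle, since the definition of $H$ only guarantees the analogous property for pairs of \emph{rows}. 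I would obtain it from the standard fact that the transpose of a generalized Hadamard matrix over an abelian group is again a $GH(g,\lambda)$: for each nontrivial character $\chi$ of $G$ the matrix $M_\chi=(\chi(h_{kl}))_{k,l}$ satisfies $M_\chi M_\chi^\ast=g\lambda I$ by the row property, hence $M_\chi^\ast M_\chi=g\lambda I$ because a square matrix with a one-sided inverse has it as a two-sided inverse, and reading off the $(i,j)$-entry gives $\sum_k\chi(h_{kj}-h_{ki})=0$ for $i\neq j$. Fourier inversion over $G$ then forces each value $a\in G$ to occur exactly $g\lambda/g=\lambda$ times among the column differences, so $\sum_k\phi(h_{kj}-h_{ki})=\lambda\sum_{a\in G}\phi(a)=\lambda J_g$. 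Assembling the diagonal and off-diagonal blocks yields $\sum_k C_k=g\lambda I_{g\lambda}\otimes I_g+\lambda(J_{g\lambda}-I_{g\lambda})\otimes J_g$, as claimed.
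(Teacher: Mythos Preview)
Your proof is correct and structurally close to the paper's, but with two differences worth noting. For (ii) and (iii) the paper computes the $(i,j)$-block of $C_kC_{k'}^\top$ directly as $\sum_l\phi(-h_{ki}+h_{kl})\phi(-h_{k'j}+h_{k'l})^\top=\sum_l\phi(-h_{ki}+h_{kl}-h_{k'l}+h_{k'j})$ and then splits on $k=k'$ versus $k\neq k'$; your Gram factorization $C_k=\Phi_k^\top\Phi_k$ reduces this to evaluating the $g\times g$ product $\Phi_k\Phi_{k'}^\top$ first and then sandwiching, which is a cleaner repackaging of the same calculation. For (i) the paper simply asserts that for $i\neq j$ the block $\sum_k\phi(-h_{ki}+h_{kj})$ equals $\lambda J_g$, tacitly using the \emph{column}-difference property of $H$; you correctly identify this as the nontrivial input and supply the standard character argument showing that $H^\top$ is again a $GH(g,\lambda)$ over an abelian group. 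So your argument is slightly more streamlined in (ii)--(iii) and more careful in (i), but at heart it is the same proof.
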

\begin{proof}
(i): For any $i,j\in \{1,\ldots,g\lambda\}$,
\begin{align*} 
\text{the $(i,j)$-block of }\sum_{k=1}^{g\lambda}C_k&=\sum_{k=1}^{g\lambda}\phi(-h_{ki}+h_{kj})\\
&=\begin{cases}
g\lambda I_{g} & \text{ if }i=j,\\
\lambda J_{g} & \text{ if }i\neq j.
\end{cases}
\end{align*}
Thus we obtain $\sum_{k=1}^{g\lambda}C_k=g\lambda I_{g\lambda}\otimes I_{g}+\lambda(J_{g\lambda}-I_{g\lambda})\otimes J_g$.

(ii), (iii): For any $k,k',i,j\in \{1,\ldots,g\lambda\}$, 
\begin{align*}
\text{the $(i,j)$-block of }C_kC_{k'}^\top&=\sum_{l=1}^{g\lambda}\phi(-h_{ki}+h_{kl})\phi(-h_{k'j}+h_{k'l})^\top\\
&=\sum_{l=1}^{g\lambda}\phi(-h_{ki}+h_{kl}+h_{k'j}-h_{k'l})\\
&=\begin{cases}
\sum_{l=1}^{g\lambda}\phi(-h_{ki}+h_{kj}) & \text{ if }k=k'\\
\sum_{l=1}^{g\lambda}\phi(-h_{ki}+h_{kl}-h_{k'l}+h_{k'j}) & \text{ if }k\neq k'
\end{cases}\\
&=\begin{cases}
g\lambda \phi(-h_{ki}+h_{kj}) & \text{ if }k=k',\\
\lambda J_{g\lambda} & \text{ if }k\neq k'. \qedhere
\end{cases}
\end{align*}
\end{proof}

The following is a basic construction of a symmetric group divisible design from any generalized Hadamard matrix. 
For a generalized Hadamard matrix $H$, 
let $\phi(H):=(\phi(h_{ij}))_{i,j=1}^{g\lambda}$.
\begin{proposition}\label{prop:gdd}
Let $H$ be a $GH(g,\lambda)$ over an abelian group $G$ with the permutation representation $\phi$. 
Then the matrix $\phi(H)$ is the incidence matrix of a symmetric group divisible design with parameters $(g^2\lambda,g\lambda,g\lambda,g,0,\lambda)$. 
\end{proposition}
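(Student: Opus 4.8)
The plan is to invoke the matrix characterization stated just after \eqref{eq:gdd}: writing $A=\phi(H)$, it suffices to check that $A$ is a $(0,1)$-matrix of order $g^2\lambda$ satisfying \eqref{eq:gdd} with $(v,k,m,n,\lambda_1,\lambda_2)=(g^2\lambda,g\lambda,g\lambda,g,0,\lambda)$, i.e.
\begin{align*}
AA^\top=A^\top A=g\lambda I_{g^2\lambda}+\lambda(J_{g^2\lambda}-I_{g\lambda}\otimes J_g).
\end{align*}
That $A$ is a $(0,1)$-matrix is immediate since each block $\phi(h_{ij})$ is a permutation matrix, and the same observation shows every row and column sum of $A$ equals $g\lambda$. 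The term $I_{g\lambda}\otimes J_g$ is exactly $I_m\otimes J_n$ with $m=g\lambda$, $n=g$, so once the displayed identity is verified the characterization returns the asserted parameters directly.

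First I would compute $AA^\top$ blockwise. Because $\phi$ is a homomorphism into orthogonal (permutation) matrices, $\phi(h_{jl})^\top=\phi(-h_{jl})$, so the $(i,j)$-block of $AA^\top$ is $\sum_{l=1}^{g\lambda}\phi(h_{il}-h_{jl})$. For $i=j$ every term is $\phi(0)=I_g$, giving $g\lambda I_g$. For $i\neq j$ the defining property of a $GH(g,\lambda)$ says the multiset $\{h_{il}-h_{jl}\}_{l}$ contains each element of $G$ exactly $\lambda$ times, so the block equals $\lambda\sum_{x\in G}\phi(x)$. Here I would use the identity $\sum_{x\in G}\phi(x)=J_g$, which holds because $\phi$ is the regular representation of $G$ (from $\phi((x_i)_{i=1}^m)=\otimes_{i=1}^m r_{n_i}^{x_i}$ together with $\sum_{t}r_{n_i}^{t}=J_{n_i}$). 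Assembling the blocks yields precisely $g\lambda I_{g^2\lambda}+\lambda(J_{g^2\lambda}-I_{g\lambda}\otimes J_g)$.

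The main obstacle is the equality $A^\top A=AA^\top$: the analogous computation gives the $(i,j)$-block of $A^\top A$ as $\sum_{l}\phi(h_{lj}-h_{li})$, which requires the column-difference version of the $GH$ condition, that is, that $H^\top$ is again a $GH(g,\lambda)$. This does not follow formally from the row condition, since in general $AA^\top$ and $A^\top A$ need only share eigenvalues and $A$ need not be symmetric. I would resolve it via the character theory of the abelian group $G$: applying a character $\chi\colon G\to\mathbb{C}^\times$ entrywise to $H$ produces $H_\chi=(\chi(h_{ij}))$, and the $GH(g,\lambda)$ condition descends to $H_\chi H_\chi^{*}=g\lambda I_{g\lambda}+\lambda\chi(\sigma)(J_{g\lambda}-I_{g\lambda})$, where $\sigma=\sum_{x\in G}x$ satisfies $\chi(\sigma)=0$ for nontrivial $\chi$ and $\chi(\sigma)=g$ for the trivial character. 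Hence for each nontrivial $\chi$ the matrix $(g\lambda)^{-1/2}H_\chi$ is unitary, so $H_\chi^{*}H_\chi=g\lambda I_{g\lambda}$ as well, while for the trivial character $H_\chi=J_{g\lambda}$ gives $H_\chi^{*}H_\chi=g\lambda J_{g\lambda}$; in every case $H_\chi^{*}H_\chi$ equals the same expression as $H_\chi H_\chi^{*}$. Since the characters jointly separate the group algebra (equivalently, since $\phi(\mathbb{R}[G])$ is a commutative semisimple algebra that splits into these components), this forces the column-difference identity and hence $A^\top A=N$, completing the verification.
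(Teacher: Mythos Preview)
Your blockwise computation of $AA^\top$ is identical to the paper's proof. For $A^\top A$ the paper simply writes ``Similarly we have the formula for $\phi(H)^\top\phi(H)$'', tacitly using that the transpose of a $GH(g,\lambda)$ over an abelian group is again a $GH(g,\lambda)$; your character-theoretic argument is the standard way to justify this well-known fact, so your version is more self-contained but follows the same route. (The trailing ``$A^\top A=N$'' looks like a typo for the displayed right-hand side; the intent is clear.)
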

\begin{proof}
Compute $\phi(H)\phi(H)^\top$ as follows: for $i,j\in\{1,\ldots,g\lambda\}$, 
\begin{align*}
\text{the $(i,j)$-block of }\phi(H)\phi(H)^\top&=\sum_{k=1}^{g\lambda}\phi(h_{ik})\phi(h_{jk})^\top\\
&=\sum_{k=1}^{g\lambda}\phi(h_{ik}-h_{jk})\\
&=\begin{cases}
\sum_{k=1}^{g\lambda}\phi(e)= g\lambda I_g & \text{ if } i=j,\\
\lambda\sum_{x\in G}\phi(x)=\lambda J_g & \text{ if } i\neq j. 
\end{cases}
\end{align*}
Thus we have $\phi(H)\phi(H)^\top=g\lambda I_{g^2\lambda}+\lambda(J_{g^2\lambda}-I_{g\lambda}\otimes J_{g})$.
Similarly we have the formula for $\phi(H)^\top\phi(H)$. 
Thus $\phi(H)$ is the incidence matrix of a symmetric group divisible design with the desired parameters. 
\end{proof}

Let $H=(h_{ij})_{i,j=1}^{g\lambda}$ be a $GH(g,\lambda)$ with $i$-th row equal to $h_i$. 
Define $M$ to be a block matrix with each block, denoted $D_{ij}$ for $1\leq i,j\leq g\lambda$, square of order $g^2\lambda$, such that  $D_{ij}:=\phi(h_j)^\top\phi(h_i)$. 
\begin{proposition}\label{prop:gddgh}
The matrix $M$ is the incidence matrix of a symmetric group divisible design with the parameters $(g^3\lambda^2,g^2\lambda^2,g^2\lambda^2,g,0,g\lambda^2)$. 
\end{proposition}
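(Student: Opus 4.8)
The plan is to verify directly that $M$ satisfies the characterization \eqref{eq:gdd}, since the excerpt records that a $(0,1)$-matrix of order $v$ is the incidence matrix of a symmetric group divisible design with the stated parameters if and only if \eqref{eq:gdd} holds. Writing $Q_i:=\phi(h_i)$ for the $g\times g^2\lambda$ matrix forming the $i$-th block row of $\phi(H)$, the $(i,j)$-block of $M$ is $D_{ij}=Q_j^\top Q_i$. First I would record two structural facts. Since $\phi$ is the regular (hence permutation) representation, $\phi(x)^\top=\phi(-x)$, so the $(l,l')$-subblock of $D_{ij}$ equals $\phi(h_{il'}-h_{jl})$, a permutation matrix; this shows at once that $M$ is a $(0,1)$-matrix and, since $D_{ij}^\top=D_{ji}$, that $M$ is symmetric. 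In particular $MM^\top=M^\top M=M^2$, so I only need to compute one product. The second fact concerns the ``outer'' products $Q_iQ_j^\top=\phi(h_i)\phi(h_j)^\top=\sum_{l}\phi(h_{il}-h_{jl})$, which by the generalized Hadamard property (exactly as in the proof of Proposition~\ref{prop:gdd}) equals $g\lambda I_g$ when $i=j$ and $\lambda J_g$ when $i\neq j$.

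The core of the argument is the block computation
\begin{align*}
(M^2)_{ij}=\sum_{l=1}^{g\lambda}D_{il}D_{jl}^\top=\sum_{l=1}^{g\lambda}Q_l^\top(Q_iQ_j^\top)Q_l,
\end{align*}
into which I would substitute the two cases above. For $i=j$ this gives $g\lambda\sum_l Q_l^\top Q_l=g\lambda\,\phi(H)^\top\phi(H)$, and here I invoke the dual property of $\phi(H)$ from Proposition~\ref{prop:gdd}, namely $\phi(H)^\top\phi(H)=g\lambda I_{g^2\lambda}+\lambda(J_{g^2\lambda}-I_{g\lambda}\otimes J_g)$. For $i\neq j$, using $J_g=\mathbf 1_g\mathbf 1_g^\top$ together with the observation that $\mathbf 1_g^\top Q_l=\mathbf 1_{g^2\lambda}^\top$ (each column of the permutation-block matrix $Q_l$ carries a single one), I obtain $Q_l^\top J_gQ_l=J_{g^2\lambda}$ and hence $(M^2)_{ij}=g\lambda^2 J_{g^2\lambda}$.

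Finally I would assemble the blocks in Kronecker notation: with diagonal blocks $g^2\lambda^2 I_{g^2\lambda}+g\lambda^2(J_{g^2\lambda}-I_{g\lambda}\otimes J_g)$ and off-diagonal blocks $g\lambda^2 J_{g^2\lambda}$, the whole matrix collapses to $g^2\lambda^2 I_{g^3\lambda^2}+g\lambda^2(J_{g^3\lambda^2}-I_{g^2\lambda^2}\otimes J_g)$, which is precisely \eqref{eq:gdd} with $k=g^2\lambda^2$, $\lambda_1=0$, $\lambda_2=g\lambda^2$, $m=g^2\lambda^2$, $n=g$, and $v=g^3\lambda^2$. I expect the only genuine obstacle to be the $i=j$ diagonal block: it rests on the dual/column-balance of $\phi(H)$ (the formula for $\phi(H)^\top\phi(H)$) rather than the defining row-balance, so it is worth flagging that this is exactly the ``symmetric''/dual half of Proposition~\ref{prop:gdd}. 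The Kronecker bookkeeping in the last step is routine, but must be done carefully so that the $-I_{g\lambda}\otimes J_g$ term sitting inside the diagonal blocks combines with the off-diagonal all-ones blocks to produce the correct global term $I_m\otimes J_n=I_{g^2\lambda^2}\otimes J_g$.
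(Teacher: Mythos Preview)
Your proof is correct and follows essentially the same block computation as the paper: both compute $(MM^\top)_{ij}=\sum_l D_{il}D_{jl}^\top$, insert $Q_iQ_j^\top=g\lambda I_g$ or $\lambda J_g$, and assemble. Two small streamlinings in your version are worth noting: you observe that $D_{ij}^\top=D_{ji}$ makes $M$ symmetric, so the separate verification of $M^\top M$ in the paper is unnecessary; and for the diagonal block you recognize $\sum_l Q_l^\top Q_l=\phi(H)^\top\phi(H)$ and cite Proposition~\ref{prop:gdd}, whereas the paper recomputes $\sum_l D_{ll}$ from scratch (which is literally the same calculation).
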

\begin{proof}
For $i,i',j,j'\in\{1,\ldots,g\lambda\}$, 
\begin{align*}
D_{ij}D_{i'j'}^\top&=\phi(h_j)^\top\phi(h_i)\phi(h_{i'})^\top\phi(h_{j'})\\
&=\begin{cases}
g\lambda \phi(h_j)^\top I_g\phi(h_{j'}) & \text{ if } i=i' \\
\lambda \phi(h_j)^\top J_g\phi(h_{j'}) & \text{ if } i\neq i' 
\end{cases}\\
&=\begin{cases}
g\lambda D_{j'j} & \text{ if } i=i', \\
\lambda J_{g^2\lambda} & \text{ if } i\neq i'.   
\end{cases}
\end{align*}
And, since 
\begin{align*}
\text{the $(i,j)$-block of }\sum_{l=1}^{g\lambda}D_{ll}&=\sum_{l=1}^{g\lambda}\phi(-h_{li}+h_{lj})=\begin{cases}
g\lambda I_g & \text{ if }i=j,\\
\lambda J_g & \text{ if }i\neq j,
\end{cases}
\end{align*}
we obtain 
\begin{align*}
\sum_{l=1}^{g\lambda}D_{ll}=g\lambda I_{g^2\lambda}+\lambda(J_{g^2\lambda}-I_{g\lambda} \otimes J_g).
\end{align*}
Then the $(i,j)$-block of $M M^\top$ is 
\begin{align*}
\sum_{l=1}^{g\lambda}D_{il}D_{jl}^\top&=\begin{cases}
g\lambda \sum_{l=1}^{g\lambda}D_{ll} & \text{ if } i=j, \\
\lambda \sum_{l=1}^{g\lambda}J_{g^2\lambda} & \text{ if } i\neq j, 
\end{cases}\\
&=\begin{cases}
g^2\lambda^2 I_{g^2\lambda}+g\lambda^2(J_{g^2\lambda}-I_{g\lambda} \otimes J_g) & \text{ if } i=j, \\
g\lambda^2 J_{g^2\lambda} & \text{ if } i\neq j. 
\end{cases}
\end{align*} 
Thus we obtain 
\begin{align*}
M M^\top=g^2\lambda^2 I_{g^3\lambda^2}+g\lambda^2(J_{g^3\lambda^2}-I_{g^2\lambda^2}\otimes J_g).
\end{align*}
Similarly we have the same formula for $M^\top M$. 
Therefore the matrix $M$ is the incidence matrix of a symmetric group divisible design with the parameters $(g^3\lambda^2,g^2\lambda^2,g^2\lambda^2,g,0,g\lambda^2)$. 
\end{proof}

Finally we will state the following lemma skipping its simple proof, which will be used in Theorem~\ref{thm:gh1}. 
Recall that $D_{ij}:=\phi(h_j)^\top\phi(h_i)$ for $i,j\in\{1,\ldots,g\lambda\}$.  
\begin{lemma}\label{lem:cd}
The following hold.
\begin{enumerate}
\item For any $i,j\in\{1,\ldots,g\lambda\}$, $C_j D_{ij}=D_{ji} C_j=g\lambda D_{ij}$. 
\item For any $i,j,k\in\{1,\ldots,g\lambda\}$ such that $k\neq j$, $C_k D_{ij}= D_{ji}C_k=\lambda J_{g\lambda}$. 
\end{enumerate}
\end{lemma}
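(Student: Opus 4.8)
The plan is to prove both identities by a direct block computation, relying only on the fact that $\phi$ is a homomorphism into permutation matrices, so that $\phi(x)\phi(y)=\phi(x+y)$ and $\phi(x)^\top=\phi(-x)$, together with the defining property of the generalized Hadamard matrix $H$. Since $D_{ij}=\phi(h_j)^\top\phi(h_i)$, the $(p,q)$-block of $D_{ij}$ is $\phi(h_{iq}-h_{jp})$, while the $(p,q)$-block of $C_k$ is $\phi(h_{kq}-h_{kp})$. The block entries of $C_k$ in positions $(p,q)$ and $(q,p)$ are mutually transposed, so each $C_k$ is a symmetric matrix; combined with $D_{ij}^\top=D_{ji}$, this means the right-multiplication identity in each part is obtained from the left-multiplication one simply by transposing. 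It therefore suffices to compute $C_jD_{ij}$ and $C_kD_{ij}$.

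For part (i) the mechanism is a telescoping cancellation. Writing out the $(p,q)$-block of $C_jD_{ij}$ gives
\[
\sum_{l=1}^{g\lambda}\phi(h_{jl}-h_{jp})\,\phi(h_{iq}-h_{jl})
=\sum_{l=1}^{g\lambda}\phi(h_{iq}-h_{jp})
=g\lambda\,\phi(h_{iq}-h_{jp}),
\]
where the summation variable $l$ disappears inside $\phi$ precisely because the second subscript of $C_j$ agrees with the first subscript of $D_{ij}$. The resulting block is $g\lambda$ times the $(p,q)$-block of $D_{ij}$, establishing $C_jD_{ij}=g\lambda D_{ij}$, and transposing yields the companion right-multiplication identity.

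For part (ii) the cancellation fails and the generalized Hadamard property takes over. The $(p,q)$-block of $C_kD_{ij}$ is
\[
\sum_{l=1}^{g\lambda}\phi(h_{kl}-h_{kp})\,\phi(h_{iq}-h_{jl})
=\sum_{l=1}^{g\lambda}\phi\bigl((h_{kl}-h_{jl})+(h_{iq}-h_{kp})\bigr).
\]
Because $k\neq j$, the multiset $\{h_{kl}-h_{jl}:1\le l\le g\lambda\}$ contains every element of $G$ exactly $\lambda$ times, so the sum equals $\lambda\sum_{x\in G}\phi(x+(h_{iq}-h_{kp}))=\lambda\sum_{y\in G}\phi(y)=\lambda J_g$, independently of $p$ and $q$. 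Hence $C_kD_{ij}=\lambda J_{g^2\lambda}$, and the remaining identity follows by transposition.

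The only genuine hazard is bookkeeping: the entire dichotomy is controlled by whether the second index of the $C$-factor coincides with the first index of the $D$-factor. When it does, the $l$-dependence cancels and one obtains a scalar multiple of a single $D$-block; when it does not, one is forced to sum $h_{kl}-h_{jl}$ over $l$, and the one substantive input is the generalized Hadamard identity $\sum_{l}\phi(h_{kl}-h_{jl})=\lambda J_g$ for $k\neq j$. Everything else is routine homomorphism algebra, which is presumably why the authors describe the proof as simple.
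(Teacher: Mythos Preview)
Your argument is correct and is exactly the routine block-by-block verification the authors intended; the paper in fact omits the proof entirely, describing it as simple. Two minor remarks: the transposition in part~(i) actually yields $D_{ji}C_j=g\lambda D_{ji}$ rather than $g\lambda D_{ij}$ as printed---this is a typo in the stated lemma (as is $J_{g\lambda}$ for $J_{g^2\lambda}$ in part~(ii), which you silently corrected)---and your closing comment that the dichotomy is governed by the \emph{first} index of the $D$-factor should say the \emph{second} index $j$, though your actual computations have this right.
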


\section{Linked systems of symmetric group divisible designs}\label{sec:lgdd}
Throughout, $f$ denotes an integer 
 greater than or equal $2$. 
\begin{definition}
Let $f\geq 3$.  
Let $(\Omega_i,\Omega_j,I_{i,j})$ be an incidence structure satisfying 
$\Omega_i\cap \Omega_j=\emptyset$, $I_{j,i}^\top=I_{i,j}$ for any distinct integers $i,j \in \{1,\dots,f\}$. 
We put $\Omega=\bigcup_{i=1}^f\Omega_i$, $I=\bigcup_{i\neq j} I_{i,j}$.
The pair $(\Omega, I)$ is called a \defn{linked system of symmetric group divisible designs with parameters $(v,k,m,n,\lambda_1,\lambda_2)$} if the following conditions hold:
\begin{enumerate}
\item there exists a partition $\{\Omega_{i,1},\ldots,\Omega_{i,m}\}$ of $\Omega_i$ for all $i\in\{1,\ldots,f\}$  such that for any distinct $i,j\in\{1,\dots,f\}$, $(\Omega_i,\Omega_j,I_{i,j})$ is a symmetric group divisible design with parameters $(v,k,m,n,\lambda_1,\lambda_2)$ with respect to the partitions $\{\Omega_{i,1},\ldots,\Omega_{i,m}\}$ and $\{\Omega_{j,1},\ldots,\Omega_{j,m}\}$,
\item for any distinct $i,j,l \in\{1,\dots,f\}$, and for any $x\in \Omega_i, y\in \Omega_j$, the number of $z\in \Omega_l$ incident with 
both $x$ and $y$ depends only on whether $x$ and $y$ are incident or not, and does not depend on $i,j,l$.
\end{enumerate}
\end{definition}

We define the integers $\sigma,\tau$ by
\[
|\{z\in \Omega_l \mid (x,z)\in I_{i,l}, (y,z)\in I_{j,l}\}|=
\begin{cases}\sigma&\text{ if }\ (x,y)\in I_{i,j},\\
\tau&\text{ if }\ (x,y)\not\in I_{i,j},
\end{cases}
\]
where $i,j,l \in\{1,\dots,f\}$ are distinct, $x\in \Omega_i$, and $y\in \Omega_j$.

The case $f=2$ with a regularity condition will appear in Theorem~\ref{thm:aslgdd0} and Theorem~\ref{thm:aslgdd} below. 

The linked systems of improper symmetric group divisible designs are known as \defn{linked systems of symmetric designs}. 

Let $A_{i,j}$ be the incidence matrix of the incidence structure $(\Omega_i,\Omega_j,I_{i,j})$ for any distinct $i,j\in\{1,\ldots,f\}$. 
Then $A_{i,j}^\top=A_{j,i}$ holds and  (i), (ii) in the definition of a linked system of symmetric group divisible designs read as follows:
\begin{itemize} 
\item[(L1)] $A_{i,j}A_{i,j}^\top=k I_v+\lambda_1(I_m\otimes J_n-I_v)+\lambda_2(J_v-I_m\otimes J_n)$ for any distinct $i,j\in\{1,\ldots,f\}$, 
\item[(L2)] $A_{i,j}A_{j,l}=\sigma A_{i,l}+\tau(J_v-A_{i,l})$ for any distinct $i,j,l\in\{1,\ldots,f\}$.
\end{itemize}
We also refer to the $v\times v$ $(0,1)$-matrices $A_{i,j}$ ($i,j\in\{1,\ldots,f\},i\neq j$) satisfying $A_{i,j}^\top=A_{j,i}$ for any distinct $i,j$ and (L1), (L2), as a linked system of symmetric group divisible designs.  

The complements $A'_{i,j}=J_v-A_{i,j}$ ($i,j\in\{1,\ldots,f\},i\neq j$) satisfy that for any distinct $i,j,l\in\{1,\ldots,f\}$,
\begin{align*}
A'_{i,j}A'_{j,l}=(v-2k+\tau)A'_{i,l}+(v-2k+\sigma)(J_v-A'_{i,l}).
\end{align*}
Thus we have the following lemma.
\begin{lemma}\label{lem:comp}
Let $A_{i,j}$ {\upshape (}$i,j\in\{1,\ldots,f\},i\neq j${\upshape )} be a linked system of symmetric group divisible designs with parameters $(v,k,m,n,\lambda_1,\lambda_2)$ and $\sigma,\tau$. 
The complements $A'_{i,j}$ form a linked system of symmetric group divisible designs with parameters $(v,v-k,m,n,v-2k+\lambda_1,v-2k+\lambda_2)$ and $\sigma'=v-2k+\tau,\tau'=v-2k+\sigma$. 
\end{lemma}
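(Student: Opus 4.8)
The plan is to verify, for the family $A'_{i,j}=J_v-A_{i,j}$, the three defining properties of a linked system of symmetric group divisible designs: the transpose relation $(A'_{i,j})^\top=A'_{j,i}$, condition (L1) with the complemented parameters, and condition (L2) with $\sigma'=v-2k+\tau$ and $\tau'=v-2k+\sigma$. Each of these reduces to algebra that has essentially been recorded already in the excerpt, so the work is one of assembly rather than of genuine computation.

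First, the transpose relation is immediate: since $J_v$ is symmetric and $A_{i,j}^\top=A_{j,i}$ by hypothesis, we get $(A'_{i,j})^\top=(J_v-A_{i,j})^\top=J_v-A_{j,i}=A'_{j,i}$. Next, for (L1) I would note that each $A_{i,j}$ is by assumption the incidence matrix of a symmetric group divisible design with parameters $(v,k,m,n,\lambda_1,\lambda_2)$, and hence each satisfies $A_{i,j}A_{i,j}^\top=kI_v+\lambda_1(I_m\otimes J_n-I_v)+\lambda_2(J_v-I_m\otimes J_n)$. The complement identity displayed just before the lemma then applies verbatim to each pair, giving $A'_{i,j}(A'_{i,j})^\top=(v-k)I_v+(v-2k+\lambda_1)(I_m\otimes J_n-I_v)+(v-2k+\lambda_2)(J_v-I_m\otimes J_n)$, which is exactly (L1) with the complemented parameters; nothing new need be computed here.

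The only substantive step is (L2). Here I would expand $A'_{i,j}A'_{j,l}=(J_v-A_{i,j})(J_v-A_{j,l})$ and use two facts: that each incidence matrix of a symmetric group divisible design satisfies $A_{i,j}J_v=J_v A_{j,l}=kJ_v$, and the original relation (L2), namely $A_{i,j}A_{j,l}=\sigma A_{i,l}+\tau(J_v-A_{i,l})$. Expanding yields $A'_{i,j}A'_{j,l}=(v-2k)J_v+\sigma A_{i,l}+\tau(J_v-A_{i,l})$. Substituting $A_{i,l}=J_v-A'_{i,l}$, so that $J_v-A_{i,l}=A'_{i,l}$, and collecting terms into the canonical form $\sigma'A'_{i,l}+\tau'(J_v-A'_{i,l})$ reads off $\sigma'=v-2k+\tau$ and $\tau'=v-2k+\sigma$, matching the stated values; this is precisely the computation displayed immediately preceding the lemma.

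There is no real obstacle in this argument, since all three verifications follow from identities already established in the surrounding text. The only point demanding care is the bookkeeping in the (L2) expansion: one must rewrite the intermediate expression consistently in terms of $A'_{i,l}$ and $J_v-A'_{i,l}$ so that the coefficients $\sigma'$ and $\tau'$ are extracted correctly, and one must confirm that $A_{i,j}J_v=J_vA_{j,l}=kJ_v$ is legitimately invoked for each relevant incidence matrix.
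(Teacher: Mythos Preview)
Your proposal is correct and follows essentially the same approach as the paper: the paper simply records the complement identity for (L1) in Section~\ref{sec:gdd} and the displayed computation $A'_{i,j}A'_{j,l}=(v-2k+\tau)A'_{i,l}+(v-2k+\sigma)(J_v-A'_{i,l})$ immediately before the lemma, and your write-up is exactly a careful expansion of these two facts together with the obvious transpose check.
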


We will now find the formulas for $\sigma$ and $\tau$ and other parameters in terms of $k,m,n$.  
The following lemma shows a necessary condition for the existence of a linked system of symmetric group divisible designs. 
According to the partition appearing in the condition (ii) of the definition of group divisible designs, $A_{i,j}$ has a block structure whose block size is $n\times n$. 
The condition (ii) in Lemma~\ref{lem:lsgdd} below shows that each block must have constant row and column sums. 
\begin{lemma}\label{lem:lsgdd}
Let $A_{i,j}$ {\upshape (}$i,j\in\{1,\ldots,f\},i\neq j,f\geq3${\upshape )} be a linked system of symmetric group divisible designs with parameters $(v,k,m,n,\lambda_1,\lambda_2)$. 
Then $k^2=\sigma k+\tau(v-k)$, and 
\begin{align}
(\lambda_1-\lambda_2)A_{i,j}(I_m\otimes J_n)&=(\lambda_1-\lambda_2)(I_m\otimes J_n)A_{i,j}\nonumber \\ 
&=((\sigma-\tau)^2-k+\lambda_1) A_{i,j}+((\sigma-\tau+k)\tau-k\lambda_2) J_v. \label{eq:block}
\end{align}
Furthermore, the following hold. 
\begin{enumerate}
\item If the system is improper, then $(\sigma-\tau)^2-k+\lambda=(\sigma-\tau+k)\tau-k\lambda=0$ holds. 
\item If it is proper, then the following hold: 
\begin{enumerate} 
\item $(\sigma-\tau)^2-k+\lambda_1=0$,
\item $A_{i,j}(I_m\otimes J_n)=(I_m\otimes J_n)A_{i,j}=\alpha J_v$ for any distinct $i,j\in\{1,\ldots,f\}$ with $\alpha:=\frac{1}{\lambda_1-\lambda_2}((\sigma-\tau+k)\tau-k\lambda_2)$.
\end{enumerate}
\end{enumerate}
\end{lemma}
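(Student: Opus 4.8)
The plan is to derive everything from the defining relations (L1) and (L2) together with associativity of matrix multiplication, invoking Bose's Lemma~\ref{lem:qutient} only to settle the delicate branch in the proper case. For the first identity $k^2=\sigma k+\tau(v-k)$, I would right-multiply (L2) by $J_v$: since $A_{i,l}J_v=kJ_v$ and $J_vJ_v=vJ_v$, the left side becomes $A_{i,j}A_{j,l}J_v=k^2J_v$ while the right side becomes $(\sigma k+\tau(v-k))J_v$, and comparing the two yields the claim.

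For \eqref{eq:block} the idea is to evaluate the triple product $A_{i,j}A_{j,l}A_{l,j}$ for distinct $i,j,l$ (available because $f\geq 3$) in two ways. Grouping the last two factors and using $A_{l,j}=A_{j,l}^\top$, so that $A_{j,l}A_{l,j}$ is governed by (L1), I get $A_{i,j}\bigl((k-\lambda_1)I_v+(\lambda_1-\lambda_2)(I_m\otimes J_n)+\lambda_2 J_v\bigr)$, which produces the desired term $(\lambda_1-\lambda_2)A_{i,j}(I_m\otimes J_n)$ alongside multiples of $A_{i,j}$ and $J_v$. Grouping instead the first two factors and applying (L2) twice, first to $A_{i,j}A_{j,l}$ and then to $A_{i,l}A_{l,j}$, gives $(\sigma-\tau)^2A_{i,j}+\tau(\sigma-\tau+k)J_v$. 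Equating the two expressions and solving for $(\lambda_1-\lambda_2)A_{i,j}(I_m\otimes J_n)$ produces \eqref{eq:block}; the companion identity for $(I_m\otimes J_n)A_{i,j}$ follows symmetrically by evaluating $(A_{i,l}A_{l,i})A_{i,j}$ two ways.

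The case analysis then rests on the linear independence of $A_{i,j}$ and $J_v$, which holds because $0<k<v$ forces $A_{i,j}$ to have both $0$ and $1$ entries. In the improper case the left side of \eqref{eq:block} vanishes, so both coefficients must be zero, giving (i). In the proper case, writing $P=(\sigma-\tau)^2-k+\lambda_1$ and $Q=(\sigma-\tau+k)\tau-k\lambda_2$ for the two coefficients, I would right-multiply \eqref{eq:block} by $I_m\otimes J_n$ and simplify using $(I_m\otimes J_n)^2=n(I_m\otimes J_n)$ and $J_v(I_m\otimes J_n)=nJ_v$; re-substituting \eqref{eq:block} and comparing coefficients of $A_{i,j}$ and $J_v$ yields $P\bigl(P-n(\lambda_1-\lambda_2)\bigr)=0$ and $PQ=0$. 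The branch $P=0$ immediately gives (ii)(a), after which \eqref{eq:block} reads $(\lambda_1-\lambda_2)A_{i,j}(I_m\otimes J_n)=QJ_v$, that is $A_{i,j}(I_m\otimes J_n)=\alpha J_v$ with $\alpha=\frac{Q}{\lambda_1-\lambda_2}$, and the companion half of \eqref{eq:block} gives $(I_m\otimes J_n)A_{i,j}=\alpha J_v$ in the same way, which is (ii)(b).

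The main obstacle is to discard the surviving branch $P=n(\lambda_1-\lambda_2)$ (which forces $Q=0$). Here \eqref{eq:block} collapses to $A_{i,j}(I_m\otimes J_n)=nA_{i,j}$; since each entry on the left is a within-class block-row sum lying in $\{0,\dots,n\}$ while the right has entries in $\{0,n\}$, every $n\times n$ block of $A_{i,j}$ must be either $O_n$ or $J_n$, equivalently $k=\lambda_1$. Such an $A_{i,j}$ has $n$ identical columns in each class, i.e.\ repeated blocks, which is excluded for a proper symmetric group divisible design, so this branch cannot occur and $P=0$. Alternatively, and more in the spirit of the preceding lemma, I would reach the same conclusion by right-multiplying \eqref{eq:block} by $A_{i,j}^\top=A_{j,i}$, evaluating the two sides with (L1) and Bose's Lemma~\ref{lem:qutient}, and comparing the coefficient of $I_v$, which gives $P(k-\lambda_1)=0$ and hence $P=0$ once the degenerate possibility $k=\lambda_1$ is ruled out.
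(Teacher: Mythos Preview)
Your derivation of $k^2=\sigma k+\tau(v-k)$ and of \eqref{eq:block} via the triple product $A_{i,j}A_{j,l}A_{l,j}$ is exactly the paper's argument (they phrase the first step with the all-ones vector rather than $J_v$, but that is cosmetic), and your treatment of the improper case (i) is identical.

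The genuine divergence is in part (ii)(a). The paper does not multiply \eqref{eq:block} by $I_m\otimes J_n$ at all; instead it observes directly that every block-row of the left-hand side $(\lambda_1-\lambda_2)A_{i,j}(I_m\otimes J_n)$ is constant, so the same must be true of $P A_{i,j}+Q J_v$, and reads off $P=0$ from a block-row of $A_{i,j}$ that is not constant. This is quicker than your route through $P\bigl(P-n(\lambda_1-\lambda_2)\bigr)=0$ and $PQ=0$, but note that the paper's short argument and your longer one are in fact equivalent in strength: both leave exactly the degenerate branch in which every $n\times n$ block of $A_{i,j}$ is $O_n$ or $J_n$ (equivalently $k=\lambda_1$). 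You are more careful than the paper in flagging this branch explicitly; the paper's one-line argument tacitly assumes some block-row contains both a $0$ and a $1$. Your dismissal of that branch (``repeated blocks, which is excluded for a proper symmetric group divisible design'') is the point where you lean on a convention that the paper does not state, so if you want a self-contained argument you might instead note that $k=\lambda_1$ forces $A_{i,j}=B_{i,j}\otimes J_n$ and hence the linked system is an inflation of a linked system of \emph{symmetric designs}, landing you back in the improper situation the lemma has already disposed of.
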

\begin{proof}
Let $i,j,l$ be distinct elements in $\{1,\ldots,f\}$. 
Postmultiplying th equation in (L2) by
the all-ones column vector $\bm{1}$, 
we have  $k^2\bm{1}=(\sigma k+\tau(v-k))\bm{1}$. 

Calculate $(A_{i,j}A_{j,l})A_{l,j}=A_{i,j}(A_{j,l}A_{l,j})$ in two ways as follows.
On the one hand, 
\begin{align*}
(A_{i,j}A_{j,l})A_{l,j}&=(\sigma A_{i,l}+\tau (J_v-A_{i,l}))A_{l,j}\\
&=(\sigma-\tau) A_{i,l}A_{l,j}+\tau J_vA_{l,j}\\
&=(\sigma-\tau)(\sigma A_{i,j}+\tau(J_v-A_{i,j}))+\tau k J_v\\
&=(\sigma-\tau)^2 A_{i,j}+(\sigma-\tau+k)\tau J_v, 
\end{align*} 
and on the other hand 
\begin{align*}
A_{i,j}(A_{j,l}A_{l,j})&=A_{i,j}(k I_v+\lambda_1(I_m\otimes J_n-I_v)+\lambda_2(J_v-I_m\otimes J_n))\\
&=(k-\lambda_1) A_{i,j}+(\lambda_1-\lambda_2)A_{i,j}(I_m\otimes J_n)+k\lambda_2J_v.
\end{align*} 
Thus we have the formula for $A_{i,j}(I_m\otimes J_n)$ in Eq.\eqref{eq:block}. 
Similarly we have the formula for $(I_m\otimes J_n) A_{i,j}$. 

(i): By $\lambda_1=\lambda_2=\lambda$ and Eq.\eqref{eq:block}, we have $((\sigma-\tau)^2-k+\lambda) A_{i,j}+((\sigma-\tau+k)\tau-k\lambda) J_v=O_v$. 
Since $A_{i,j}$ is neither the zero matrix nor the all-ones matrix if $0<k<v$, it follows that $(\sigma-\tau)^2-k+\lambda=(\sigma-\tau+k)\tau-k\lambda=0$.

(ii): Since $k<v$, there exists a block of $A_{i,j}$, say $(i,j)$-block, which contains $0$. The row containing $0$ in the $(i,j)$-block in the left hand side of Eq.\eqref{eq:block} is a constant row. 
Thus $(\sigma-\tau)^2-k+\lambda_1=0$ holds, that is, (a) is shown.  
From (a), (b) readily follows. 
\end{proof}

By Lemma~\ref{lem:lsgdd}, $\sigma,\tau$ have the following forms
\begin{align}\label{eq:st}
(\sigma,\tau)=
\begin{cases}
\left(\frac{k^2\lambda\pm (k-\lambda)(\lambda-1)\sqrt{k-\lambda}}{k^2-k+1},\frac{k\lambda(k\mp \sqrt{k-\lambda})}{k^2-k+1}\right) & \text{ if }\lambda_1=\lambda_2,\\
\left(\frac{k^2\pm(v-k)\sqrt{k-\lambda_1}}{v},\frac{k(k\mp \sqrt{k-\lambda_1})}{v}\right) & \text{ if }\lambda_1\neq\lambda_2.
\end{cases}
\end{align}
Let $\sigma',\tau'$ be the parameters of the complements that play the same role of $\sigma,\tau$ with respect to $A_{i,j}'$.  
By Lemma~\ref{lem:comp}, $\sigma'-\tau'=-(\sigma-\tau)=-\sqrt{k-\lambda_1}$. 
Thus taking the complements if necessarily, we may always assume that $\sigma-\tau>0$. 

\begin{theorem}\label{thm:l10}
If there exists a linked system of symmetric group divisible designs $A_{i,j}$ {\upshape(}$i,j\in\{1,\ldots,f\}$, $i\neq j$, $f\geq 3${\upshape)} with parameters $(v,k,m,n,\lambda_1,\lambda_2)$, then $(k-\lambda_1)+n(\lambda_1-\lambda_2)=0$ holds.
\end{theorem}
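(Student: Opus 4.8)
The plan is to apply Bose's identity (Lemma~\ref{lem:qutient}) to a single incidence matrix $A_{i,j}$ of the system and then feed in the extra block-regularity that the linking condition forces. Since the designs are proper ($\lambda_1\neq\lambda_2$) by our standing convention, Lemma~\ref{lem:lsgdd}(ii)(b) applies and gives $A_{i,j}(I_m\otimes J_n)=\alpha J_v$ for some scalar $\alpha$; this is the crucial input that an isolated symmetric group divisible design does not possess, and it is precisely where the linked structure (L2) enters the argument.

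First I would record that, because $A_{i,j}$ is the incidence matrix of a symmetric group divisible design, Lemma~\ref{lem:qutient} yields
\begin{align*}
(\lambda_1-\lambda_2)A_{i,j}(I_m\otimes J_n)A_{i,j}^\top=(\lambda_1-\lambda_2)\bigl((n(\lambda_1-\lambda_2)+k-\lambda_1)I_m\otimes J_n+n\lambda_2 J_v\bigr).
\end{align*}
Next I would simplify the left-hand side using Lemma~\ref{lem:lsgdd}(ii)(b): substituting $A_{i,j}(I_m\otimes J_n)=\alpha J_v$ and then $J_vA_{i,j}^\top=kJ_v$ collapses it to $(\lambda_1-\lambda_2)\alpha k J_v$. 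Cancelling the nonzero factor $\lambda_1-\lambda_2$ gives
\begin{align*}
\alpha k J_v=(n(\lambda_1-\lambda_2)+k-\lambda_1)(I_m\otimes J_n)+n\lambda_2 J_v,
\end{align*}
which I would rearrange to $(n(\lambda_1-\lambda_2)+k-\lambda_1)(I_m\otimes J_n)=(\alpha k-n\lambda_2)J_v$.

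The conclusion then follows by comparing the two sides on an off-diagonal $n\times n$ block: since $m\geq2$, the matrices $I_m\otimes J_n$ and $J_v$ are linearly independent (an off-diagonal block of $I_m\otimes J_n$ is $O_n$ while that of $J_v$ is $J_n\neq O_n$), so the coefficient of $I_m\otimes J_n$ must vanish, i.e.\ $(k-\lambda_1)+n(\lambda_1-\lambda_2)=0$. I do not expect a serious obstacle here; the only points demanding care are checking that the hypotheses of Lemma~\ref{lem:lsgdd}(ii) are in force (properness and $f\geq3$) and invoking linear independence of $I_m\otimes J_n$ and $J_v$ rather than merely matching scalars. Notably, the explicit value $\alpha=k/m$ is never needed, although one can verify it is forced by combining the vanishing of the $J_v$-coefficient with \eqref{eq:p1}.
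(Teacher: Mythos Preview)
Your proposal is correct and follows essentially the same route as the paper: compute $A_{i,j}(I_m\otimes J_n)A_{i,j}^\top$ in two ways via Lemma~\ref{lem:qutient} and Lemma~\ref{lem:lsgdd}(ii)(b), arrive at $\alpha k J_v=(n(\lambda_1-\lambda_2)+k-\lambda_1)I_m\otimes J_n+n\lambda_2 J_v$, and read off the vanishing coefficient of $I_m\otimes J_n$. Your write-up is in fact more explicit than the paper's about the cancellation of $\lambda_1-\lambda_2$ and the linear-independence step, but the argument is the same.
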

\begin{proof}
Calculating $A_{i,j}(I_m\otimes J_n)A_{i,j}^\top$ in two ways by Lemma~\ref{lem:qutient} and Lemma~\ref{lem:lsgdd}(ii) (b) yields 
\begin{align*}
\alpha k J_v=(n(\lambda_1-\lambda_2)+k-\lambda_1)I_m\otimes J_n+n\lambda_2 J_v.
\end{align*} 
Thus $n(\lambda_1-\lambda_2)+k-\lambda_1=0$ holds as desired.  
\end{proof}

To summarize,  the parameters $(v,k,m,n,\lambda_1,\lambda_2)$ with $\sigma,\tau$ and $\alpha$ are all expressed by three parameters. 
\begin{proposition}\label{prop:p}
Let $(v,k,m,n,\lambda_1,\lambda_2)$ be the parameters of a linked system of symmetric group divisible designs with $f\geq 3$. 
Then 
\begin{align*}
v&= m n, \quad \lambda_1=\frac{k(k-m)}{m(n-1)}, \quad \lambda_2=\frac{k^2}{m n},\quad \alpha=\frac{k}{m},\\
\sigma&=\frac{1}{m n}\left(k^2\pm(m n-k)\sqrt{\frac{k(m n-k)}{m(n-1)}} \right),\quad \tau=\frac{k}{m n}\left(k\mp\sqrt{\frac{k(m n-k)}{m(n-1)}}\right). 
\end{align*}
\end{proposition}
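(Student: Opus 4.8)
The plan is to read off the claimed identities as the solution of the small linear system already supplied by the earlier results, and then feed those values into the formulas previously recorded for $\sigma$ and $\tau$. The equality $v=mn$ is immediate from the definition of a group divisible design, whose point set is partitioned into $m$ classes of size $n$, so the real content is the determination of $\lambda_1,\lambda_2,\alpha,\sigma,\tau$ in terms of $k,m,n$.

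First I would pin down $\lambda_1$ and $\lambda_2$. Theorem~\ref{thm:l10} gives $(k-\lambda_1)+n(\lambda_1-\lambda_2)=0$, which rearranges to $k+\lambda_1(n-1)=n\lambda_2$. Substituting this into Eq.\eqref{eq:p1}, namely $k^2=k+\lambda_1(n-1)+\lambda_2(v-n)$ with $v=mn$, collapses the right-hand side to $mn\lambda_2$, so that $\lambda_2=k^2/(mn)$. Back-substitution then yields $\lambda_1(n-1)=n\lambda_2-k=k(k-m)/m$, whence $\lambda_1=k(k-m)/(m(n-1))$; here $n\geq 2$ guarantees the denominator is nonzero.

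Next I would obtain $\alpha$. From Lemma~\ref{lem:lsgdd}(ii)(b) we have $A_{i,j}(I_m\otimes J_n)=\alpha J_v$ for any distinct $i,j$; applying both sides to the all-ones column vector $\bm{1}$ and using $(I_m\otimes J_n)\bm{1}=n\bm{1}$, $A_{i,j}\bm{1}=k\bm{1}$, and $J_v\bm{1}=v\bm{1}=mn\bm{1}$ gives $nk=\alpha mn$, so $\alpha=k/m$. One could equally extract $\alpha$ from its defining expression in Lemma~\ref{lem:lsgdd}, but the row-sum computation is cleaner and avoids the division by $\lambda_1-\lambda_2$.

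Finally, for $\sigma$ and $\tau$ I would simply substitute the values just found into the proper-case branch of \eqref{eq:st}. The only quantity left to compute is the radicand $k-\lambda_1$; using the expression for $\lambda_1$ one finds $k-\lambda_1=k(mn-k)/(m(n-1))$, and inserting this together with $v=mn$ and $v-k=mn-k$ into \eqref{eq:st} reproduces the stated forms of $\sigma$ and $\tau$. No step presents a genuine conceptual obstacle; the only points requiring care are confirming that we are in the proper branch $\lambda_1\neq\lambda_2$ of \eqref{eq:st}, which is guaranteed by the standing convention that the designs are proper, and keeping the sign choices consistent so that the $\pm/\mp$ pattern agrees with the normalization $\sigma-\tau=\sqrt{k-\lambda_1}>0$ adopted just after \eqref{eq:st}.
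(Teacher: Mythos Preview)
Your proposal is correct and follows essentially the same route as the paper: combine Eq.~\eqref{eq:p1} with Theorem~\ref{thm:l10} to solve for $\lambda_1,\lambda_2$, then substitute into the proper branch of \eqref{eq:st} for $\sigma,\tau$. Your explicit row-sum derivation of $\alpha=k/m$ from Lemma~\ref{lem:lsgdd}(ii)(b) is a clean addition that the paper's terse proof leaves implicit.
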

\begin{proof}
By definition of group divisible designs, $v=m n$. 
Combining Eq.\eqref{eq:p1} and Theorem~\ref{thm:l10}, we obtain the formula for $\lambda_1,\lambda_2$. 
Then substituting these into Eq.\eqref{eq:st},  $\sigma$ and $\tau$ are expressed by $k,m,n$ as desired. 
\end{proof}
Note that $\sigma=\tau$ if and only if $k=m n$, which is not allowed.

We call $(v,k,m,n,\lambda_1,\lambda_2)$ \emph{feasible parameters} for linked systems of symmetric group divisible designs if  they satisfy the equations in Proposition~\ref{prop:p}. 
We list feasible parameters of linked systems of symmetric group divisible designs with $v<200$ in Table~\ref{Tab:Par}. 
The parameters $(v,k,m,n,\lambda_1,\lambda_2)$ of symmetric group divisible designs such that either $\sigma$ or $\tau$ is not an integer and that $k/m$ is an integer are listed in Table~\ref{Tab:Par2}. 
Note that the condition $k/m$ being an integer is a necessary condition for $A(I_m\otimes J_n)=(I_m\otimes J_n)A$ being a multiple of $J_v$. 


\section{Linked systems of symmetric group divisible designs and association schemes}\label{sec:lgddas}
In this section, we show an equivalence between linked systems of symmetric group divisible designs and some $4$-class association schemes. 
As an application, we are able to provide an upper bound on the number of linked systems of symmetric group divisible designs. 

Set 
\begin{align}
A_0&=I_{fv}, \quad A_1=I_{fm}\otimes (J_n-I_{n}), \label{eq:am1}\\
A_2&=\begin{pmatrix} 
O_v & A_{1,2} & \cdots & A_{1,f} \\
A_{2,1} & O_v  & \cdots & A_{2,f} \\
\vdots & \vdots & \ddots & \vdots \\
A_{f,1} & A_{f,2}  & \cdots & O_v
\end{pmatrix}, 
A_3=(J_f-I_f)\otimes J_v-A_2, \label{eq:am2}\\
A_4&=I_f\otimes J_{mn}-I_{fm}\otimes J_{n}.\label{eq:am3}
\end{align}
 
\begin{theorem}\label{thm:aslgdd0}
Assume that a linked system of symmetric group divisible designs  $A_{i,j}$ ($i,j\in\{1,\ldots,f\},i\neq j,f\geq2$) satisfies that 
\begin{align}\label{eq:bsum}
A_{i,j}(I_m\otimes J_n)=(I_m\otimes J_n)A_{i,j}=\frac{k}{m} J_v
\end{align}
for any distinct $i,j\in\{1,\ldots,f\}$.
Then the set of matrices $\{A_0,A_1,A_2,A_3,A_4\}$ forms a $4$-class symmetric association scheme which is uniform, imprimitive with respect to the equivalence relation defined by $\mathcal{I}=\{0,1,4\}$, and 
each equivalence class relative to $\mathcal{I}$ induces a $2$-class imprimitive association scheme. 
\end{theorem}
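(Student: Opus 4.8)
The plan is to verify the three defining axioms of a symmetric association scheme directly for the set $\{A_0,A_1,A_2,A_3,A_4\}$, and then to identify the imprimitivity and uniformity structure. Axioms (i) and (ii) are immediate: $A_0=I_{fv}$ by definition, and one checks that $A_0+A_1+A_2+A_3+A_4=J_{fv}$ by noting that $A_0+A_1+A_4=I_{fm}\otimes J_n + (I_f\otimes J_{mn}-I_{fm}\otimes J_n)=I_f\otimes J_{mn}=I_f\otimes J_v$, while $A_2+A_3=(J_f-I_f)\otimes J_v$; summing gives $J_f\otimes J_v=J_{fv}$. Each $A_i$ is symmetric because $A_0,A_1,A_4$ are visibly symmetric and $A_2^\top=A_2$ follows from $A_{i,j}^\top=A_{j,i}$, whence $A_3^\top=A_3$ as well.

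The substance is axiom (iii), that each product $A_iA_j$ lies in the span of $\{A_0,\ldots,A_4\}$ with nonnegative integer coefficients. The key idea is to exploit the block structure. The matrices $A_0,A_1,A_4$ are block-diagonal in the $f$ fibers and built only from $I,\ I_m\otimes J_n,\ J$ on each fiber, so products among them reduce to the $2$-class association scheme relations on a single fiber (the relations $I$, ``same group,'' ``different group''). The genuinely new computations involve $A_2$. For $A_2^2$, the $(i,l)$-block with $i\neq l$ is $\sum_{j\neq i,l}A_{i,j}A_{j,l}$, which by (L2) equals $\sum_{j}(\sigma A_{i,l}+\tau(J_v-A_{i,l}))$, a combination of $A_{i,l}$ and $J_v$; the diagonal blocks give $\sum_{j\neq i}A_{i,j}A_{j,i}=(f-1)A_{i,j}A_{i,j}^\top$, which by (L1) is a combination of $I_v$, $I_m\otimes J_n$, and $J_v$. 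Using hypothesis \eqref{eq:bsum} to rewrite any stray $A_{i,l}(I_m\otimes J_n)$ terms as multiples of $J_v$, and translating $I_v,\ I_m\otimes J_n,\ J_v,\ A_{i,l},\ J_v-A_{i,l}$ back into $A_0,A_1,A_4,A_2,A_3$, expresses $A_2^2$ in the algebra. The products $A_1A_2$ and $A_4A_2$ are where \eqref{eq:bsum} is indispensable: $A_4$ restricted to a fiber is $J_v-I_m\otimes J_n$, and $A_1$ involves $I_m\otimes J_n-I_v$, so these products feed exactly the terms $A_{i,j}(I_m\otimes J_n)$ that \eqref{eq:bsum} collapses to $(k/m)J_v$. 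Since $A_3=(J_f-I_f)\otimes J_v-A_2$ is an affine function of $A_2$ over the block-diagonal part of the algebra, every product involving $A_3$ is determined once the products with $A_2$ and with $A_0,A_1,A_4$ are known.

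I would organize the computation as a finite table of the products $A_iA_j$ for $i\le j$, reducing each to the five-dimensional space; closure and commutativity then follow, and the nonnegativity and integrality of the intersection numbers can be read off using Proposition~\ref{prop:p} and Eq.~\eqref{eq:st}, which guarantee the relevant parameters are the intended nonnegative integers. For the structural claims: the relation $\mathcal{I}=\{0,1,4\}$ satisfies $A_0+A_1+A_4=I_f\otimes J_v$, exhibiting $p=f$ fibers of size $q=v$, so the scheme is imprimitive with quotient of class $1$ (the quotient relation being $(J_f-I_f)\otimes J_v$, i.e.\ a single nontrivial class, since $A_2+A_3$ fills all off-diagonal blocks). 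Each equivalence class relative to $\mathcal{I}$ is a single fiber carrying the matrices $I_v,\ I_m\otimes J_n-I_v,\ J_v-I_m\otimes J_n$, which is the imprimitive $2$-class association scheme of the group-divisible partition (a disjoint union of $m$ complete graphs $K_n$). Uniformity is checked from the definition: on the restricted blocks $A_i^{UV}$ the product $A_2^{UV}A_2^{VW}$ is governed by (L2), whose coefficients $\sigma,\tau$ are independent of the chosen fibers $U,V,W$, giving the required fiber-independent constants $a_{i,j}^k$.

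The main obstacle will be the bookkeeping in $A_2^2$ and $A_2A_3$: keeping the diagonal-block contributions (which produce $A_0,A_1,A_4$ via (L1)) cleanly separated from the off-diagonal contributions (which produce $A_2,A_3$ via (L2)), and verifying that all cross terms $A_{i,l}(I_m\otimes J_n)$ are eliminated by \eqref{eq:bsum} so that no expression outside the span survives. This is precisely the place where the hypothesis \eqref{eq:bsum} does the essential work, and it is why the theorem is stated with that assumption rather than for a general linked system.
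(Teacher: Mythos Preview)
Your proposal is correct and follows essentially the same approach as the paper: both verify the association-scheme axioms by checking closure of the span under multiplication, handling the diagonal-block products via (L1), the off-diagonal-block products via (L2), and invoking hypothesis~\eqref{eq:bsum} exactly for the products mixing $A_2$ with $A_1$ or $A_4$ (equivalently, the products $A_2(I_{fm}\otimes J_n)$ and $A_2(I_f\otimes J_{mn})$). Your block-by-block bookkeeping is slightly more explicit than the paper's terse argument, and your identification $A_0+A_1+A_4=I_f\otimes J_v$ with $f$ fibers of size $v$ is the right reading of the imprimitivity structure; the remark that nonnegativity and integrality of the $p_{i,j}^k$ need to be ``read off'' is unnecessary, since once the $A_i$ are $(0,1)$-matrices with disjoint supports summing to $J$ and closure holds, the intersection numbers are automatically nonnegative integers by their combinatorial meaning.
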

\begin{proof}
First we show that $\{A_0,A_1,A_2,A_3,A_4\}$ forms a $4$-class symmetric association scheme. 
Let $\mathcal{A}$ be the vector space spanned by $\{A_0,A_1,A_2,A_3,A_4\}$ over $\mathbb{R}$. 
It is clear that $\sum_{i=0}^4A_i=J_{fv}$ and each $A_i$ is a symmetric $(0,1)$-matrix. 
We are now left to show that $\mathcal{A}$ is closed under 
matrix multiplication. 
We will show that  for each $i,j$, 
\begin{align}\label{eq:lsgdd}
A_iA_j\in\mathcal{A}.
\end{align}

\eqref{eq:lsgdd} is obvious for $i,j\in\{1,4\}$.   
Each $A_{i,j}$ has constant row and column sums.
Thus it holds that $A_2(I_{fm}\otimes J_n),(I_{fm}\otimes J_n)A_2\in\mathcal{A}$, from which 
\eqref{eq:lsgdd} follows for $i,j\in\{2,3\}$. 
Finally, by assuming Eq. \eqref{eq:bsum}, it holds that $A_2(I_f\otimes J_{mn}),(I_f\otimes J_{mn})A_2\in\mathcal{A}$. 
From these, 
\eqref{eq:lsgdd} follows for $(i,j)\in\{(2,4),(3,4),(4,2),(4,3)\}$.  
This completes the proof for $\{A_0,A_1,A_2,A_3,A_4\}$ to form a $4$-class  symmetric association scheme. 

The association scheme is clearly uniform.
Since $A_0+A_1+A_4=I_{fm}\otimes J_n$, $R_0\cup R_1 \cup R_4$ is a system of imprimitivity of the association scheme. 
From the form of $A_1$, the $(0,1)$-matrix obtained by restricting $A_1$ to each equivalence class relative to $\mathcal{I}$ is the adjacency matrix of an imprimitive strongly regular graph. 
This completes the proof. 
\end{proof}
\begin{remark}
By Lemma~\ref{lem:lsgdd}, the assumption Eq.\eqref{eq:bsum} in Theorem~\ref{thm:aslgdd0} is always satisfied for the case that $f\geq 3$  and the symmetric group divisible designs are proper.  
\end{remark}

The intersection numbers and the eigenmatrices are given next.
The intersection numbers are routinely calculated by the proof of Theorem~\ref{thm:aslgdd0}. 
Here we make no use of Proposition~\ref{prop:p} to denote the intersection matrices. 
Krein parameters are shown in the Appendix.
\begin{align*}
B_1&=\left(\begin{smallmatrix}
0 & 1 & 0 & 0 & 0 \\
n-1 & n-2 & 0 & 0 & 0 \\
0 & 0 & \alpha-1 & \alpha & 0 \\
0 & 0 & n-\alpha & n-\alpha-1 & 0 \\
0 & 0 & 0 & 0 & n-1 
\end{smallmatrix}\right), \displaybreak[0]\\
B_2&=\left(\begin{smallmatrix}
0 & 0 & 1 & 0 & 0 \\
0 & 0 & \alpha-1 & \alpha & 0 \\
(f-1)k & (f-1)\lambda_1 & (f-2)\sigma & (f-2)\tau & (f-1)\lambda_2 \\
0 & (f-1)(k-\lambda_1) & (f-2)(k-\sigma) & (f-2)(k-\tau) & (f-1)(k-\lambda_2) \\
0 & 0 & k-\alpha & k-\alpha & 0 
\end{smallmatrix}\right), \displaybreak[0]\\
B_3&=\left(\begin{smallmatrix}
0 & 0 & 0 & 1 & 0 \\
0 & 0 & n-\alpha & n-\alpha-1 & 0 \\
0 & (f-1)(k-\lambda_1) & (f-2)(k-\sigma) & (f-2)(k-\tau) & (f-1)(k-\lambda_2) \\
(f-1)(v-k) & (f-1)(v-2k+\lambda_1) & (f-2)(v-2k+\sigma) & (f-2)(v-2k+\tau) & (f-1)(v-2k+\lambda_2) \\
0 & 0 & v-k-n+\alpha & v-k-n+\alpha & 0 
\end{smallmatrix}\right), \displaybreak[0]\\ 
B_4&=\left(\begin{smallmatrix}
0 & 0 & 0 & 0 & 1 \\
0 & 0 & 0 & 0 & n-1 \\
0 & 0 & k-\alpha & k-\alpha & 0 \\
0 & 0 & v-k-n+\alpha & v-k-n+\alpha & 0 \\
n(m-1) & n(m-1) & 0 & 0 & n(m-2) 
\end{smallmatrix}\right), \displaybreak[0]\\
P&=\left(\begin{smallmatrix}
1 & n-1 & (f-1)k & (f-1)(mn-k) & (m-1)n \\
1 & -1 & (f-1)\sqrt{\frac{k(mn-k)}{m(n-1)}} & -(f-1)\sqrt{\frac{k(mn-k)}{m(n-1)}} & 0 \\
1 & n-1 & 0 & 0 & -n \\
1 & -1 & -\sqrt{\frac{k(mn-k)}{m(n-1)}} & \sqrt{\frac{k(mn-k)}{m(n-1)}} & 0 \\
1 & n-1 & -k & -m n+k & (m-1)n 
\end{smallmatrix}\right), \\
Q&=\left(\begin{smallmatrix}
1 & m(n-1) & f(m-1) & (f-1)m(n-1) & f-1 \\
1 & -m & f(m-1) & -(f-1)m & f-1 \\
1 & \sqrt{\frac{m(n-1)(mn-k)}{k}} & 0 & -\sqrt{\frac{m(n-1)(mn-k)}{k}} & -1 \\
1 & -\sqrt{\frac{km(n-1)}{mn-k}} & 0 & \sqrt{\frac{km(n-1)}{mn-k}} & -1 \\
1 & 0 & -f & 0 & f-1 
\end{smallmatrix}\right).
\end{align*}

Next we characterize  linked systems of symmetric group divisible designs in terms of the imprimitivity of the association scheme. 
\begin{theorem}\label{thm:aslgdd}
Let $(X,\{R_i\}_{i=0}^4)$ be a symmetric association scheme which is uniform, imprimitive with respect to the equivalence relation  defined by $\mathcal{I}=\{0,1,4\}$, and each equivalence class relative to $\mathcal{I}$ is an imprimitive strongly regular graph with disjoint union of $m$ copies of a complete graphs of size $n$.  
Then there exists a linked system of symmetric group divisible designs $A_{i,j}$ {\upshape (}$i,j\in\{1,\ldots,f\},i\neq j${\upshape )} with parameters $(m n,\frac{1}{f-1}p_{2,2}^0,m,n,\frac{1}{f-1}p_{2,2}^1,\frac{1}{f-1}p_{2,2}^4)$ such that $A_{i,j}(I_m\otimes J_n)=(I_m\otimes J_n)A_{i,j}=\frac{p_{2,2}^0}{(f-1)m}J_v$, where $f$ is the number of equivalence classes with respect to $\mathcal{I}$.
\end{theorem}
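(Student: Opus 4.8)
The plan is to manufacture the designs $A_{i,j}$ from the off-diagonal blocks of $A_2$, to verify (L1) and (L2) using the uniformity of the scheme, to read off the parameters from the intersection numbers $p_{2,2}^c$, and finally to establish the block-sum condition; the last step is where the work lies.

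First I would fix the fibers. Since $\mathcal{I}=\{0,1,4\}$ is the system of imprimitivity, $A_0+A_1+A_4=I_f\otimes J_{mn}$, so $X$ partitions into fibers $\Omega_1,\dots,\Omega_f$ of size $mn$, and by hypothesis $A_1$ restricts on each fiber to $I_m\otimes(J_n-I_n)$ and $A_4$ to $J_{mn}-I_m\otimes J_n$. For distinct $i,j$ I set $A_{i,j}:=A_2^{\Omega_i\Omega_j}$, the $(\Omega_i,\Omega_j)$-block of $A_2$; this is a $(0,1)$-matrix of order $mn$ with $A_{i,j}^\top=A_{j,i}$ by symmetry of $A_2$, and since only relations $2,3$ occur between distinct fibers, $\mathcal{I}(\Omega_i,\Omega_j)=\{2,3\}$ with $A_3^{\Omega_i\Omega_j}=J_{mn}-A_{i,j}$.

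Next I would extract (L1) and (L2) from uniformity. For distinct $i,j,l$, uniformity gives $A_{i,j}A_{j,l}=A_2^{\Omega_i\Omega_j}A_2^{\Omega_j\Omega_l}=a_{2,2}^2A_2^{\Omega_i\Omega_l}+a_{2,2}^3A_3^{\Omega_i\Omega_l}$; substituting $A_3^{\Omega_i\Omega_l}=J_{mn}-A_{i,l}$ yields (L2) with $\sigma=a_{2,2}^2$, $\tau=a_{2,2}^3$. Taking the outer fiber equal to the first, $A_{i,j}A_{i,j}^\top=A_2^{\Omega_i\Omega_j}A_2^{\Omega_j\Omega_i}=a_{2,2}^0I+a_{2,2}^1(I_m\otimes(J_n-I_n))+a_{2,2}^4(J_{mn}-I_m\otimes J_n)$, which is precisely Eq.~\eqref{eq:gdd}; by the equivalence recorded just before Lemma~\ref{lem:qutient} each $A_{i,j}$ is then the incidence matrix of a symmetric group divisible design. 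To identify the parameters I compute the diagonal block of $A_2^2=\sum_c p_{2,2}^cA_c$ in two ways: summing the uniformity identity over the $f-1$ intermediate fibers gives $(A_2^2)_{\Omega_i\Omega_i}=(f-1)(a_{2,2}^0I+a_{2,2}^1A_1^{\mathrm{fib}}+a_{2,2}^4A_4^{\mathrm{fib}})$, while the global expansion gives $p_{2,2}^0I+p_{2,2}^1A_1^{\mathrm{fib}}+p_{2,2}^4A_4^{\mathrm{fib}}$ (the $A_2,A_3$ blocks on the diagonal vanish). Hence $a_{2,2}^c=p_{2,2}^c/(f-1)$ for $c\in\{0,1,4\}$, i.e. $k=\frac{1}{f-1}p_{2,2}^0$, $\lambda_1=\frac{1}{f-1}p_{2,2}^1$, $\lambda_2=\frac{1}{f-1}p_{2,2}^4$.

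The remaining block-sum condition is the main obstacle. Since $I_{fm}\otimes J_n=A_0+A_1$ lies in the Bose--Mesner algebra and commutes with $A_2$, comparing the $(\Omega_i,\Omega_j)$-blocks of $A_2(A_0+A_1)$ and $(A_0+A_1)A_2$ gives $A_{i,j}(I_m\otimes J_n)=(I_m\otimes J_n)A_{i,j}$, so every $n\times n$ block of $A_{i,j}$ has equal row and column sums; the content is that these sums are all $k/m$. Having (L1) and (L2), the $A_{i,j}$ form a linked system, so I would invoke Lemma~\ref{lem:lsgdd}(ii)(b) (proper case) to get $A_{i,j}(I_m\otimes J_n)=\alpha J_{mn}$, and fix $\alpha$ by right-multiplying with the all-ones vector: from $A_{i,j}\mathbf{1}=k\mathbf{1}$ and $(I_m\otimes J_n)\mathbf{1}=n\mathbf{1}$ one gets $nk=\alpha mn$, so $\alpha=k/m=\frac{p_{2,2}^0}{(f-1)m}$. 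A more self-contained route expands $A_1A_2=p_{1,2}^2A_2+p_{1,2}^3A_3$ (its diagonal blocks vanish because $A_1$ is fiber-block-diagonal while $A_2$ has zero diagonal blocks); the $(\Omega_i,\Omega_j)$-block then reads $(I_m\otimes J_n)A_{i,j}=(1+p_{1,2}^2-p_{1,2}^3)A_{i,j}+p_{1,2}^3J_{mn}$, so one must show $1+p_{1,2}^2-p_{1,2}^3=0$. Combinatorially $p_{1,2}^3$ and $p_{1,2}^2+1$ both count the points of a single clique of $\Omega_i$ incident with a fixed point of $\Omega_j$, and constancy of these intersection numbers forces this count to be constant; summing over the $m$ cliques of $\Omega_i$ (total incidence $k$) pins it to $k/m$. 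The genuinely delicate point is excluding the degenerate alternative in which each block of $A_{i,j}$ is all-zero or all-one (clique-incidence count in $\{0,n\}$, whence $1+p_{1,2}^2-p_{1,2}^3=n\neq0$); this is where properness, i.e. $\lambda_1\neq k$, equivalently the relation $n(\lambda_1-\lambda_2)+k-\lambda_1=0$ of Theorem~\ref{thm:l10}, must be used.
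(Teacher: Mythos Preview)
Your extraction of (L1), (L2) and the identification of the parameters with $p_{2,2}^c/(f-1)$ via the diagonal block of $A_2^2$ are correct and match the paper's argument.

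The gap is in the block-sum step. Your primary route through Lemma~\ref{lem:lsgdd}(ii)(b) is unavailable: that lemma needs three distinct indices (so $f\geq 3$, whereas the theorem covers $f=2$ as well) \emph{and} properness, which you have not established. Your secondary route is the right one and is exactly what the paper does: from $A_1A_2=p_{1,2}^2A_2+p_{1,2}^3A_3$ one reads off $A_{i,j}(I_m\otimes J_n)=aA_{i,j}+bJ_v$ with $a=1+p_{1,2}^2-p_{1,2}^3$. At this point the paper supplies a step you omit: it proves properness directly by left-multiplying by $A_{i,j}^\top$. If $\lambda_1=\lambda_2=\lambda$ then the left side $(k-\lambda)I_m\otimes J_n+n\lambda J_v$ lies in $\mathrm{span}\{I_m\otimes J_n,J_v\}$ while the right side $a(k-\lambda)I_v+(a\lambda+bk)J_v$ lies in $\mathrm{span}\{I_v,J_v\}$; since $I_v,I_m\otimes J_n,J_v$ are independent for $m,n\geq 2$ this forces $k=\lambda$, contradicting $0<k<v$. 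With properness in hand, the paper then right-multiplies by $A_{i,j}^\top$ and invokes Bose's Lemma~\ref{lem:qutient}: the left side is a combination of $I_m\otimes J_n$ and $J_v$ only, while the right side has $I_v$-coefficient $a(k-\lambda_1)$, whence $a(k-\lambda_1)=0$ and the paper concludes $a=0$.

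Your instinct about the ``degenerate alternative'' (each $n\times n$ block all-zero or all-one, equivalently $k=\lambda_1$) is sharp; note, however, that what you call ``properness'' here is really the condition $k\neq\lambda_1$, which is \emph{not} the same as $\lambda_1\neq\lambda_2$, and your appeal to Theorem~\ref{thm:l10} is circular (that theorem already assumes a linked system with $f\geq 3$, hence via Lemma~\ref{lem:lsgdd} the very block-sum condition you are after). The paper's argument, too, passes from $a(k-\lambda_1)=0$ to $a=0$ without isolating this point, so both proofs tacitly exclude the case $k=\lambda_1$.
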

\begin{proof}
Since $R_0\cup R_1 \cup R_4$ is an equivalence relation on $X$, we may write $A_0+A_1+A_4=I_{|X|/n}\otimes J_n$ for some $n$. 
Furthermore, from the fact that each equivalence class relative to $R_0\cup R_1 \cup R_4$ is a imprimitive strongly regular graph, we may write $A_1,A_4$ the same as in Eq.\eqref{eq:am1},\eqref{eq:am3} for some $f,m$ such that $|X|=fmn$.
Similarly  $A_2,A_3$ may be also written the same as in Eq.\eqref{eq:am2}.  

Let $\Omega_1,\ldots,\Omega_f$ be the equivalence classes on $X$ by the equivalence relation $R_0\cup R_1 \cup R_4$. 
For any distinct $i,j\in\{1,\ldots,f\}$, let $A_k^{i,j}$ be the matrix obtained by restricting rows and columns of $A_k$ corresponding to $\Omega_i,\Omega_j$. 
Since the association scheme is uniform, $(A_2^{i,j})^2$ is a linear combination of $A_0^{i,j},A_1^{i,j},A_4^{i,j}$. 
Comparing the principal block corresponding to $\Omega_i$ of this equality, we obtain $A_{i,j}A_{j,i}$ to be a linear combination of $I_{m n}, I_m\otimes (J_n-I_n), J_{m n}-I_m\otimes J_n$.
Thus $A_{i,j}$ is a symmetric group divisible design. 

Furthermore, $A_2^{i,j}A_1^{i,j}$ is a linear combination of $A_2^{i,j},A_3^{i,j}$, and thus $A_{i,j}I_m\otimes J_n=a A_{i,j}+b J_v$ for some $a,b$. 
So, the symmetric group divisible design is proper. Indeed, if it is improper, then the equation $A_{i,j}^\top A_{i,j}I_m\otimes J_n=a A_{i,j}^\top A_{i,j}+bA_{i,j}J_v$ leads to a contradiction.

Therefore, $A_{i,j}I_m\otimes J_n A_{i,j}^\top$ is a linear combination of $I_v$, $I_m\otimes J_n$ and $J_v$. 
Since $A_{i,j}$ is a symmetric group divisible design, by Lemma~\ref{lem:qutient} we have $a=0$. 
Thus $A_{i,j}$ has constant row sum in any block. 
Similarly, we have the same conclusion for column sums in any block.

For any mutually distinct $i,j,l\in\{1,\ldots,f\}$, let $A_k^{i,j,l}$ be the matrix obtained by restricting rows and columns of $A_k$ corresponding to $\Omega_i,\Omega_j,\Omega_l$. 
Since the association scheme is uniform, $(A_2^{i,j,l})^2$ is a linear combination of $A_2^{i,j,l},A_3^{i,j,l}$. 
Comparing the block whose row block corresponds to $\Omega_j$ and column block corresponds to $\Omega_l$  of this equality, we obtain that $A_{i,j}A_{j,i}$ is a linear combination of $A_{i,l}$ and $J_v-A_{i,l}$.
Thus $A_{i,j}$ ($i,j\in\{1,\ldots,f\}$, $i\neq j$) form a linked system of symmetric group divisible design with the desired parameters. 
\end{proof}
\begin{remark}
The case $f=2$ in Theorem~\ref{thm:aslgdd} corresponds to a (single) symmetric group divisible design with incidence matrix having constant row sums and column sums in any block. 
\end{remark}

Applying the theory of association schemes to one obtained from linked systems of symmetric group divisible designs, 
we obtain an upper bound for $f$ in the theorem below with the help of the following lemma.  
First we present a lemma on positivity of the Krein number $q_{1,1}^3$.
\begin{lemma}\label{lem:kp}
$q_{1,1}^3=0$ if and only if $n=2$ and $k=m$. 
\end{lemma}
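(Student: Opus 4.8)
The plan is to compute the Krein parameter $q_{1,1}^3$ explicitly from the second eigenmatrix $Q$ using the standard formula, and then determine exactly when it vanishes. Recall that for a symmetric association scheme the Krein parameters are given by
\[
q_{i,j}^k=\frac{m_i m_j}{|X|}\sum_{l=0}^{d}\frac{Q_{l,i}Q_{l,j}Q_{l,k}}{m_l^2},
\]
where $m_l=Q_{0,l}$ is the $l$-th multiplicity and $|X|=fmn$. Since I only need $q_{1,1}^3$, I would read off the relevant columns of the matrix $Q$ displayed just above the lemma: the entries $Q_{l,1}$, $Q_{l,1}$, and $Q_{l,3}$ for $l=0,1,2,3,4$, together with the multiplicities $m_l=Q_{0,l}$, namely $m_0=1$, $m_1=m(n-1)$, $m_2=f(m-1)$, $m_3=(f-1)m(n-1)$, $m_4=f-1$.

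First I would substitute these entries into the sum. Several terms simplify or vanish: for instance the $l=2$ and $l=4$ contributions involve the third-column entries $Q_{2,3}=0$ and $Q_{4,3}=0$, so those summands drop out entirely, leaving only the $l=0,1,3$ terms. This reduces the computation to a manageable expression in $k,m,n,f$, with the two surviving square-root entries of $Q$ (in rows $l=2,3$ of the appropriate columns) either cancelling or appearing squared so that the radicals disappear. After clearing denominators, $q_{1,1}^3$ becomes a rational expression whose numerator is a polynomial in $k,m,n$ that I can factor.

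The key step is then to identify the factored form of that numerator and read off its zero locus. Since $q_{1,1}^3\ge 0$ always holds by Proposition~\ref{prop:krein}(i), and the prefactor $m_1^2/|X|$ is strictly positive, the vanishing of $q_{1,1}^3$ is equivalent to the vanishing of this polynomial numerator. I expect the numerator to factor as a product of linear factors in which $(n-2)$ and $(k-m)$ appear, so that $q_{1,1}^3=0$ forces $n=2$ or $k=m$; I would then check that these conditions must hold \emph{simultaneously} (i.e.\ that the numerator is proportional to something like $(n-2)$ times a factor forcing $k=m$, or a single factor vanishing only when both hold), to match the stated conclusion $n=2$ \emph{and} $k=m$. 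The main obstacle will be the bookkeeping in the factorization: making sure the square-root terms combine cleanly and that the resulting polynomial really does vanish only on the locus $\{n=2,\ k=m\}$ rather than on a larger set, which may require using the parameter relations from Proposition~\ref{prop:p} (in particular $\lambda_1=k(k-m)/(m(n-1))$, which already signals that $k=m$ is the degeneracy making $\lambda_1=0$) to eliminate spurious factors and confirm that no feasible parameters outside this locus give $q_{1,1}^3=0$.
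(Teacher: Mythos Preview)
Your plan has a genuine gap: the radicals in $q_{1,1}^3$ do \emph{not} cancel. First, you misread the second eigenmatrix: $Q_{2,3}=-\sqrt{m(n-1)(mn-k)/k}\neq 0$ (it is $Q_{2,2}$ and $Q_{4,3}$ that vanish), so the $l=2$ summand survives. More importantly, once the sum is carried out correctly you obtain exactly the Appendix formula
\[
q_{1,1}^3=\frac{1}{f}\Bigl(m(n-2)-(mn-2k)\sqrt{\tfrac{m(n-1)}{k(mn-k)}}\Bigr),
\]
which is not a polynomial in $k,m,n$; there is no ``numerator to factor'' with linear factors $(n-2)$ and $(k-m)$. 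Setting this expression to zero and squaring does give a polynomial relation, but it does not factor in the way you anticipate, and you would still need an argument ruling out all feasible $(k,m,n)$ on that variety other than $(m,2,m)$.

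The paper's proof proceeds quite differently. It first invokes the integrality of $\alpha=k/m$ (Proposition~\ref{prop:p}) to write $k=ma$ with $a\in\mathbb{Z}_{\geq 1}$, which turns the expression into
\[
q_{1,1}^3=\frac{m}{f}\,g(a),\qquad g(a)=n-2-(n-2a)\sqrt{\tfrac{n-1}{ma(n-a)}}.
\]
Then it argues analytically: $g$ is increasing in $a$ on the relevant range, so its minimum over admissible $a$ is $g(1)=(n-2)\bigl(1-1/\sqrt{m}\bigr)\geq 0$ (using $m\geq 2$), with equality precisely when $n=2$; and for $a\geq 2$ one has $g(a)>g(1)\geq 0$. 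Thus $q_{1,1}^3=0$ forces $a=1$ (i.e.\ $k=m$) and $n=2$. The integrality of $k/m$ and the monotonicity step are the ideas missing from your outline.
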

\begin{proof}
The if part is trivial. For the only if part, we may set $k=ma$ for some positive integer $a$ by the integrality of $\alpha$, see Proposition~\ref{prop:p}. 
Then 
\begin{align*}
q_{1,1}^3=\frac{m}{f}\left(n-2-(n-2a)\sqrt{\frac{n-1}{m a(n-a)}}\right). 
\end{align*}
The function $g(a):=n-2-(n-2a)\sqrt{\frac{n-1}{m a(n-a)}}$ increases when $a$ increases, and takes the minimum $(n-2)(1-\frac{1}{\sqrt{m}})$ at $a=1$. 
\end{proof}

\begin{theorem}\label{thm:ub}
Let $A_{i,j}$ {\upshape (}$i,j\in\{1,\ldots,f\},i\neq j,f\geq 3${\upshape )} be a linked system of symmetric group divisible designs with parameters $(v,k,m,n,\lambda_1,\lambda_2)$. 
Then the following hold. 
\begin{enumerate}
\item If $(m,n)=(k,2)$, then $f\leq \frac{m}{2}+1$. 
\item If $(m,n)\neq(k,2)$ and $m n-2k\geq 0$, then 
\begin{align*}
f\leq \frac{(m(n-1)+2)(m(n-1)-1)}{2(m n-1)}.
\end{align*}
\item If $(m,n)\neq(k,2)$ and $m n-2k< 0$, then 
\begin{align*}
f\leq \frac{m(n-2)}{(2k-m n)\sqrt{\frac{m(n-1)}{k(m n-k)}}}.
\end{align*}
\end{enumerate}
\end{theorem}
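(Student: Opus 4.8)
The plan is to apply Theorem~\ref{thm:aslgdd0}---whose hypothesis \eqref{eq:bsum} holds automatically here, since $f\geq3$ and the designs are proper (see the Remark following that theorem)---to obtain a $4$-class symmetric association scheme, and then to read off all three bounds from the Krein conditions of Proposition~\ref{prop:krein}. The multiplicities are the first row of the second eigenmatrix $Q$:
\begin{align*}
m_0=1,\quad m_1=m(n-1),\quad m_2=f(m-1),\quad m_3=(f-1)m(n-1),\quad m_4=f-1,
\end{align*}
and $\sum_{i=0}^4 m_i=fmn=|X|$. The whole argument then turns on knowing the support $\{l:q_{1,1}^l>0\}$, which is pinned down by Lemma~\ref{lem:kp} together with the explicit Krein parameters recorded in the Appendix.

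For cases (i) and (ii) I would invoke the absolute bound $\sum_{l:\,q_{1,1}^l>0}m_l\leq\tfrac{1}{2}m_1(m_1+1)$, that is Proposition~\ref{prop:krein}(ii) with $i=j=1$. First note that the integrality of $\alpha=k/m$ (forced by \eqref{eq:bsum}) together with $\lambda_1\geq0$ and $k<v$ rules out $n=2$ unless $k=m$; hence outside case (i) one has $n\geq3$. In that range Lemma~\ref{lem:kp} gives $q_{1,1}^3>0$, while the Appendix values give $q_{1,1}^0,q_{1,1}^1,q_{1,1}^2>0$ and $q_{1,1}^4=0$; using the identity $m_0+m_1+m_2+m_3=1+(mn-1)f$, the bound reads $1+(mn-1)f\leq\tfrac{1}{2}m(n-1)\bigl(m(n-1)+1\bigr)$, which rearranges to (ii). In case (i), where $(m,n)=(k,2)$, Lemma~\ref{lem:kp} gives $q_{1,1}^3=0$ and the Appendix values additionally force $q_{1,1}^1=q_{1,1}^4=0$, so the support collapses to $\{0,2\}$; since $m_1=m$, the bound reads $1+f(m-1)\leq\tfrac{1}{2}m(m+1)$, i.e. $f\leq\tfrac{m}{2}+1$.

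For case (iii) the absolute bound above still holds but is no longer sharp; instead I would use the non-negativity part, Proposition~\ref{prop:krein}(i), of one further Krein parameter $q$ computed in the Appendix, which equals a positive multiple of $m(n-2)-f(2k-mn)\sqrt{\tfrac{m(n-1)}{k(mn-k)}}$. When $mn-2k\geq0$ this is non-negative for every $f$ (and so contributes nothing in case (ii)), whereas when $mn-2k<0$, i.e. $2k-mn>0$, the requirement $q\geq0$ rearranges exactly to the bound in (iii); since $n\geq3$ the numerator $m(n-2)$ is positive, so the bound is meaningful. This is precisely the dichotomy recorded by the sign of $mn-2k$.

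The hard part will be the Appendix-level bookkeeping underpinning all of this: computing the parameters $q_{1,1}^l$ from $Q$ and determining exactly which vanish in each regime---in particular that the support $\{l:q_{1,1}^l>0\}$ is $\{0,1,2,3\}$ when $(m,n)\neq(k,2)$ and $\{0,2\}$ when $(m,n)=(k,2)$---and identifying the one additional Krein parameter whose sign is governed by $mn-2k$. Once these sign determinations are in hand, each of the three bounds is a one-line rearrangement of a single Krein inequality.
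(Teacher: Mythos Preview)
Your plan is exactly the paper's: obtain the $4$-class scheme via Theorem~\ref{thm:aslgdd0}, use the absolute bound of Proposition~\ref{prop:krein}(ii) with $i=j=1$ for (i) and (ii), and use Krein nonnegativity for (iii). One correction: the Krein parameter governing case~(iii) is not ``one further'' parameter but $q_{1,1}^1$ itself, and from the Appendix it equals $\tfrac{1}{f}\bigl(m(n-2)-(f-1)(2k-mn)\sqrt{m(n-1)/k(mn-k)}\bigr)$, so the coefficient of the square-root term is $f-1$, not $f$; the paper simply reads off $q_{1,1}^1\ge 0$ when $mn-2k<0$.
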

\begin{proof}
Since $f\geq3$, the association scheme is obtained by Theorem~\ref{thm:aslgdd0}.   

(i): In the case where $k=m$ and $n=2$, $q_{1,1}^1=q_{1,1}^3=0$ hold.  
Then we apply Proposition~\ref{prop:krein} to our association scheme with $i=j=1$ to obtain 
\begin{align*}
1+f(m-1)=m_0+m_2=\sum_{l:q_{1,1}^l> 0}m_l\leq \frac{m_1(m_1+1)}{2}=\frac{m(m+1)}{2}, 
\end{align*}  
from which we obtain the desired inequality.

(ii): Since $m n-2k\geq0$, $q_{1,1}^1>0$. By Lemma~\ref{lem:kp} (i), $q_{1,1}^3>0$. 
Then we apply Proposition~\ref{prop:krein} to our association scheme with $i=j=1$ to obtain 
\begin{align*}
f(mn-1)+1=\sum_{l=0}^3 m_l=\sum_{l:q_{1,1}^l> 0}m_l\leq \frac{m_1(m_1+1)}{2}=\frac{m(n-1)(m(n-1)+1)}{2}, 
\end{align*}  
from which we obtain the desired inequality. 

(iii): We use $m n-2k<0$ and $q_{1,1}^1\geq0$ (by Lemma~\ref{lem:kp} (i)) in order to obtain the desired inequality on $f$.  
\end{proof}

The case where $n=2,k=m$ will be studied below in Section~\ref{sec:mub}.

\section{Examples of linked systems of symmetric group divisible designs}\label{sec:elgdd} 
In this section we give several examples of linked systems of symmetric group divisible designs.  
First we give a general lemma to generate linked systems of symmetric group divisible designs. 
Second we characterize mutually unbiased Hadamard matrices, which are equivalent to real mutually unbiased bases, in terms of linked systems of symmetric group divisible designs. 
Finally, we give examples of linked systems of symmetric group divisible designs using generalized Hadamard matrices and mutually UFS Latin squares.  

\subsection{A set of incidence matrices of symmetric group divisible designs}
We provide a general construction of linked systems of symmetric group divisible designs from a set of incidence matrices of symmetric group divisible designs having specific properties. 
We need the following proposition first.
\begin{proposition}\label{prop:constlgdd}
If there exist symmetric group divisible designs $A_i$ {\upshape (}$i\in\{1,\ldots,f\}${\upshape)} with parameters $(v,k,m,n,\lambda_1,\lambda_2)$ 
such that for any $i$, $A_i(I_m\otimes J_n)=(I_m\otimes J_n)A_i=\frac{k}{m}J_v$ and  for any distinct $i,j$, $A_i A_j^\top$ has two distinct entries $\sigma,\tau$, then 
there exists a linked system of $f$ symmetric group divisible designs 
 with parameters $(v,k^*,m,n,\lambda_1^*,\lambda_2^*)$ and $\sigma^*,\tau^*$, where 
$x^*=g(x):=\frac{1}{(\sigma-\tau)^2}((k-\lambda_1)x+\frac{(\lambda_1-\lambda_2)k^2}{m}+\lambda_2 k^2-2\tau k^2+\tau^2 v)$ for $x\in\{k,\lambda_1,\lambda_2,\sigma,\tau\}$.  
\end{proposition}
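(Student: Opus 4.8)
The plan is to construct the linked system explicitly from the given designs. For distinct $i,j$ I set
\[
A_{i,j}:=\frac{1}{\sigma-\tau}\left(A_iA_j^\top-\tau J_v\right),
\]
the $(0,1)$-matrix recording the positions where the two-valued matrix $A_iA_j^\top$ equals $\sigma$. Because $(A_iA_j^\top)^\top=A_jA_i^\top$ carries the same two values $\sigma,\tau$, this gives $A_{i,j}^\top=A_{j,i}$ at once, and each $A_{i,j}$ is a genuine $(0,1)$-matrix with $A_iA_j^\top=(\sigma-\tau)A_{i,j}+\tau J_v$. It then remains to establish (L1) and (L2), reading off every resulting parameter through the affine map $g$.

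Before the verifications I would record the identities that make everything collapse. By \eqref{eq:gdd} each design satisfies $A_iA_i^\top=A_i^\top A_i=M$, where $M:=(k-\lambda_1)I_v+(\lambda_1-\lambda_2)(I_m\otimes J_n)+\lambda_2J_v$; moreover $A_iJ_v=J_vA_i=kJ_v$. The hypothesis $A_i(I_m\otimes J_n)=(I_m\otimes J_n)A_i=\frac{k}{m}J_v$ yields the two contractions $A_i(I_m\otimes J_n)A_l^\top=\frac{k^2}{m}J_v$ and $A_iJ_vA_l^\top=k^2J_v$ for all $i,l$. The role of this hypothesis is precisely to turn the middle term of $M$ into a multiple of $J_v$, so that in the products below every cross-term becomes a multiple of the all-ones matrix.

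For (L1) I compute $A_{i,j}A_{i,j}^\top=A_{i,j}A_{j,i}=\frac{1}{(\sigma-\tau)^2}(A_iA_j^\top-\tau J_v)(A_jA_i^\top-\tau J_v)$. Replacing $A_j^\top A_j$ by $M$ turns the leading product into $A_iMA_i^\top$, and the identities above reduce the whole expression to $\frac{1}{(\sigma-\tau)^2}\left((k-\lambda_1)M+cJ_v\right)$ with $c:=\frac{(\lambda_1-\lambda_2)k^2}{m}+\lambda_2k^2-2\tau k^2+\tau^2v$. Expanding $M$ and $J_v$ in the basis $\{I_v,\,I_m\otimes J_n-I_v,\,J_v-I_m\otimes J_n\}$ and collecting coefficients gives exactly $g(k)I_v+g(\lambda_1)(I_m\otimes J_n-I_v)+g(\lambda_2)(J_v-I_m\otimes J_n)$, so by the incidence-matrix characterization following \eqref{eq:gdd} each $A_{i,j}$ is a symmetric group divisible design with parameters $(v,g(k),m,n,g(\lambda_1),g(\lambda_2))$. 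For (L2) the analogous computation on $A_{i,j}A_{j,l}=\frac{1}{(\sigma-\tau)^2}(A_iA_j^\top-\tau J_v)(A_jA_l^\top-\tau J_v)$, using $A_iA_j^\top A_jA_l^\top=A_iMA_l^\top$ and then $A_iA_l^\top=(\sigma-\tau)A_{i,l}+\tau J_v$, leaves $A_{i,j}A_{j,l}=\frac{k-\lambda_1}{\sigma-\tau}A_{i,l}+g(\tau)J_v$. Since $g(\sigma)-g(\tau)=\frac{k-\lambda_1}{\sigma-\tau}$, this is $g(\sigma)A_{i,l}+g(\tau)(J_v-A_{i,l})$, which is (L2) with $\sigma^*=g(\sigma)$ and $\tau^*=g(\tau)$.

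Each individual computation is routine once the three identities are in place; the main hazard is purely bookkeeping, namely keeping track of the several all-ones contributions in the two expansions and noticing that the same constant $c$ governs all of them, which is what forces every parameter to be the common affine image under $g$. I would also remark that $\sigma^*-\tau^*=\frac{k-\lambda_1}{\sigma-\tau}$, so the constructed system is again a proper linked system whenever $k\neq\lambda_1$.
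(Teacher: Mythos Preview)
Your proof is correct and follows essentially the same route as the paper: both define $A_{i,j}=\frac{1}{\sigma-\tau}(A_iA_j^\top-\tau J_v)$ and then verify (L1) and (L2) by expanding with $A_j^\top A_j=M$ and the row/column-sum hypotheses. You have in fact supplied more detail than the paper, which simply says the two displayed identities are ``routinely'' obtained; in particular your explicit identification of the common constant $c$ and the observation $g(\sigma)-g(\tau)=\frac{k-\lambda_1}{\sigma-\tau}$ make transparent why all five parameters are governed by the single affine map $g$.
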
  
\begin{proof}
For any distinct $i,j$, define a $v\times v$ $(0,1)$-matrix $A_{i,j}$ as $A_i A_j^\top=\sigma A_{i,j}+\tau(J_v-A_{i,j})$.   
Since $\sigma$ and $\tau$ are distinct, we have $A_{i,j}=\frac{1}{\sigma-\tau}(A_i A_j^\top-\tau J_v)$. 
Then, by the property of $A_i$ being a symmetric group divisible design, we routinely obtain    
\begin{align*}
A_{i,j} A_{i,j}^\top&=\frac{1}{(\sigma-\tau)^2}( (k-\lambda_1)^2 I_v+(k-\lambda_1)(\lambda_1-\lambda_2)I_m\otimes J_n\\
&\qquad +((k-\lambda_1)\lambda_2+\frac{(\lambda_1-\lambda_2)k^2}{m}+\lambda_2 k^2-2\tau k^2+\tau^2 v)J_v),\\
A_{i,j} A_{j,l}&=\frac{1}{(\sigma-\tau)^2}( (k-\lambda_1)(\sigma-\tau) A_{i,l}\\
&\qquad +((k-\lambda_1)\tau+\frac{(\lambda_1-\lambda_2)k^2}{m}+\lambda_2 k^2-2\tau k^2+\tau^2 v)J_v).
\end{align*}
Thus $A_{i,j}$ ($i,j\in\{1,\ldots,f\}$, $i\neq j$) form a linked system of symmetric group divisible designs with the desired parameters $(v,k^*,m,n,\lambda_1^*,\lambda_2^*)$ and $\sigma^*,\tau^*$. 
\end{proof}

\subsection{Mutually unbiased Hadamard matrices}\label{sec:mub}
We consider the case $\lambda_1=0$. 
As a corollary of Proposition~\ref{prop:p}, this case is expressed
 using only two parameters.
\begin{corollary}
Let $(v,k,m,n,\lambda_1,\lambda_2)$ be the parameters of a linked system of  $f$ symmetric group divisible designs with $f\geq 3$. 
If $\lambda_1=0$, then $(v,k,m,n,\lambda_1,\lambda_2)=(n^3 l^2, n^2 l^2, n^2 l^2,n,0,n l^2)$ for some positive integer $l$.  
\end{corollary}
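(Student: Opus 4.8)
The plan is to feed $\lambda_1=0$ into the parameter formulas of Proposition~\ref{prop:p} and then extract the remaining constraints from the integrality of the counting parameters. First I would observe that $\lambda_1=\frac{k(k-m)}{m(n-1)}=0$ together with $k>0$ forces $k=m$. Substituting $k=m$ into the other formulas immediately gives $v=mn$ and $\lambda_2=\frac{k^2}{mn}=\frac{m}{n}$, so the whole parameter set is already reduced to the two quantities $m$ and $n$; the remaining task is to show that $m$ must have the shape $n^2l^2$.

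The key simplification I would exploit is that $k=m$ collapses the radical appearing in $\sigma,\tau$: indeed $\frac{k(mn-k)}{m(n-1)}=\frac{m\cdot m(n-1)}{m(n-1)}=m$, so $\sqrt{\frac{k(mn-k)}{m(n-1)}}=\sqrt{m}$ and $\sigma-\tau=\sqrt{m}$. Taking the sign convention $\sigma-\tau>0$ fixed after Eq.~\eqref{eq:st}, the formula for $\tau$ in Proposition~\ref{prop:p} becomes $\tau=\frac{k}{mn}(k-\sqrt{m})=\frac{m}{n}-\frac{\sqrt{m}}{n}=\lambda_2-\frac{\sqrt{m}}{n}$.

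The decisive step is then purely arithmetic: since $\tau$ and $\lambda_2$ are both counts, hence non-negative integers, their difference $\frac{\sqrt{m}}{n}=\lambda_2-\tau$ is a non-negative integer, say $l$, and $m\geq 2$ forces $l\geq 1$. This yields $\sqrt{m}=nl$, i.e.\ $m=n^2l^2$. Back-substituting gives $k=m=n^2l^2$, $v=mn=n^3l^2$, and $\lambda_2=\frac{m}{n}=nl^2$, which is exactly the claimed parameter tuple $(n^3l^2,n^2l^2,n^2l^2,n,0,nl^2)$.

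The main obstacle, to my mind, is not the algebra --- which is short --- but recognizing that the condition $m=n^2l^2$ is pinned down by integrality rather than by any identity: the algebraic relations alone give only $k=m$ and the value $\sigma-\tau=\sqrt{m}$, and it is the requirement that the combinatorial parameters $\tau$ and $\lambda_2$ both be integers that simultaneously forces $m$ to be a perfect square and $n\mid\sqrt{m}$. I would also double-check that the lower-sign branch in Proposition~\ref{prop:p} does not yield a competing family; this is handled by the standing convention $\sigma-\tau>0$, which selects the upper signs throughout.
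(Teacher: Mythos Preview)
Your argument is correct and follows essentially the same route as the paper: derive $k=m$ from $\lambda_1=0$ via Proposition~\ref{prop:p}, simplify the radical to $\sqrt{m}$, and then invoke integrality of the counting parameters to force $n\mid\sqrt{m}$. The only cosmetic difference is that the paper reads the divisibility off $\sigma=\frac{g^2}{n}\mp\frac{g}{n}\pm g$ (after first noting $m=g^2$ from $\sigma-\tau\in\mathbb{Z}$), whereas you extract it in one stroke from $\lambda_2-\tau=\frac{\sqrt m}{n}$; both yield the same conclusion, and the paper in fact keeps both signs rather than appealing to the $\sigma>\tau$ convention, but as you note the lower sign gives $\tau-\lambda_2=\frac{\sqrt m}{n}$ and the argument is unchanged.
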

\begin{proof}
Proposition~\ref{prop:p} with $\lambda_1=0$ implies that $k=m$, $\lambda_2=\frac{m}{n}$, and $\sigma-\tau=\pm \sqrt{m}$. 
Thus $m$ is a square number, say $m=g^2$.
Note that since $\lambda_2=\frac{k^2}{mn}$, $\frac{g^2}{n}$ is an integer,  
then it holds that
\begin{align*}
\sigma=\frac{g^2}{n}\mp \frac{g}{n}\pm g. 
\end{align*}   
Thus $\frac{g}{n}$ must be an integer, say $l$. 
Thus we conclude 
\begin{align*}
(v,k,m,n,\lambda_1,\lambda_2)=(n^3 l^2, n^2 l^2, n^2 l^2,n,0,n l^2),
\end{align*}
as desired. 
\end{proof}

Next we consider the case with $\lambda_1=0$ and $n=2$.
\begin{lemma}\label{lem:gddh}
The existence of the following are equivalent.
\begin{enumerate}
\item a symmetric group divisible design $A$ with parameters $(2m,m,m,2,0,\frac{m}{2})$ satisfying $A (I_m\otimes J_2)=(I_m\otimes J_2)A=J_{2m}$. 
\item a Hadamard matrix $H$ of order $m$. 
\end{enumerate}
\end{lemma}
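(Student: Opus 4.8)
The plan is to establish the equivalence by constructing an explicit $(0,1)$-matrix correspondence in each direction, exploiting the block structure forced by the condition $A(I_m\otimes J_2)=(I_m\otimes J_2)A=J_{2m}$. First I would analyze what this row/column-sum condition means for the $2\times 2$ blocks of $A$. Writing $A$ as an $m\times m$ array of $2\times 2$ blocks, the condition $A(I_m\otimes J_2)=J_{2m}$ says each block has all row sums equal; combined with $(I_m\otimes J_2)A=J_{2m}$ it says each block has constant row and column sums. Since the entries are in $\{0,1\}$ and the common sum must be $1$ (the total row sum of $A$ is $k=m$, spread over $m$ blocks), each $2\times 2$ block is a permutation matrix, hence either $I_2$ or the swap $\begin{pmatrix}0&1\\1&0\end{pmatrix}$. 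The key observation is that these two matrices are $\tfrac12(J_2\pm S)$ where $S=\begin{pmatrix}1&-1\\-1&1\end{pmatrix}$, so replacing each block by its ``sign'' $\pm 1$ produces an $m\times m$ matrix $H$ with entries in $\{\pm1\}$.

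Next I would translate the symmetric group divisible design condition \eqref{eq:gdd} with the given parameters into a statement about $H$. Using Lemma~\ref{lem:qutient} or a direct computation, the relation $AA^\top = mI_{2m} + 0\cdot(I_m\otimes J_2 - I_{2m}) + \tfrac{m}{2}(J_{2m}-I_m\otimes J_2)$ should, after the substitution $A = \tfrac12(I_m\otimes J_2) + \tfrac12\widetilde{H}$ where $\widetilde{H}$ encodes the signs via $\widetilde{H}_{\text{block}} = \pm S$, reduce to $HH^\top = mI_m$. That is exactly the defining condition for a Hadamard matrix of order $m$. Conversely, given a Hadamard matrix $H$, I would run the same substitution backward: define $A$ blockwise by putting $I_2$ where $H_{ij}=1$ and the swap where $H_{ij}=-1$, and verify directly that the resulting $A$ satisfies \eqref{eq:gdd} with parameters $(2m,m,m,2,0,\tfrac m2)$ and the row/column-sum condition. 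Both verifications are routine block-matrix computations once the dictionary between $\pm1$ entries of $H$ and the two permutation blocks is fixed.

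The main obstacle, and the only step requiring genuine care, is pinning down exactly which algebraic identity on $H$ corresponds to the off-diagonal parameter $\lambda_2=\tfrac m2$ and the partition structure (the $I_m\otimes J_2$ term). Concretely, I must check that the $\lambda_1=0$ condition forces the \emph{diagonal} blocks of $AA^\top$ to equal $mI_2$ exactly (no $J_2$ contribution), which is what selects $HH^\top = mI_m$ rather than some weaker relation; and I must confirm that the substitution is a genuine bijection, i.e.\ that every sign pattern $H$ arises from a unique valid $A$ and vice versa, with no parity or consistency constraint lost. Once the block-to-sign dictionary is verified to intertwine the two matrix equations, the equivalence follows in both directions.
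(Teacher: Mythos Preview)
Your plan is correct and matches the paper's proof: both recognize that the row/column-sum condition forces each $2\times 2$ block of $A$ to be $I_2$ or $J_2-I_2$, encode this choice as a $\pm1$ matrix $H$ (the paper writes $A=A_1\otimes I_2+A_2\otimes(J_2-I_2)$ with $H=A_1-A_2$), and verify that the GDD equation \eqref{eq:gdd} is equivalent to $HH^\top=mI_m$. One small slip to fix when you write it up: your substitution should read $A=\tfrac12 J_{2m}+\tfrac12\widetilde{H}$ (i.e.\ $J_m\otimes J_2$, not $I_m\otimes J_2$), since the $\tfrac12 J_2$ part appears in \emph{every} block, not just the diagonal ones.
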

\begin{proof}
(i)$\Rightarrow$(ii): Since $A(I_m\otimes J_2)=(I_m\otimes J_2)A=J_{2m}$, the exist $m\times m$ $(0,1)$-matrices $A_1,A_2$ such that $A_1+A_2=J_m$ and $A=A_1\otimes I_2+A_2\otimes (J_2-I_2)$. 
Substituting this into Eq.\eqref{eq:gdd}, we have
\begin{align}
A_1A_1^\top+A_2A_2^\top&=\frac{m}{2}(J_m+I_m),\label{eq:h1}\\
A_1A_2^\top+A_2A_1^\top&=\frac{m}{2}(J_m-I_m).\label{eq:h2}
\end{align}
Then $H:=A_1-A_2$ is a Hadamard matrix of order $m$. 

(ii)$\Rightarrow$(i): Decompose $H$ into $A_1-A_2$ for some $m\times m$ $(0,1)$-matrices $A_1,A_2$ such that $A_1+A_2=J_m$. By $HH^\top=mI_m$, $A_1$ and $A_2$ satisfy Eq. \eqref{eq:h1}, \eqref{eq:h2}, and thus $A:=A_1\otimes I_2+A_2\otimes (J_2-I_2)$ is the desired symmetric group divisible design. 
\end{proof}
We now characterize mutually unbiased Hadamard matrices in terms of linked systems of symmetric group divisible designs. 
Mutually unbiased Hadamard matrices are a collection of Hadamard matrices $H_1,\ldots,H_f$ of order $n$ such that $\frac{1}{\sqrt{n}}H_i H_j^\top$ is a Hadamard matrix of order $n$ for any distinct $i,j\in\{1,\ldots,f\}$.  
\begin{proposition}
Let $f\geq 3$. 
The existence of the following are equivalent.
\begin{enumerate}
\item a linked system of symmetric group divisible designs $A_{i,j}$ {\upshape (}$i,j\in\{1,\ldots,f\},i\neq j${\upshape )} with parameters $(2m,m,m,2,0,\frac{m}{2})$. 
\item mutually unbiased Hadamard matrices $H_i$ {\upshape(}$i\in\{1,\ldots,f-1\}${\upshape)} of order $m$. 
\end{enumerate}
 \end{proposition}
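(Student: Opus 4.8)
The plan is to exploit the rigid block structure forced by the parameters $(2m,m,m,2,0,\tfrac m2)$ together with the usual normalization that, in a linked system, one may single out a base fiber. Throughout I write $K:=J_2-I_2$, so that $K^2=I_2$ and $\{I_2,K\}$ is closed under multiplication.

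First I would record the block structure. Since $f\geq3$ and the designs are proper ($\lambda_1=0\neq\lambda_2=\tfrac m2$), Lemma~\ref{lem:lsgdd}(ii)(b) gives $A_{i,j}(I_m\otimes J_2)=(I_m\otimes J_2)A_{i,j}=\tfrac km J_{2m}=J_{2m}$ for all distinct $i,j$, because $\alpha=k/m=1$. As in the proof of Lemma~\ref{lem:gddh}, this forces every $2\times2$ block of the $(0,1)$-matrix $A_{i,j}$ to equal $I_2$ or $K$, so $A_{i,j}=P_{i,j}\otimes I_2+Q_{i,j}\otimes K$ with $(0,1)$-matrices satisfying $P_{i,j}+Q_{i,j}=J_m$, and $H_{i,j}:=P_{i,j}-Q_{i,j}$ is a Hadamard matrix of order $m$; moreover $A_{i,j}^\top=A_{j,i}$ yields $H_{j,i}=H_{i,j}^\top$.

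Next I translate the linking condition (L2) into a composition rule on the $H_{i,j}$. Expanding $A_{i,j}A_{j,l}=(\sigma-\tau)A_{i,l}+\tau J_{2m}$ in the basis $\{I_2,K\}$ (the term $J_{2m}=J_m\otimes I_2+J_m\otimes K$ splits evenly) and subtracting the $I_2$- and $K$-components gives $(P_{i,j}-Q_{i,j})(P_{j,l}-Q_{j,l})=(\sigma-\tau)(P_{i,l}-Q_{i,l})$, i.e.
\[
H_{i,j}H_{j,l}=\sqrt m\,H_{i,l}\qquad(i,j,l\text{ distinct}),
\]
since $\sigma-\tau=\sqrt{k-\lambda_1}=\sqrt m$ by \eqref{eq:st}. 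For (i)$\Rightarrow$(ii) I set $G_i:=H_{i,f}$ for $i\in\{1,\dots,f-1\}$; each $G_i$ is Hadamard, and $\tfrac1{\sqrt m}G_iG_j^\top=\tfrac1{\sqrt m}H_{i,f}H_{f,j}=H_{i,j}$ is Hadamard, so the $G_i$ are mutually unbiased. For the converse (ii)$\Rightarrow$(i) I reverse the normalization: given mutually unbiased Hadamard matrices $H_1,\dots,H_{f-1}$ of order $m$, define $H_{i,f}:=H_i$, $H_{f,i}:=H_i^\top$, and $H_{i,j}:=\tfrac1{\sqrt m}H_iH_j^\top$ for distinct $i,j\leq f-1$; a short case analysis using $H_a^\top H_a=mI_m$ verifies $H_{j,i}=H_{i,j}^\top$ and the composition rule $H_{i,j}H_{j,l}=\sqrt m\,H_{i,l}$ for every distinct triple. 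Decomposing $H_{i,j}=P_{i,j}-Q_{i,j}$ into $(0,1)$-matrices and setting $A_{i,j}=P_{i,j}\otimes I_2+Q_{i,j}\otimes K$, Lemma~\ref{lem:gddh} makes each $A_{i,j}$ a symmetric group divisible design with the required parameters and block property, so (L1) holds; feeding the composition rule back through the $\{I_2,K\}$-expansion (together with $(P_{i,j}+Q_{i,j})(P_{j,l}+Q_{j,l})=mJ_m$) recovers (L2) with $\sigma=\tfrac{m+\sqrt m}2$, $\tau=\tfrac{m-\sqrt m}2$.

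The matrix algebra (the two component equations and $P+Q=J_m$) is immediate once the tensor decomposition is in place; the step that needs care is the normalization itself, namely checking that fixing the base fiber $f$ matches the $\binom f2$ matrices $H_{i,j}$ with exactly $f-1$ mutually unbiased Hadamard matrices, and that the composition rule $H_{i,j}H_{j,l}=\sqrt m\,H_{i,l}$ persists across \emph{all} index configurations (in particular those involving $f$), since this is precisely what guarantees (L2) for every triple rather than only for those avoiding the base.
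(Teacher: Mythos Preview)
Your proof is correct and follows essentially the same route as the paper: use Lemma~\ref{lem:lsgdd}(ii)(b) to force the $2\times2$ block decomposition, pass to $H_{i,j}=P_{i,j}-Q_{i,j}$ via Lemma~\ref{lem:gddh}, and read off the composition rule $H_{i,j}H_{j,l}=\sqrt m\,H_{i,l}$ from (L2). The paper picks base fiber $1$ (outputting $H_{1,2},\dots,H_{1,f}$) while you pick $f$, and you spell out the converse direction more explicitly than the paper's one-line ``noting the validity of the converse arguments above''; otherwise the arguments coincide.
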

\begin{proof} 
(i)$\Rightarrow$(ii): 
Note that the linked system of symmetric group divisible designs in (i) satisfy $A_{i,j} (I_m\otimes J_2)=(I_m\otimes J_2)A_{i,j}=J_{2m}$ by $f\geq 3$ and Lemma~\ref{lem:lsgdd} (ii). 

Let $A_{i,j}$ ($i,j\in\{1,\ldots,f\},i\neq j$) be the linked system of symmetric group divisible designs. 
Let $A_{i,j,1},A_{i,j,2}$ be $(0,1)$-matrices such that $A_{i,j,1}+A_{i,j,2}=J_m$ and $A_{i,j}=A_{i,j,1}\otimes I_2+A_{i,j,2}\otimes (J_2-I_2)$, and set $H_{i,j}=A_{i,j,1}-A_{i,j,2}$. 
Then $H_{i,j}$ is a Hadamard matrix by Lemma~\ref{lem:gddh}. 

For any distinct $i,j,l\in\{1,\ldots,f\}$, the equation in (L2) shows that 
\begin{align}
A_{i,j,1}A_{j,l,1}^\top+A_{i,j,2}A_{j,l,2}^\top&=\sqrt{m}A_{i,l,1}+\frac{m-\sqrt{m}}{2} J_m,\label{eq:h3}\\
A_{i,j,1}A_{j,l,2}^\top+A_{i,j,2}A_{j,l,1}^\top&=\sqrt{m}A_{i,l,2}+\frac{m-\sqrt{m}}{2} J_m.\label{eq:h4}
\end{align}
Then $H_{i,j}H_{j,l}=\sqrt{m}H_{i,l}$ holds, and thus $H_{1,2},\ldots,H_{1,f}$ are $f-1$ mutually unbiased Hadamard matrices. 

(ii)$\Rightarrow$(i): This implication follows from noting the validity of the converse arguments above.  
\end{proof}
 
\begin{remark}
In \cite{D}, \cite{M}, the existence of linked systems of symmetric designs was shown to be equivalent to the existence of $3$-class $Q$-polynomial association schemes which is $Q$-antipodal.  
In \cite{KSS}, it was shown that linked systems of symmetric designs with certain parameters that were obtained from mutually unbiased Bush-type Hadamard matrices has a $5$-class fission association scheme, see  \cite{KSS} for undefined terms. 
\end{remark}

\subsection{Mutually unbiased biangular vectors}\label{sec:biangular}
Let $H$ be a normalized Hadamard matrix of order $n$ with rows $r_1,\ldots,r_n$. 
Set $C_i=r_i^\top r_i$ for any $i\in\{1,\ldots,n\}$. 
Let $L_1,\ldots,L_f$ be mutually UFS Latin squares on the symbol set $\{2,\ldots,n\}$.  
Define $M_i$ to be an $n(n-1)\times n(n-1)$ matrix obtained by replacing the entries $l$ in $L_i$ by $C_l$ for each $l$.

Then a uniform association scheme is obtained by the Gram matrix of row vectors of the matrices $M_1,\ldots,M_f$ \cite[Theorem~10]{HKS}. 
Theorem~\ref{thm:aslgdd} applies to this type association scheme, 
and thus we obtain a linked system of symmetric group divisible designs with parameters 
$(n(n-1),\frac{n(n-1)}{2},n-1,n,\frac{n(n-2)}{4},\frac{n(n-1)}{4})$ with $(\sigma,\tau)=(\frac{n^2}{4},\frac{n(n-2)}{4})$. 

\subsection{Mutually UFS Latin squares and generalized Hadamard matrices}\label{sec:GH}
In this subsection we give a construction based on generalized Hadamard matrices and mutually UFS Latin squares. 
First we provide a unifying method to construct linked systems of symmetric group divisible  designs.  
\begin{theorem}\label{thm:lgddconst}
Let $C_1,\ldots,C_l$ be $m n\times m n$ $(0,1)$-matrices such that 
\begin{enumerate}
\item $\sum_{a=1}^l C_a=k I_{m n}+\lambda_1(I_m\otimes J_n-I_{m n})+\lambda_2(J_{m n}-I_m\otimes J_n)$,
\item for any $a\in\{1,\ldots,l\}$, $C_a C_a^\top=\alpha C_a$ for some real $\alpha$, 
\item for any distinct $a,b\in \{1,\ldots,l\}$, $C_a C_b^\top=\beta J_{m n}$  for some real $\beta$.
\end{enumerate}
Assume that either $\alpha\lambda_1\neq \alpha\lambda_2=\beta l$ or $\alpha\lambda_1= \alpha\lambda_2\neq \beta l$ holds. 
Then the following hold. 
\begin{enumerate}
\item If there exists a Latin square on $\{1,\ldots,l\}$, then there exists a symmetric group divisible design with parameters $(l m n, \alpha k,m^*,n^*,\alpha \lambda_1,l\beta)$ where $(m^*,n^*)$ is $(l m,n)$ if $\alpha\lambda_1\neq \alpha\lambda_2=\beta l$ and $(l, m n)$ if $\alpha\lambda_1= \alpha\lambda_2\neq \beta l$. 
\item If there exist linked UFS Latin squares $L_{i,j}$ {\upshape(}$i,j\in\{1,\ldots,f\},i\neq j${\upshape)} on $\{1,\ldots,l\}$, then there exists a linked system of symmetric group divisible designs with the same parameters in (i) and $(\sigma,\tau)=(\alpha+(l-1)\beta,(l-1)\beta)$.  


\end{enumerate}
\end{theorem}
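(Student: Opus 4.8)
The plan is to build every design as a block matrix of order $lmn$ whose $(s,t)$-block has order $mn$ and is one of $C_1,\dots,C_l$, the choice being read off from a Latin square (with positions $s,t$ ranging over $\{1,\dots,l\}$). I first record two facts about a Latin square of order $l$: each row and each column is a permutation of $\{1,\dots,l\}$, and two distinct rows, or two distinct columns, agree in no position. I also note that, since $C_aC_a^\top=\alpha C_a$ is symmetric, each $C_a$ is symmetric whenever $\alpha\neq0$ (and $\alpha=0$ forces every $C_a=0$, a trivial case), so hypotheses (ii) and (iii) may be rewritten as $C_aC_b=\alpha C_a$ when $a=b$ and $C_aC_b=\beta J_{mn}$ when $a\neq b$.

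For part (i), let $L$ be a Latin square of order $l$ on $\{1,\dots,l\}$ and let $A$ be the block matrix with $(s,t)$-block $C_{L_{st}}$. Computing $AA^\top$ blockwise, each diagonal block equals $\sum_{a=1}^l C_aC_a^\top=\alpha\sum_{a=1}^lC_a$, which by (i) is $\alpha$ times the group-divisible form, while each off-diagonal block equals $l\beta J_{mn}$ because distinct rows of $L$ coincide nowhere. Assembling the Kronecker pieces gives
\begin{align*}
AA^\top=\alpha(k-\lambda_1)I_{lmn}+\alpha(\lambda_1-\lambda_2)(I_{lm}\otimes J_n)+(\alpha\lambda_2-l\beta)(I_l\otimes J_{mn})+l\beta J_{lmn}.
\end{align*}
The dichotomy in the hypothesis is exactly what makes one of the two middle terms vanish: if $\alpha\lambda_1\neq\alpha\lambda_2=\beta l$ the surviving group structure is $I_{lm}\otimes J_n$, giving $(m^*,n^*)=(lm,n)$; if $\alpha\lambda_1=\alpha\lambda_2\neq\beta l$ it is $I_l\otimes J_{mn}$, giving $(m^*,n^*)=(l,mn)$. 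In both cases matching coefficients against the group-divisible form identifies the parameters as $(lmn,\alpha k,m^*,n^*,\alpha\lambda_1,l\beta)$. Using symmetry of the $C_a$ together with the analogous column facts, the computation of $A^\top A$ is identical, so $A$ is a symmetric group divisible design by the characterization via \eqref{eq:gdd}.

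For part (ii), given the linked UFS Latin squares $L_{i,j}$, I form $A_{i,j}$ with $(s,t)$-block $C_{(L_{i,j})_{st}}$. By part (i) each $A_{i,j}$ is a symmetric group divisible design with the stated parameters, so (L1) holds. The relation $A_{i,j}^\top=A_{j,i}$ reduces to $L_{j,i}=L_{i,j}^\top$, which follows from the definition of linked UFS Latin squares: interchanging the two squares' ordering in Lemma~\ref{lem:sl} transposes the resulting square, and the definition prescribes both orderings. It remains to verify (L2) for distinct $i,j,h$, i.e. that the $(s,t)$-block $\sum_{p=1}^l C_{(L_{i,j})_{sp}}C_{(L_{j,h})_{pt}}$ of $A_{i,j}A_{j,h}$ equals $\alpha C_{(L_{i,h})_{st}}+(l-1)\beta J_{mn}$, which is precisely the $(s,t)$-block of $\sigma A_{i,h}+\tau(J_{lmn}-A_{i,h})$ with $(\sigma,\tau)=(\alpha+(l-1)\beta,(l-1)\beta)$.

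The heart of the matter, and the step I expect to be the main obstacle, is a combinatorial claim controlling this block sum: for fixed $s,t$ there is exactly one index $p$ with $(L_{i,j})_{sp}=(L_{j,h})_{pt}$, and for it the common value is $(L_{i,h})_{st}$. To prove it I would use that (taking the third index to be $h$ in the definition) $L_{i,j}$ is the square obtained from $L_{i,h}$ and $L_{j,h}$ via Lemma~\ref{lem:sl}, so $(L_{i,j})_{sp}$ is the value at which row $s$ of $L_{i,h}$ and row $p$ of $L_{j,h}$ agree, occurring in a unique column $a$. Imposing $(L_{i,j})_{sp}=(L_{j,h})_{pt}$ places this value in column $t$ of row $p$ of $L_{j,h}$, and since that row is a permutation it forces $a=t$, i.e. $(L_{i,h})_{st}=(L_{j,h})_{pt}$. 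As column $t$ of $L_{j,h}$ is a permutation, exactly one $p$ realizes this, with shared value $(L_{i,h})_{st}$, proving the claim. Substituting the resulting counts (one term $\alpha C_{(L_{i,h})_{st}}$ and $l-1$ terms $\beta J_{mn}$) yields (L2) with the stated $\sigma,\tau$, completing the proof.
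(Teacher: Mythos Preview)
Your argument is correct and follows essentially the same route as the paper: build the block matrix $\tilde L$ from the Latin square and compute $\tilde L\tilde L^\top$ (and $\tilde L^\top\tilde L$) blockwise using properties (i)--(iii), then for the linked system verify (L2) block by block. Your treatment is in fact more careful on two points the paper leaves implicit: you justify the symmetry of each $C_a$ from $C_aC_a^\top=\alpha C_a$, and you verify $A_{i,j}^\top=A_{j,i}$ by observing that swapping the order in Lemma~\ref{lem:sl} transposes the resulting square.

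The only genuine difference is in part (ii): the paper phrases the key step as the claim that for any pair of UFS Latin squares $M_1,M_2$ one has $\tilde M_1\tilde M_2^\top=(\alpha+(l-1)\beta)\tilde M_{1,2}+(l-1)\beta(J-\tilde M_{1,2})$, which follows immediately from the defining one-fixed-symbol property of UFS rows; it then applies this with $(M_1,M_2)=(L_{i,h},L_{j,h})$ to get (L2). You instead compute $A_{i,j}A_{j,h}$ directly and prove the combinatorial claim that exactly one column $p$ satisfies $(L_{i,j})_{sp}=(L_{j,h})_{pt}$ with common value $(L_{i,h})_{st}$, using the linked relation $L_{i,j}=(L_{i,h},L_{j,h})$. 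Both arguments are correct and of comparable length; the paper's is marginally slicker since it reads the UFS definition verbatim, while yours has the advantage of not needing to invoke $A_{j,h}=A_{h,j}^\top$ midway.
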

\begin{proof}

(i):  Let $L$ be a Latin square on $\{1,\ldots,l\}$ 
with $(i,j)$-entry denoted by $l'(i,j)$. 
Define $\tilde{L}$ to be a matrix obtained by replacing $k$ in $L$ with $C_k$ for $k\in\{1,\ldots,l\}$. 
By the assumption on $C_i$, 
\begin{align*}
\text{the $(i,j)$-block of }\tilde{L}\tilde{L}^\top&=\sum_{a=1}^{l}C_{l'(i,a)}C_{l'(j,a)}^\top\\
&=\begin{cases}
\alpha \sum_{a=1}^{l}C_{l'(i,a)} & \text{ if }i=j\\
\beta \sum_{a=1}^l J_{m n} & \text{ if }i\neq j
\end{cases}\\
&=\begin{cases}
\alpha(k I_{m n}+\lambda_1(I_m\otimes J_n-I_{m n})+\lambda_2(J_{m n}-I_m\otimes J_n)) & \text{ if }i=j,\\
\beta l J_{m n} & \text{ if }i\neq j.
\end{cases}
\end{align*}
Thus we obtain 
\begin{align*}
\tilde{L}\tilde{L}^\top&=\alpha k I_{l m n}+ \alpha \lambda_1 (I_{l m}\otimes J_n-I_{l m n})+\alpha \lambda_2 (I_l\otimes J_{m n}-I_{l m}\otimes J_{n})+\beta l(J_l-I_l)\otimes J_{m n}\\
&=\begin{cases}
\alpha k I_{l m n}+ \alpha \lambda_1 (I_{l m}\otimes J_n-I_{l m n})+\beta l (J_{l m n}-I_{l m}\otimes J_{n}) & \text{ if } \alpha\lambda_1\neq \alpha\lambda_2=\beta l,\\
\alpha k I_{l m n}+ \alpha \lambda_1 (I_l\otimes J_{m n}-I_{l m n})+\beta l(J_{l m n}-I_l\otimes J_{m n}) & \text{ if } \alpha\lambda_1=\alpha\lambda_2\neq \beta l.  
\end{cases}
\end{align*}
We also have the same formula for $\tilde{L}^\top \tilde{L}$.  
Thus $\tilde{L}$ is a symmetric group divisible design with the desired parameters. 

(ii): 
The result follows from the following claim.

Claim: For UFS Latin squares $M_1,M_2$ on $\{1,\ldots,l\}$, $\tilde{M}_1\tilde{M}_2^\top=(\alpha+(l-1)\beta)\tilde{M}_{1,2}+(l-1)\beta(J_{l m n}-\tilde{M}_{1,2})$. 

Proof of claim: 
Let $l(i,j),l'(i,j)$ be the $(i,j)$-entry of $M_1,M_2$, respectively. 
Since $M_1,M_2$ are UFS, there exists $p\in\{1,\ldots,l\}$ such that $l(i,p)=l'(j,p)$, say $q$, and $l(i,r)\neq l'(j,r)$ for any $r$ distinct from $p$. 
Then 
\begin{align*}
\text{the $(i,j)$-block of }\tilde{M}_1\tilde{M}_2^\top&=\sum_{a=1}^{l}C_{l(i,k)}C_{l'(j,k)}^\top\\
&=C_{l(i,p)}C_{l'(j,p)}^\top+\sum_{r\neq a}C_{l(i,r)}C_{l'(j,r)}^\top\\
&=C_{q}C_{q}^\top+(l-1)\beta J_{m n}\\
&=\alpha C_{q}+(l-1)\beta J_{m n}.
\end{align*}
Thus $\tilde{M}_1\tilde{M}_2^\top=(\alpha+(l-1)\beta)\tilde{M}_{1,2}+(l-1)\beta(J_{l m n}-\tilde{M}_{1,2})$ holds. 
\end{proof}


Next we apply Theorem~\ref{thm:lgddconst} to $(0,1)$-matrices from constructions based on generalized Hadamard matrices.

Recall that for a generalized Hadamard matrix $GH(g,\lambda)$ $H=(h_{ij})_{i,j=1}^{g\lambda}$ over an abelian group,  
a matrix $C_{H,k}=C_k$ of order $g^2\lambda$ ($k=1,\ldots,g\lambda$) is  
\begin{align*}
C_k=(\phi(-h_{ki}+h_{kj}))_{i,j=1}^{g\lambda}.
\end{align*}

By Lemma~\ref{lem:gh1} and Theorem~\ref{thm:lgddconst}, we obtain the following construction of linked systems of symmetric group divisible designs. 
\begin{theorem}\label{thm:gh1}
If there exist a generalized Hadamard matrix $GH(g,\lambda)$ over an abelian group and 
$f$ mutually UFS Latin squares $L_{i}$ {\upshape(}$i\in\{1,\ldots,f\}${\upshape)} of order $g\lambda$,  
then there exists a linked system of $f+1$ symmetric group divisible designs with parameters $(g^3\lambda^2,g^2\lambda^2,g^2\lambda^2,g,0,g\lambda^2)$ with $(\sigma,\tau)=((g+g\lambda-1)\lambda,(g\lambda-1)\lambda)$. 
\end{theorem}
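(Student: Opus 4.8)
The plan is to verify that the matrices $C_{H,1},\ldots,C_{H,g\lambda}$ built from the generalized Hadamard matrix $H$ satisfy the three hypotheses of Theorem~\ref{thm:lgddconst}, and then invoke part~(ii) of that theorem. First I would observe that Lemma~\ref{lem:gh1} gives us exactly the structural identities we need: part~(i) of that lemma says $\sum_{k=1}^{g\lambda} C_k = g\lambda\, I_{g\lambda}\otimes I_g + \lambda(J_{g\lambda}-I_{g\lambda})\otimes J_g$, which I would rewrite in the form of hypothesis~(i) of Theorem~\ref{thm:lgddconst} with $l=g\lambda$, the underlying order being $mn=g^2\lambda$ (so $m=g\lambda$, $n=g$), the diagonal coefficient $k=g\lambda$, and $\lambda_1=0$, $\lambda_2=\lambda$. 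Concretely $I_{g\lambda}\otimes I_g = I_{m n}$ and $(J_{g\lambda}-I_{g\lambda})\otimes J_g = J_{mn}-I_m\otimes J_n$, so the sum matches $k\,I_{mn}+\lambda_1(I_m\otimes J_n-I_{mn})+\lambda_2(J_{mn}-I_m\otimes J_n)$ with these values. Lemma~\ref{lem:gh1}(ii) gives $C_kC_k^\top = g\lambda\,C_k$, i.e.\ hypothesis~(ii) with $\alpha=g\lambda$, and Lemma~\ref{lem:gh1}(iii) gives $C_kC_{k'}^\top = \lambda J_{g^2\lambda}$, i.e.\ hypothesis~(iii) with $\beta=\lambda$.

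Next I would check the dichotomy condition of Theorem~\ref{thm:lgddconst}, namely that either $\alpha\lambda_1\neq\alpha\lambda_2=\beta l$ or $\alpha\lambda_1=\alpha\lambda_2\neq\beta l$. Here $\alpha\lambda_1 = g\lambda\cdot 0 = 0$, while $\alpha\lambda_2 = g\lambda\cdot\lambda = g\lambda^2$ and $\beta l = \lambda\cdot g\lambda = g\lambda^2$; so $\alpha\lambda_1=0\neq g\lambda^2 = \alpha\lambda_2=\beta l$, placing us in the first branch. This is the branch giving $(m^*,n^*)=(lm,n)=(g\lambda\cdot g\lambda, g)=(g^2\lambda^2, g)$, so the fiber size stays $n^*=g=n$ and the number of groups becomes $m^*=g^2\lambda^2$.

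With the hypotheses confirmed, the $f$ mutually UFS Latin squares of order $g\lambda$ give, by Lemma~\ref{lem:sl1}, a system of linked UFS Latin squares indexed appropriately: the derived squares $L_{i,j}$ form linked UFS Latin squares on $\{1,\ldots,g\lambda\}$, producing $f+1$ indices in total because $f$ mutually UFS squares yield a linked family on $f+1$ fibers (one fiber for the ``base'' and one per square, with the derived squares filling in the off-diagonal pairs). Then Theorem~\ref{thm:lgddconst}(ii) produces a linked system of symmetric group divisible designs with parameters $(lmn,\alpha k, m^*,n^*,\alpha\lambda_1, l\beta)$ and $(\sigma,\tau)=(\alpha+(l-1)\beta,(l-1)\beta)$. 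Substituting $l=g\lambda$, $m=g\lambda$, $n=g$, $\alpha=g\lambda$, $\beta=\lambda$, $k=g\lambda$, $\lambda_1=0$, $\lambda_2=\lambda$ gives $v=lmn=g^3\lambda^2$, $\alpha k=g^2\lambda^2$, $m^*=g^2\lambda^2$, $n^*=g$, $\alpha\lambda_1=0$, $l\beta=g\lambda^2$, and $(\sigma,\tau)=(g\lambda+(g\lambda-1)\lambda,(g\lambda-1)\lambda)=((g+g\lambda-1)\lambda,(g\lambda-1)\lambda)$, exactly the claimed parameters.

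The main obstacle I anticipate is the bookkeeping of the index count: translating ``$f$ mutually UFS Latin squares'' into a linked UFS family on $f+1$ fibers. Lemma~\ref{lem:sl1} only handles triples, so I would need to argue that the family of derived squares $\{L_{i,j}\}$ coming from the $f$ mutually UFS squares (together with the $f$ original squares, suitably indexed over $f+1$ symbols) genuinely satisfies the \emph{linked} condition for \emph{all} distinct triples $i,j,k$, not merely one chosen triple; this is where the careful pairwise-to-global compatibility encoded in Lemma~\ref{lem:sl1} must be applied systematically, and it is the only nonroutine part, since the matrix identities themselves follow immediately from Lemma~\ref{lem:gh1}. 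Everything else is direct substitution.
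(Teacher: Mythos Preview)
Your verification that the matrices $C_1,\ldots,C_{g\lambda}$ satisfy the hypotheses of Theorem~\ref{thm:lgddconst} via Lemma~\ref{lem:gh1} is correct, and the parameter bookkeeping is fine. The gap is exactly the one you flag yourself: getting $f+1$ linked designs out of $f$ mutually UFS Latin squares by invoking Theorem~\ref{thm:lgddconst}(ii). That part of the theorem requires \emph{linked} UFS Latin squares indexed by a set of size $f+1$. Lemma~\ref{lem:sl1} does show that the derived squares $L_{i,j}$ (for $i,j\in\{1,\ldots,f\}$) form a linked UFS family, but only on the $f$ indices $\{1,\ldots,f\}$; there is no canonical ``base'' square $L_{0,j}$ supplied by the data, and one cannot in general manufacture an additional index without an extra UFS square (which may not exist if $f$ is already maximal). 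So Theorem~\ref{thm:lgddconst}(ii) on its own yields only $f$ linked designs, not $f+1$.

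The paper does not go through a linked UFS family on $f+1$ indices. Instead it obtains the $(f{+}1)$-st design by a different mechanism: it takes the $f$ matrices $\tilde{L}_1,\ldots,\tilde{L}_f$ together with the matrix $M$ of Proposition~\ref{prop:gddgh} (which is a symmetric GDD with the same parameters), checks via Lemma~\ref{lem:cd} that $M\tilde{L}_i^\top$ and $\tilde{L}_iM^\top$ also have exactly the two entries $\sigma,\tau$, and then applies Proposition~\ref{prop:constlgdd} to the collection $\{M,\tilde{L}_1,\ldots,\tilde{L}_f\}$ to produce the linked system of $f+1$ designs (noting that the transformation $g(x)$ there is the identity on these parameters). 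So the extra design comes from the auxiliary matrix $M$ and Lemma~\ref{lem:cd}, not from an enlarged Latin-square family; this is the missing idea in your plan.
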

\begin{proof}
For any distinct $i,j\in\{1,\ldots,f\}$, $\tilde{L}_i \tilde{L}_j^\top$ has exactly two entries $(\sigma,\tau)=((g+g\lambda-1)\lambda,(g\lambda-1)\lambda)$ by Theorem~\ref{thm:lgddconst}. 
Furthermore,  by Lemma~\ref{lem:cd}, it follows that $M \tilde{L}_i$ and $\tilde{L}_i M$ have exactly two entries $\sigma,\tau$. 
Thus Proposition~\ref{prop:constlgdd} yields a linked system of $f+1$ symmetric group divisible designs with the desired parameters. 
Here we can easily check that the polynomial $g(x)$ in Proposition~\ref{prop:constlgdd} is equal to $x$. 
\end{proof}

For the case $\lambda=1$, we can add one more matrix.  
The part (i) of the next lemma follows from Lemma~\ref{lem:gh1} with $\lambda=1$, and the part (ii), (iii) are trivial. 
\begin{lemma}\label{lem:gh11}
Let $H$ be a $GH(g,1)$, and $C_k=C_{H,k}$ be as in Lemma~\ref{lem:gh1} for $k\in\{1,\ldots,g\}$, and $C_0=I_g\otimes J_g$. 
Then the following hold.   
\begin{enumerate}
\item $\sum_{k=0}^{g}C_k=g I_{g^2}+J_{g^2}$.
\item For any $k\in\{0,1,\ldots,g\}$, $C_kC_k^\top=g C_k$.
\item For any distinct $k,k'\in\{0,1,\ldots,g\}$, $C_kC_{k'}^\top=J_{g^2}$.
\end{enumerate}
\end{lemma}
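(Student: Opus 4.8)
The plan is to reduce the entire lemma to Lemma~\ref{lem:gh1} specialized to $\lambda=1$ — so that $g\lambda=g$ and each $C_k$ ($1\le k\le g$) has order $g^2$ — and then to absorb the single extra index $k=0$, with $C_0=I_g\otimes J_g$, by a short direct block computation. The only genuinely new content beyond Lemma~\ref{lem:gh1} lies in the cases involving $C_0$, and these hinge on one observation: each block $\phi(-h_{ki}+h_{kj})$ of a $C_k$ is a permutation matrix, so that $J_g\phi(x)=\phi(x)J_g=J_g$ for every $x\in G$, and $J_g^2=gJ_g$.

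For part (i) I would start from Lemma~\ref{lem:gh1}(i) with $\lambda=1$, which reads $\sum_{k=1}^{g}C_k=g\,I_g\otimes I_g+(J_g-I_g)\otimes J_g$. Adding $C_0=I_g\otimes J_g$ and using $I_g\otimes I_g=I_{g^2}$, the two Kronecker terms with right factor $J_g$ combine as $I_g\otimes J_g+(J_g-I_g)\otimes J_g=J_g\otimes J_g=J_{g^2}$, giving $\sum_{k=0}^{g}C_k=gI_{g^2}+J_{g^2}$ as claimed. For part (ii), the indices $k\in\{1,\dots,g\}$ are exactly Lemma~\ref{lem:gh1}(ii) with $\lambda=1$ (where $g\lambda=g$), so only $k=0$ is new; there I would compute $C_0C_0^\top=(I_g\otimes J_g)^2=I_g\otimes J_g^2=g(I_g\otimes J_g)=gC_0$ using $J_g^2=gJ_g$.

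For part (iii), distinct $k,k'\in\{1,\dots,g\}$ are covered by Lemma~\ref{lem:gh1}(iii) with $\lambda=1$ (where $\lambda J_{g^2\lambda}=J_{g^2}$), so the only remaining cases pair $C_0$ with some $C_{k'}$. Here I would argue block-by-block: since the $(i,l)$-block of $C_0$ equals $J_g$ when $l=i$ and is zero otherwise, the $(i,j)$-block of $C_0C_{k'}^\top$ collapses to the single term $J_g\cdot\phi(h_{k'j}-h_{k'i})=J_g$, using that $J_g$ times a permutation matrix is $J_g$; the computation for $C_{k'}C_0^\top$ is the mirror image. Hence every block is $J_g$ and the product is $J_{g^2}$. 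I do not anticipate any real obstacle — the whole argument is routine once one notes that multiplying any $C_{k'}$ by $J_g$ on either side within a block flattens that block to $J_g$; the only thing to keep straight is the bookkeeping of transposes and block indices in the $C_0$--$C_{k'}$ products.
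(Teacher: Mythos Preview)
Your proposal is correct and follows exactly the approach indicated by the paper: part (i) is obtained from Lemma~\ref{lem:gh1}(i) with $\lambda=1$ by adding the extra summand $C_0=I_g\otimes J_g$, while for parts (ii) and (iii) the cases $k,k'\ge 1$ are Lemma~\ref{lem:gh1}(ii),(iii) with $\lambda=1$ and the remaining cases involving $C_0$ are handled by the trivial block computations you describe. The paper merely asserts that (ii) and (iii) are ``trivial''; your write-up just makes that triviality explicit.
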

Combining Theorem~\ref{thm:lgddconst} and Lemma~\ref{lem:gh11}, we obtain the following construction.  
\begin{theorem}\label{thm:gh2}
If there exist a generalized Hadamard matrix $GH(g,1)$ over an abelian group and linked UFS Latin squares $L_{i,j}$ {\upshape(}$i,j\in\{1,\ldots,f\},i\neq j${\upshape)} of order $g+1$,  
then there exists a linked system of symmetric group divisible designs with parameters $(g^2(g+1),g(g+1),g+1,g^2,g,g+1)$ with $(\sigma,\tau)=(2g,g)$.  
\end{theorem}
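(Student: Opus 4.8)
The plan is to recognize the statement as a direct application of Theorem~\ref{thm:lgddconst}(ii) to the family of $(0,1)$-matrices furnished by Lemma~\ref{lem:gh11}. First I would take the $g+1$ matrices $C_0,C_1,\ldots,C_g$ of order $g^2$ associated to the given $GH(g,1)$ and relabel them as $C_1,\ldots,C_{g+1}$ to match the indexing of Theorem~\ref{thm:lgddconst}. Reading off Lemma~\ref{lem:gh11}, these satisfy conditions (i)--(iii) of that theorem with $l=g+1$, with base order $mn=g^2$, and with $\alpha=g$ (from (ii)) and $\beta=1$ (from (iii)).

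The key parameter identification is to rewrite the sum in Lemma~\ref{lem:gh11}(i): since $\sum_{k=0}^g C_k=g I_{g^2}+J_{g^2}=(k-\lambda)I_{g^2}+\lambda J_{g^2}$ with $k=g+1$ and $\lambda=1$, the base matrices realize condition (i) of Theorem~\ref{thm:lgddconst} with $k=g+1$ and $\lambda_1=\lambda_2=1$ (the underlying partition being immaterial here precisely because the two $\lambda$'s coincide). I would then verify the dichotomy hypothesis of Theorem~\ref{thm:lgddconst}: here $\alpha\lambda_1=\alpha\lambda_2=g$ whereas $\beta l=g+1$, so $\alpha\lambda_1=\alpha\lambda_2\neq\beta l$, placing us in the second case, for which $(m^*,n^*)=(l,mn)=(g+1,g^2)$.

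With the given linked UFS Latin squares $L_{i,j}$ of order $g+1$ on the symbol set $\{1,\ldots,g+1\}$, Theorem~\ref{thm:lgddconst}(ii) then produces a linked system of symmetric group divisible designs with parameters $(lmn,\alpha k,m^*,n^*,\alpha\lambda_1,l\beta)$ and $(\sigma,\tau)=(\alpha+(l-1)\beta,(l-1)\beta)$. Substituting $l=g+1$, $mn=g^2$, $\alpha=g$, $\beta=1$, $k=g+1$, $\lambda_1=1$ gives exactly $(g^2(g+1),g(g+1),g+1,g^2,g,g+1)$ together with $(\sigma,\tau)=(2g,g)$, as claimed.

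Since every ingredient is already available, there is no genuinely hard step; the only point requiring care is the parameter bookkeeping. In particular, I must confirm that we fall into the case $\alpha\lambda_1=\alpha\lambda_2\neq\beta l$, so that the fibers have size $mn=g^2$ and the number of classes is $l=g+1$, rather than the alternative $(m^*,n^*)=(lm,n)$. I would also check that the hypothesis "linked UFS Latin squares of order $g+1$" coincides with the "linked UFS Latin squares on $\{1,\ldots,l\}$" required by Theorem~\ref{thm:lgddconst}(ii), which it does since $l=g+1$.
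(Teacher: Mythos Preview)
Your proposal is correct and is precisely the argument the paper intends: the paper's own proof consists only of the sentence ``Combining Theorem~\ref{thm:lgddconst} and Lemma~\ref{lem:gh11}, we obtain the following construction,'' and you have carried out exactly that combination with the correct parameter bookkeeping (in particular, correctly identifying the case $\alpha\lambda_1=\alpha\lambda_2\neq\beta l$ so that $(m^*,n^*)=(g+1,g^2)$).
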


Finally we use two generalized Hadamard matrices to obtain more examples of $(0,1)$-matrices. 
\begin{lemma}
Let $H,K$ be generalized Hadamard matrices $GH(g,g\lambda)$, $GH(g,\lambda)$, respectively. 
Then $C_{H,i}$ {\upshape(}$i\in\{1,\ldots,g^2\lambda\}${\upshape)} and $C_{K,i}\otimes J_g$ {\upshape(}$i\in\{1,\ldots,g\lambda\}${\upshape)} satisfy 
\begin{enumerate}
\item $\sum_{k=1}^{g^2\lambda}C_{H,k}+\sum_{k=1}^{g\lambda}C_{K,k}\otimes J_g=g(g+1)\lambda I_{g^3 \lambda}+g\lambda(I_{g\lambda}\otimes J_{g^2}-I_{g^3\lambda})+(g+1)\lambda(J_{g^3\lambda}-I_{g\lambda}\otimes J_{g^2})$, 
\item for any $C\in\{C_{H,1},\ldots,C_{H,g^2\lambda},C_{K,1}\otimes J_g,\ldots,C_{K,g\lambda}\otimes J_g\}$, $CC^\top=g^2\lambda C$,  
\item for any distinct $C,C'\in\{C_{H,1},\ldots,C_{H,g^2\lambda},C_{K,1}\otimes J_g,\ldots,C_{K,g\lambda}\otimes J_g\}$, $CC'^\top=g\lambda J_{g^3\lambda}$. 
\end{enumerate} 
\end{lemma}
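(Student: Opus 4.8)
The plan is to derive all three statements from Lemma~\ref{lem:gh1}, applied separately to $H$ in its role as a $GH(g,g\lambda)$ (so that its parameter $g\lambda$ plays the role of the lemma's ``$\lambda$'') and to $K$ as a $GH(g,\lambda)$, together with the elementary Kronecker identities $J_gJ_g=gJ_g$, $J_g\otimes J_g=J_{g^2}$ and $J_{g\lambda}\otimes J_{g^2}=J_{g^3\lambda}$. For the self-products in (ii), the matrices $C_{H,k}$ are handled directly by Lemma~\ref{lem:gh1}(ii) with parameter $g\lambda$, giving $C_{H,k}C_{H,k}^\top=g^2\lambda\,C_{H,k}$; for $C_{K,m}\otimes J_g$ I would use the mixed-product rule $(C_{K,m}\otimes J_g)(C_{K,m}\otimes J_g)^\top=(C_{K,m}C_{K,m}^\top)\otimes(J_gJ_g)$ and substitute $C_{K,m}C_{K,m}^\top=g\lambda C_{K,m}$ together with $J_gJ_g=gJ_g$ to get $g^2\lambda(C_{K,m}\otimes J_g)$.

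For (i), I would add the two sums supplied by Lemma~\ref{lem:gh1}(i) for $H$ and $K$, rewrite $\sum_m(C_{K,m}\otimes J_g)=\bigl(\sum_m C_{K,m}\bigr)\otimes J_g$, expand using the Kronecker identities above, and collect terms. The decisive step is the cross cancellation $g\lambda(J_{g^2\lambda}-I_{g^2\lambda})\otimes J_g+g\lambda\,I_{g^2\lambda}\otimes J_g=g\lambda J_{g^3\lambda}$, which, after reabsorbing $I_{g^2\lambda}\otimes I_g=I_{g^3\lambda}$ and $J_{g\lambda}\otimes J_{g^2}=J_{g^3\lambda}$, reproduces the stated right-hand side. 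The distinct-product identity (iii) then splits into three cases. Two of these are immediate: $C_{H,k}C_{H,k'}^\top=g\lambda J_{g^3\lambda}$ from Lemma~\ref{lem:gh1}(iii) for $H$, and $(C_{K,m}\otimes J_g)(C_{K,m'}\otimes J_g)^\top=(\lambda J_{g^2\lambda})\otimes(gJ_g)=g\lambda J_{g^3\lambda}$ from Lemma~\ref{lem:gh1}(iii) for $K$.

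The genuine content, and the step I expect to require the most care, is the mixed product $C_{H,k}(C_{K,m}\otimes J_g)^\top$. The key observation is that, although $C_{H,k}$ is naturally a $g^2\lambda\times g^2\lambda$ array of $g\times g$ permutation blocks while $C_{K,m}$ is only a $g\lambda\times g\lambda$ array, the matrix $C_{K,m}\otimes J_g$ again becomes a $g^2\lambda\times g^2\lambda$ array of $g\times g$ blocks, the $(p,q)$ block being $(C_{K,m})_{p,q}J_g$; hence the two factors conform block-wise. In the $(i,q)$ block of the product, every summand $\phi(-h_{ki}+h_{kp})(C_{K,m})_{p,q}J_g$ collapses to $(C_{K,m})_{p,q}J_g$ because a permutation matrix $P$ satisfies $PJ_g=J_g$; summing over $p$ leaves $\bigl(\sum_p(C_{K,m})_{p,q}\bigr)J_g$, and since every column of $C_{K,m}$ has sum $g\lambda$ (one $1$ per block-row across the $g\lambda$ block-rows of permutation matrices), each block equals $g\lambda J_g$, so $C_{H,k}(C_{K,m}\otimes J_g)^\top=g\lambda J_{g^3\lambda}$. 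The main obstacle is therefore purely organizational: correctly matching the two differing natural block structures so that the factors conform, and tracking the column sum of $C_{K,m}$; once that bookkeeping is in place, the collapse via $PJ_g=J_g$ makes the cross term fall out at once, and together with the two easy cases it completes (iii).
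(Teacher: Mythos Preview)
Your proposal is correct and follows essentially the same strategy as the paper: parts (i), (ii) and the two ``same-type'' cases of (iii) are reduced to Lemma~\ref{lem:gh1} together with elementary Kronecker identities, and the mixed product $C_{H,k}(C_{K,m}\otimes J_g)^\top$ is handled separately by exploiting that $J_g$ absorbs permutation matrices. The only stylistic difference is the block granularity chosen for that cross term: the paper views both factors through the coarser $g\lambda\times g\lambda$ block structure with $g^2\times g^2$ blocks (noting that each block of $C_{H,k}$ has constant row and column sum $g$, while each block of $C_{K,m}\otimes J_g$ is $P\otimes J_g$), whereas you stay at the finer $g^2\lambda\times g^2\lambda$ level with $g\times g$ blocks and invoke $PJ_g=J_g$ directly. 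Your version is more explicit and leaves no computational gap; the paper's is terser. One minor slip: after transposing, the scalar that appears is $(C_{K,m})_{q,p}$ rather than $(C_{K,m})_{p,q}$, so it is the $q$-th \emph{row} sum of $C_{K,m}$ that enters---but since $C_{K,m}$ has constant row and column sums equal to $g\lambda$ (indeed $C_{K,m}$ is symmetric), the conclusion is unaffected.
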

\begin{proof}
(i), (ii) and a part in (iii) for the product $C_{H,i}$ and $C_{K,j}$ follow from Lemma~\ref{lem:gh1}. 

For the rest part in (iii), let $C\in\{C_{H,1},\ldots,C_{H,g^2\lambda}\}$ and $C'\in\{C_{K,1}\otimes J_g,\ldots,C_{K,g\lambda}\otimes J_g\}$. 
Then $C$ has a $g\lambda\times g\lambda$ block structure whose block size equals to $g^2\times g^2$ such that each $g^2\times g^2$ block matrix has the constant row sum and column sum equal to $g$. 
The matrix $C'$ has also same block structure as $C$ whose block is a tensor product of a permutation of order $g$ and $J_g$. 
Thus we obtain $C C'^\top=C' C^\top=g\lambda J_{g^3 \lambda}$.  
\end{proof} 

\begin{theorem}\label{thm:gh3}
If there exist generalized Hadamard matrices $GH(g,g\lambda)$ and $GH(g,\lambda)$ over abelian groups and linked UFS Latin squares $L_{i,j}$ {\upshape(}$i,j\in\{1,\ldots,f\},i\neq j${\upshape)} of order $g(g+1)\lambda$,  
then there exists a linked system of symmetric group divisible designs with parameters $(g^4(g+1)\lambda^2,g^3(g+1)\lambda^2,g^2(g+1)\lambda^2,g^2,g^3\lambda,g^2(g+1)\lambda)$ with $(\sigma,\tau)=(g(2g+1)\lambda-1,g(g+1)\lambda-1)$.  
\end{theorem}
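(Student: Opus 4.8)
The plan is to recognize this statement as a direct instance of the construction in Theorem~\ref{thm:lgddconst}(ii), fed by the system of $(0,1)$-matrices furnished by the immediately preceding lemma. Concretely, I would take the list $C_1,\dots,C_l$ to be the $l=g(g+1)\lambda$ matrices $C_{H,1},\dots,C_{H,g^2\lambda},\,C_{K,1}\otimes J_g,\dots,C_{K,g\lambda}\otimes J_g$, each of order $g^3\lambda$. The three parts of the preceding lemma are precisely the three hypotheses (i)--(iii) of Theorem~\ref{thm:lgddconst}: part (i) identifies the sum in the form $k I_{mn}+\lambda_1(I_m\otimes J_n-I_{mn})+\lambda_2(J_{mn}-I_m\otimes J_n)$ with $m=g\lambda$, $n=g^2$ (so $mn=g^3\lambda$), $k=g(g+1)\lambda$, $\lambda_1=g\lambda$, $\lambda_2=(g+1)\lambda$; part (ii) gives $CC^\top=\alpha C$ with $\alpha=g^2\lambda$; and part (iii) gives $CC'^\top=\beta J_{g^3\lambda}$ with $\beta=g\lambda$. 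Thus no new structural computation is required, since the lemma already carries all of the design-theoretic content.

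Before invoking the construction I would verify the dichotomy hypothesis of Theorem~\ref{thm:lgddconst}. Here $\alpha\lambda_1=g^3\lambda^2$, while $\alpha\lambda_2=g^2(g+1)\lambda^2=\beta l$, so we land in the branch $\alpha\lambda_1\neq\alpha\lambda_2=\beta l$, which forces $(m^*,n^*)=(lm,n)=(g^2(g+1)\lambda^2,\,g^2)$. Then, given linked UFS Latin squares $L_{i,j}$ on the symbol set $\{1,\dots,l\}=\{1,\dots,g(g+1)\lambda\}$, I would apply Theorem~\ref{thm:lgddconst}(ii) verbatim: replacing each symbol $k$ throughout the $L_{i,j}$ by the matrix $C_k$ yields $(0,1)$-matrices $A_{i,j}$ forming a linked system, with (L1) inherited from part (i) of the construction and (L2) furnished by the linked UFS property through the claim established inside the proof of that theorem, giving $\sigma=\alpha+(l-1)\beta$ and $\tau=(l-1)\beta$.

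Finally I would read off the stated parameters purely by substituting $\alpha=g^2\lambda$, $\beta=g\lambda$, $l=g(g+1)\lambda$, and the values $k,m,n,\lambda_1,\lambda_2$ above into the output formulas $(lmn,\alpha k,lm,n,\alpha\lambda_1,l\beta)$ and $(\sigma,\tau)=(\alpha+(l-1)\beta,(l-1)\beta)$ of Theorem~\ref{thm:lgddconst}. Because all the real content is packaged in the preceding lemma and in Theorem~\ref{thm:lgddconst}, there is no genuine conceptual obstacle; the only delicate points are bookkeeping ones -- confirming that the ambient order is $g^3\lambda$, that the number of matrices is exactly $l=g(g+1)\lambda$ so that the linked UFS Latin squares live on a symbol set of the correct size, and that the dichotomy falls in the branch giving $m^*=lm$. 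The step I would scrutinize most carefully is this parameter arithmetic, since the two generalized Hadamard matrices carry different index values ($g\lambda$ and $\lambda$) and it is easy to misplace a power of $\lambda$ when substituting into the formulas for $\lambda_1,\lambda_2,\sigma,\tau$.
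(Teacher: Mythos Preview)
Your approach is exactly the one the paper intends: Theorem~\ref{thm:gh3} is stated without an explicit proof precisely because it is meant to be read off by plugging the preceding lemma (on the $g^2\lambda+g\lambda=g(g+1)\lambda$ matrices $C_{H,i}$ and $C_{K,i}\otimes J_g$) into Theorem~\ref{thm:lgddconst}(ii), just as Theorem~\ref{thm:gh2} follows from Lemma~\ref{lem:gh11}. Your identification of $m=g\lambda$, $n=g^2$, $k=g(g+1)\lambda$, $\lambda_1=g\lambda$, $\lambda_2=(g+1)\lambda$, $\alpha=g^2\lambda$, $\beta=g\lambda$, and your verification that $\alpha\lambda_1\neq\alpha\lambda_2=\beta l$ so that $(m^*,n^*)=(lm,n)$, are all correct.

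One caution on the final bookkeeping step, which you rightly flag as delicate: when you actually carry out the substitution you will find $\lambda_1^*=\alpha\lambda_1=g^3\lambda^2$, $\lambda_2^*=l\beta=g^2(g+1)\lambda^2$, and $\tau=(l-1)\beta=g\lambda\bigl(g(g+1)\lambda-1\bigr)$, $\sigma=\tau+g^2\lambda$, rather than the printed $g^3\lambda$, $g^2(g+1)\lambda$, $g(g+1)\lambda-1$, $g(2g+1)\lambda-1$. The printed values are typos (they drop a factor of $\lambda$ or of $g\lambda$); this is confirmed by the entry $(192,96,48,4,32,48)$ in Table~\ref{Tab:Par} with $g=2$, $\lambda=2$, which matches $g^3\lambda^2=32$ and $g^2(g+1)\lambda^2=48$, not the formulas in the statement. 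So do not be alarmed when your arithmetic fails to reproduce the displayed parameters verbatim.
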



\begin{table}[thb]
\caption{Feasible parameters with $v< 200$ and $\sigma-\tau>0$ other than $(k,n)=(m,2)$}
\label{Tab:Par}
\begin{center}
{\footnotesize
\begin{tabular}{c|c|c|c|c|c}
\noalign{\hrule height0.8pt}
$(v,k,m,n,\lambda_1,\lambda_2)$ & \multicolumn{2}{|c|}{Existence} &  \multicolumn{2}{|c|}{ Bound on $f$}  & Reference \\
\cline{2-5}
 & a SGDD & a linked SGDD  & Lower & Upper &  \\
\hline  
(12,6,3,4,2,3)& Yes  &Yes  & $3$ & $4$ & Section~\ref{sec:biangular} \\
\hline  
(27,9,9,3,0,3)& Yes  & Yes & $3$ & $6$ & Theorem~\ref{thm:gh1} \\
\hline  
(27,18,9,3,9,12)& Yes  & ? & $2$ & $3$ &  \\
\hline  
(36,12,4,9,3,4)& Yes & Yes & $3$ & $15$ &  Theorem~\ref{thm:gh2}, Proposition~\ref{prop:mslsff} \\
\hline  
(36,24,4,9,15,16)& Yes  & ? & $2$ & $7$ &  \\
\hline  
(48,24,12,4,8,12)& Yes  & ? & $2$ & $14$ & Theorem~\ref{thm:gh3} \\
\hline  
(56,28,7,8,12,14)& Yes  &Yes  & $7$ & $22$ & Section~\ref{sec:biangular} \\
\hline  
(64,16,16,4,0,4)& Yes  &Yes  & $4$  & $18$ & Theorem~\ref{thm:gh1} \\
\hline  
(64,48,16,4,32,36)& Yes & ? & $2$ & $4$ & \\
\hline  
(80,20,5,16,4,5)& Yes & Yes & $3$ & $36$ & Theorem~\ref{thm:gh2} \\
\hline  
(80,60,5,16,44,45)& Yes  & ? & $2$ & $7$ &  \\
\hline  
(100,40,4,25,15,16)& ? & ? & ? & $47$ & \\
\hline  
(100,60,4,25,35,36)& ? & ? & ?  & $23$ & \\
\hline  
(108,36,36,3,0,12)&  Yes  &Yes  & $6$ & $24$ & Theorem~\ref{thm:gh1} \\
\hline  
(108,54,27,4,18,27)& ? & ? & ? & $31$ & \\
\hline  
(108,72,36,3,36,48)& Yes  & ? & $2$ & $6$ & \\
\hline  
(125,25,25,5,0,5)& Yes  &Yes  & $5$ & $40$ & Theorem~\ref{thm:gh1} \\
\hline  
(125,100,25,5,75,80)& Yes  & ? & $2$ & $5$ & \\
\hline  
(132,66,11,12,30,33)& Yes  &Yes  & $5$ & $56$ & Section~\ref{sec:biangular} \\
\hline  
(144,48,16,9,12,16)& ? & ? & ? & $57$ & \\
\hline  
(144,96,16,9,60,64)& ? & ? & ? & $14$ & \\
\hline  
(150,30,6,25,5,6)& Yes & Yes  & $4$ & $70$ & Theorem~\ref{thm:gh2} \\
\hline  
(150,120,6,25,95,96)& Yes & ?  & $2$ & $7$ &  \\
\hline  
(192,96,48,4,32,48)& Yes  & ? & $2$ & $54$ & Theorem~\ref{thm:gh3} \\
\hline  
(196,84,4,49,35,36)& ? & ? & ? & $95$ & \\
\hline  
(196,112,4,49,63,64)& ? & ? & ?  & $47$ & \\
\noalign{\hrule height0.8pt}
\end{tabular}
}
\end{center}
\end{table}
\pagebreak

\begin{table}[thb]
\caption{Parameters $(v,k,m,n,\lambda_1,\lambda_2)$ with $2k\leq v\leq 50$ such that $\sigma$ or $\tau$ is not integeral}
\label{Tab:Par2}
\begin{center}
{\footnotesize
\begin{tabular}{c|c|c}
\noalign{\hrule height0.8pt}
$(v,k,m,n,\lambda_1,\lambda_2)$ & Existence &   Reference \\
\hline  
(4,2,2,2,0,1)& Yes  & Proposition~\ref{prop:gdd} \\
(9,3,3,3,0,1)& Yes  & Proposition~\ref{prop:gdd} \\
(16,4,4,4,0,1)& Yes  & Proposition~\ref{prop:gdd} \\
(16,8,8,2,0,4)&   ? & ? \\
(18,6,6,3,0,2)& Yes  & Proposition~\ref{prop:gdd} \\
(20,10,10,2,0,5)&  ? & ? \\
(24,12,6,4,4,6)&  ? & ? \\
(24,12,12,2,0,6)& ? & ? \\
(25,5,5,5,0,1)& Yes  & Proposition~\ref{prop:gdd} \\
(28,14,14,2,0,7)& ? & ? \\
(32,8,8,4,0,2)& ? & ? \\
(36,6,6,6,0,1)& ? & ? \\
(36,12,12,3,0,4)& ? & ? \\
(36,18,9,4,6,9)& ? & ? \\
(36,18,18,2,0,9)& ? & ? \\
(40,20,20,2,0,10)& ? & ? \\
(44,22,22,2,0,11)& ? & ? \\
(45,15,15,3,0,5)&   ? & ? \\
(48,12,12,4,0,3)&   ? & ? \\
(48,24,24,2,0,12)&  ? & ? \\
(49,7,7,7,0,1)& Yes  & Proposition~\ref{prop:gdd} \\
(49,21,7,7,7,9)&   ? & ? \\
(50,10,10,5,0,2)&   ? & ? \\
(50,20,10,5,5,8)&   ? & ? \\
\noalign{\hrule height0.8pt}
\end{tabular}
}
\end{center}
\end{table}

%

\section*{Acknowledgement}
The authors are grateful to a referee for many suggestions and corrections which have improved the
presentation of the paper very much.
Hadi Kharaghani is supported by an NSERC Discovery Grant.  
Sho Suda is supported by JSPS KAKENHI Grant Number 15K21075. 
\pagebreak

\appendix
\def\thesection{Appendix \Alph{section}}

\def\thesection{A}
\section{Krein parameters}
\begin{align*}
B_1^*&=\left(\begin{smallmatrix}
0 & 1 & 0 & 0 & 0 \\
m(n-1) & q_{1,1}^1 & \frac{m(n-1)}{f} & q_{1,1}^3 & 0 \\
0 & m-1 & 0 & m-1 & 0 \\
0 & q_{1,3}^1 & \frac{(f-1)m(n-1)}{f} & q_{1,3}^3 & m(n-1) \\
0 & 0 & 0 & 1 & 0 
\end{smallmatrix}\right), \\
B_2^*&=\left(\begin{smallmatrix}
0 & 0 & 1 & 0 & 0 \\
0 & m-1 & 0 & m-1 & 0 \\
f(m-1) & 0 & f(m-2) & 0 & f(m-1) \\
0 & (f-1)(m-1) & 0 & (f-1)(m-1) & 0 \\
0 & 0 & f-1 & 0 & 0 
\end{smallmatrix}\right), \\
B_3^*&=\left(\begin{smallmatrix}
0 & 0 & 0 & 1 & 0 \\
0 & q_{3,1}^1 & \frac{(f-1)m(n-1)}{f} & q_{3,1}^3 & m(n-2) \\
0 & (f-1)(m-1) & 0 & (f-1)(m-1) & 0 \\
(f-1)m(n-1) & q_{3,3}^1 & \frac{(f-1)^2m(n-1)}{f} & q_{3,3}^3 & (f-2)m(n-1) \\
0 & f-1 & 0 & f-2 & 0 
\end{smallmatrix}\right), \\ 
B_4^*&=\left(\begin{smallmatrix}
0 & 0 & 0 & 0 & 1 \\
0 & 0 & 0 & 1 & 0 \\
0 & 0 & f-1 & 0 & 0 \\
0 & f-1 & 0 & f-2 & 0 \\
f-1 & 0 & 0 & 0 & f-2 
\end{smallmatrix}\right), 
\end{align*}
where 
\begin{align*}
q_{1,1}^1&=\frac{1}{f}\left(m(n-2)+(f-1)(mn-2k)\sqrt{\frac{m(n-1)}{k (m n-k)}}\right),  \displaybreak[0]\\ 
q_{1,3}^1&=q_{3,1}^1=\frac{f-1}{f}\left(m(n-2)-(mn-2k)\sqrt{\frac{m(n-1)}{k (m n-k)}}\right),  \displaybreak[0]\\
q_{1,1}^3&=\frac{1}{f}\left( m(n-2)-(m n-2k)\sqrt{\frac{m(n-1)}{k(m n-k)}} \right),  \displaybreak[0]\\
q_{1,3}^3&=q_{3,1}^3=\frac{1}{f}\left( (f-1)m(n-2)+(m n-2k)\sqrt{\frac{m(n-1)}{k(m n-k)}} \right),  \displaybreak[0]\\
q_{3,3}^1&=\frac{f-1}{f}\left((f-1)m(n-2)+(mn-2k)\sqrt{\frac{m(n-1)}{k (m n-k)}}\right),  \displaybreak[0]\\
q_{3,3}^3&=\frac{1}{f}\left((f-1)^2 m(n-2)-(mn-2k)\sqrt{\frac{m(n-1)}{k (m n-k)}}\right). \\
\end{align*}

\begin{thebibliography}{99}
\bibitem{BI}
E. Bannai and T. Ito, 
{\sl Algebraic Combinatorics I: Association Schemes},
{Benjamin/Cummings, Menlo Park, CA,} 1984.

\bibitem{B}
R. C. Bose, 
Symmetric group divisible designs with the dual property,
{\sl J.\ Stat.\ Plann. Inference} {\bf 1} (1977), 87--101.

\bibitem{C}
P. J. Cameron, 
On groups with several doubly-transitive permutation representations. 
{\sl Math.\ Z} {\bf 128} (1972), 1--14. 

\bibitem{D}
E. van Dam, 
Three-class association schemes, 
{\sl J. Algebraic Combin.} {\bf 10} (1999), 69--107.

\bibitem{DMM}
E. van Dam, W. Martin, M. Muzychuk, 
Uniformity in association schemes and coherent configurations: cometric Q-antipodal schemes and linked systems,
{\it J.\ Combin.\ Theory Ser.\ A} {\bf 120} (2013), 1401--1439. 


\bibitem{H}
D.G. Higman, Rank $5$ association schemes and triality, 
{\sl Linear Algebra Appl.} {\bf 226--228} (1995) 197--222.

\bibitem{HKO}
W. H. Holzmann, H. Kharaghani, W. Orrick, 
On the real unbiased Hadamard matrices.
Combinatorics and graphs, 243--250, Contemp. Math., 531, Amer. Math. Soc.,
Providence, RI, 2010.


\bibitem{HKS}
W. H. Holzmann, H. Kharaghani, S. Suda, 
Mutually unbiased biangular vectors and association schemes, 
In C. J.
Colbourn, editor, {\sl Algebraic Design Theory and Hadamard Matrices}, volume 133 of Springer Proceedings
in Mathematics Statistics, pages 149--157. Springer International Publishing, 2015.

\bibitem{K}
H. Kharaghani, 
New class of weighing matrices, 
{\sl Ars.\ Combin.} {\bf 19} (1985), 69-72. 

\bibitem{KSS}
H. Kharaghani, S. Sasani and S. Suda, 
Mutually unbiased Bush-type Hadamard matrices and association schemes, 
{\sl Elec.\ J.\ Combin.} {\bf 22} (2015), P3.10.


\bibitem{LMO}
N. LeCompte, W.J. Martin and W. Owens, 
On the equivalence between real mutually unbiased bases and a certain class of association schemes, 
{\sl Eur.\ J.\ Combin.} {\bf 31} (2010), 1499--1512.

\bibitem{MMW}
W. J. Martin, M. Muzychuk, J. Williford, 
Imprimitive cometric association schemes: constructions and analysis, 
{\sl J.\ Algebraic Combin.} {\bf 25} (2007), 399--415. 


\bibitem{M}
R. Mathon, 
The systems of linked $2$-$(16,6,2)$ designs, 
{\sl Ars Combin.} {\bf 11} (1981), 131--148. 


\end{thebibliography}
\end{document}